\newcommand{\m}{\mathfrak{m}}
\newcommand{\R}{\mathbb{R}}
\renewcommand{\P}{\mathbb P}
\newcommand{\PD}{{\P}D}
\newcommand{\T}{\mathcal T}
\newcommand{\C}{\mathcal C}
\renewcommand{\S}{\mathcal S}
\newcommand{\gl}{\mathfrak{gl}}
\newcommand{\spg}{\mathfrak{sp}}
\newcommand{\sll}{\mathfrak{sl}}
\newcommand{\Z}{\mathbb{Z}}
\newcommand{\gr}{\operatorname{gr}}
\newcommand{\sym}{\operatorname{sym}}
\newcommand{\Sym}{\operatorname{Sym}}
\newcommand{\g}{\mathfrak{g}}
\newcommand{\ag}{\mathfrak a}
\newcommand{\pg}{\mathfrak p}
\newcommand{\hg}{\mathfrak h}
\newcommand{\s}{\mathfrak{s}}
\newcommand{\V}{\mathcal V}
\newcommand{\N}{N}
\newcommand{\F}{\operatorname F}
\newcommand{\Hom}{\operatorname{Hom}}
\newcommand{\End}{\operatorname{End}}
\newcommand{\im}{\operatorname{im}}
\newcommand{\vf}{\varphi}
\newcommand{\mJ}{\mathcal J}
\newcommand{\PDD}{\PD^\perp\backslash\P(D^2)^\perp}
\newtheorem{thm}{~~~~Theorem}
\newtheorem{defin}{~~~~Definition}
\newtheorem{prop}{~~~~Proposition}
\newtheorem{cor}{~~~~Corollary}
\newtheorem{lem}{~~~~Lemma}
\newtheorem{stat}{~~~~Statement}
\theoremstyle{definition}
\theoremstyle{remark}
\newtheorem{rem}{~~~~Remark}
\begin{document}
\title{On local geometry of rank 3 distributions with 6-dimensional square
}

\author
{Boris Doubrov
\address{Belarussian State University, Nezavisimosti Ave.~4, Minsk 220030, Belarus;
 E-mail: doubrov@islc.org}
\and Igor Zelenko
\address{S.I.S.S.A., Via Beirut 2-4, 34014, Trieste, Italy;
E-mail: zelenko@sissa.it}} \subjclass[2000]{58A30, 53A55.}
\keywords{Nonholonomic distributions, equivalence problem, canonical
frames, abnormal extremals, curves of flags, filtered frame bundles,
Tanaka prolongation}

\begin{abstract}
We solve the equivalence problem for rank 3 completely nonholonomic
vector distributions with 6-dimensional square on a smooth manifold
of arbitrary dimension $n$ under very mild genericity conditions.
The main idea is to consider the projectivization of the annihilator
$D^\perp$ of a given 3-dimensional distribution $D$. It is naturally
foliated by characteristic curves, which are also called the
abnormal extremals of the distribution $D$. The dynamics of vertical
fibers along characteristic curves defines certain curves of flags
of isotropic and coisotropic subspaces in a linear symplectic space.
The problem of equivalence of distributions can be essentially
reduced to the differential geometry of such curves.

The class of all 3-distributions under consideration is split into a
finite number of subclasses according to the Young diagram of their
flags. The local geometry of distributions can be recovered from the
properties of the symmetry group of so-called flat curves of flags
associated with this Young diagram. In each subclass we describe
the flat distribution and construct a canonical frame for any other
distribution.

It turns out that for $n>6$ in the most nontrivial case the symmetry
algebra of the flat distribution can be described in terms of
rational normal curves (their secants and tangential developables)
in projective spaces and its dimension grows exponentially with
respect to $n$.
\end{abstract}

\maketitle\markboth{Boris Doubrov and Igor Zelenko} {On local geometry of rank
3 distributions with 6-dimensional square}

\section{Introduction}

This paper is a next step of the long-standing program of studying
the geometry of non-holonomic vector distributions using the ideas
of geometric control theory. In earlier
articles~\cite{doubzel1,doubzel2} we have solved the equivalence
problem for rank 2 vector distributions constructing a canonical
frame under very mild genericity conditions.

In this article we treat rank 3 vector distributions on smooth
manifolds of arbitrary dimension and solve the equivalence problem
constructing a canonical frame for each three-dimensional
distribution $D$ satisfying certain non-degeneracy conditions. The
first such condition is the assumption that the dimension of the
derived distribution $D^2=D+[D,D]$ is $6$. In the following we shall
refer to such distributions as $(3,6,\dots)$-distributions
indicating that the dimensions of $D$ itself and $D^2$ are $3$ and
$6$ respectively.

The case of $(3,6)$-distributions on $6$ dimensional manifolds was
considered by Robert Bryant~\cite{Bryant} using the Cartan
equivalence method. In particular, Bryant proves that there is a
natural parabolic geometry of type $B_3$ associated to each such
distribution. The aim of the current article is to construct similar
geometries associated with a given $(3,6,\dots)$-distribution in any
dimension. As in the case of rank 2 vector distributions the
structure groups we get in dimensions $7$ and higher are no longer
semisimple.

The obvious (but very rough in the most cases) discrete invariant of
a distribution $D$ at $q$ a so-called \emph{small growth vectors} at
$q$. It is the tuple
$\{\dim D^j(q)\}_{j\in{\mathbb N}}$, where $D^j$ is the $j$-th power
of the distribution $D$, i.e., $D^j=D^{j-1}+[D,D^{j-1}]$, $D^1=D$.
More generally, at each point $q\in M$ we can consider the graded
space $\frak{m}_q=\sum_{i\ge 1} D^{i+1}/D^i$. It can be naturally
equipped with a structure of a graded nilpotent Lie algebra and is
called a symbol of the distribution $D$ at a point $q$. The notion
of this symbol is extensively used in works of N.~Tanaka and his
school (see~\cite{tan}) who systematized and generalized the Cartan
equivalence method. However, these tools become really effective
only when the symbol algebras are isomorphic at different points,
and all constructions strongly depend on the algebraic structure of
the symbol. Note that the problem of classification of all symbols
(graded nilpotent Lie algebras) is quite nontrivial already in
dimension $7$ (see \cite{kuz}) and it looks completely hopeless for
arbitrary dimensions. For example, as was shown in \cite{kuz}
already in dimension $7$ the continuous parameters appears in
symbols of rank (3,6,\ldots )-distributions (see models
$m7\_3\_3(\alpha)$ and $m7\_3\_13(\alpha)$ there), and there are 6
more non-isomorphic symbols in addition to that.

The core idea of our approach comes from geometric control theory
and is based on construction of a characteristic line bundle
associated with a given $(3,6,\dots)$-distribution and the study of
curves of flags of (co)isotropic subspaces obtained by its
linearization. Our classification of rank 3 distributions is done
according to a so-called Young diagram of these curves of flags and
is not directly related to Tanaka symbols of the distribution $D$
itself. The local geometry of distributions can be recovered from
the properties of symmetry groups of so-called flat curves of flags
associated with a given Young diagram.

Below we outline the main constructions and the results of the
paper. They are given without proofs and repeated in more detail and
with proofs in the main part of the paper.

\subsection{Characteristic $1$-foliation of abnormal
extremals} \label{ss11} First we distinguish a characteristic
$1$-foliation (the foliation of abnormal extremals) on a special
odd-dimensional submanifold of the cotangent bundle associated with
any rank 3 distribution $D$. Define the $j$-th power of the
distribution $D$ as $D^1=D$ and $D^{i+1}=D^i+[D,D^i]$. We assume
that all $D^{j}$ are subbundles of the tangent bundle. Denote by
$(D^j)^{\perp}\subset T^*M$ the annihilator of the $j$-th power
$D^j$, namely
\[
(D^j)^{\perp}=\{(p,q)\in T_q^*M\mid p\cdot v=0\quad\forall\,v\in
D^j(q)\}.
\]
Let $\pi\colon T^*M\mapsto M$ be the canonical projection. For any
$\lambda\in T^*M$, $\lambda=(p,q)$, $q\in M$, $p\in T_q^*M$, let
$\varsigma(\lambda)(\cdot)=p(\pi_*\cdot)$ be the canonical Liouville
form and $\hat\sigma=d\varsigma$ be the standard symplectic
structure on $T^*M$.

The crucial notion in this paper is \emph{an abnormal extremal of a
distribution}. An \emph{unparametrized} curve in $D^\perp$ is called
\emph{abnormal extremal of a distribution $D$} if the tangent line
to it at almost every point belongs to the kernel of the restriction
$\hat\sigma|_{D^\perp}$ of $\hat\sigma$ to $D^\perp$ at this point.
The term ``abnormal extremals'' comes from Optimal Control Theory:
abnormal extremals of $D$ are exactly Pontryagin extremals with zero
Lagrange multiplier near the functional for any variational problem
with constrains, given by the distribution $D$.

Since $\dim D=3$, the submanifold $D^\perp$ has odd codimension in
$T^*M$, and the kernels of the restriction $\hat\sigma|_{D^\perp}$
are non-trivial. Moreover, as we show below (Lemma~\ref{foli25lem}),
for points in $D^\perp\backslash (D^2)^\perp$ these kernels are
one-dimensional. They form a \emph{characteristic line distribution}
in $D^\perp\backslash(D^2)^\perp$, which will be denoted by
$\widehat{\C}$. The line distribution $\widehat{\mathcal C}$
defines in turn a \emph{characteristic 1-foliation} on
$D^\perp\backslash(D^2)^\perp$. The leaves of this foliation are
exactly the abnormal extremals of the distribution $D$ lying in the
complement to $(D^2)^\perp$.

It is more natural to work on the projectivization of the contangent
bundle instead of the tangent bundle itself. We define the same
objects on the projectivization of $D^\perp\backslash(D^2)^\perp$.
As homotheties of the fibers of $D^\perp$ preserve the
characteristic line distribution, the projectivization induces the
\emph{characteristic line distribution} on $\PDD$, which will be
denoted by $\C$. It defines \emph{the characteristic $1$-foliation},
and its leaves are called the \emph{abnormal extremals of the
distribution $D$} on $\PDD$.

The distribution $\C$ can be defined equivalently in the following
way. Take the corank $1$ distribution on
$D^\perp\backslash(D^2)^\perp$, given by the Pfaffian equation
$\varsigma|_{D^\perp}=0$ and push it forward it under
projectivization to $\PD^\perp$. In this way we obtain a corank 1
distribution on $\PD^\perp$, which will be denoted by
$\widetilde\Delta$. The distribution $\widetilde\Delta$ defines a
quasi-contact structure on the even dimensional manifold $\PDD$ and
$\C$ is exactly the characteristic distribution of this
quasi-contact structure. Moreover, the symplectic form $\hat\sigma$
induces the antisymmetric form on each subspace of a distribution
$\widetilde\Delta$, defined up to a multiplication by a constant.
This antisymmetric form will be denoted by $\tilde\sigma$.

\subsection{Lifting of the distribution to the cotangent bundle}
We can consider the characteristic distribution $\C$ as a dynamical
system naturally associated with any $(3,6,\dots)$-distribution and
study the dynamics of fibers of the natural projection
$\pi\colon\PDD\to M$.

In more detail, let $V$ be a vertical distribution defined as a set
of tangent spaces to the fibers of the projection $\pi$. Then,
clearly, $V$ is complimentary to $\C$, and we arrive at a so-called
\emph{pseudo-product structure}, which consists of a pair of 2
completely integrable distributions $(\C,V)$, whose sum is
non-integrable. Indeed, the direct computation shows that the
pull-back $\pi^*D$ of $D$ itself is easily recovered from $\C$ and
$V$. Namely, we have $\pi^*D = V+\C+[\C, V]$. In particular, this
proves that the distribution $\C\oplus V$ is bracket-generating.

In this paper we require a stronger non-degeneracy condition on the
distribution $D$. We construct a sequence of distributions starting
from $\mJ^{(-1)}=\C\oplus V$ by taking the brackets only with the
sections of characteristic bundle:
\begin{align*}
\mJ^{(-1)} &= \C\oplus V; \\
\mJ^{(0)} &= \mJ^{(-1)} + [\C, \mJ^{(-1)}] \quad (=\pi^*D);\\
\mJ^{(i+1)} &= \mJ^{(i)} + [\C, \mJ^{(i)}],\quad i\ge 0.
\end{align*}
As $\mJ^{(-1)}$ lies in the restriction of the quasi-contact bundle
$\widetilde\Delta$ to $\PDD$ and $\C$ is the characteristic bundle
of this restriction, it is clear that all distributions $\mJ^{(i)}$
will be subbundles of the contact bundle. We say that the
distribution $D$ is \emph{of maximal class} if
$\mJ^{(m)}=\widetilde\Delta$ for sufficiently large $m$.

In this paper we consider only $(3,6,\dots)$-distributions of
maximal class. We note that we are not aware of any examples of
$(3,6,\dots)$-distributions that are not of maximal class.

We can prolong the sequence of subbundles $\mJ^{(i)}$ to the
negative side defining:
\[
 \mJ^{(i-1)} = \{ X\in \mJ^{(i)} \mid [X, \C] \subset \mJ^{(i)} \}.
\]
It appears that the subspace $\mJ^{(-i-1)}$ is exactly the
skew-orthogonal complement to $\mJ^{(i)}$ with respect to the form
$\tilde\sigma$ for any $i\ge 0$. In other words, we have:
\[
\mJ^{(-i-1)} = \{ X \in \widetilde\Delta \mid
\tilde\sigma(X,\mJ^{(i)})=0\}.
\]

Thus, we immediately see that the sequence $\mJ^{(-i)}$ descends to
the characteristic bundle $\C$ for sufficiently small $i$. In
addition, using the fact that $\dim \mJ^{(0)} - \dim\mJ^{(-1)} = 2$,
we prove that $\dim \mJ^{(i)} - \dim\mJ^{(i-1)} \le 2$ for any
$i\in\Z$.

Thus, at any generic point $\lambda\in\PDD$ we have a flag of
subbundles:
\[
0\subset \C = \mJ^{(-m-1)} \subset \mJ^{(-m)}\subset \dots
\subset\mJ^{(-1)}=\C+V\subset\mJ^{(0)}=\pi^*D\subset
\mJ^{(1)}\subset\dots\subset\mJ^{(m)}=\widetilde\Delta,
\]
where the dimension gap between two neighbors in this sequence is
either $2$ or $1$.

\subsection{Linearization of the flag along characteristic foliation}
We \emph{linearize} this sequence along the characteristic foliation
and turn it into the curve in an appropriate flag manifold of a
symplectic space at each point $\lambda_0\in \PDD$.

Namely, let $\gamma$ be a leaf of the characteristic  $1$-foliation
$\C$ containing $\lambda_0$ and let $N$ be a manifold of all leaves
of $\C$ in a small neighborhood of $\lambda_0$. Then $\gamma$
represents a point of $N$, and we have a natural projection
$\Phi\colon \PDD\to N$ defined in a neighborhood of $\lambda_0$. Let
$\Delta\subset T_\gamma N$ be the image of $\widetilde\Delta$ under
this projection. As it is a quotient of a codimension 1 distribution
by its characteristic, it inherits a symplectic structure $\sigma$
defined up to a multiplication by a non-zero scalar.

The differential $\Phi_*$ takes $\C_{\lambda}$ to $0$ and the flag
of spaces $\{\mJ^{(i)}\}$ at a point $\lambda$ to a certain flag of
subspaces in $\Delta$:
\[
\lambda \mapsto \left\{ 0=J^{(-m-1)}\subset J^{(-m)}\subset \dots
\subset J^{(m-1)}\subset J^{(m)}=\Delta\right
\},\quad\lambda\in\gamma,
\]
where $J^{(i)}=\Phi_*(\mJ^{(i)})$ for all $i\in\Z$.

Thus, we get a natural map from $\gamma$ to the variety of all flags
in the symplectic space $\Delta$. We call this curve \emph{the
linearization} of the flag $\{\mJ^{(i)}\}$ along the characteristic
curve $\gamma$. As it is defined in a natural way, any invariants of
this curve will be automatically the invariants of the distribution
$D$ itself. One of the main points here is that the local geometry
of rank 3 distribution can be reconstructed from the geometry of
such curves of flag. Hence the core part of the paper is devoted to
the study of the geometry of these curves.

\subsection{Geometry of curves of flags of (co)isotropic subspaces}
\indent

\textbf{(a) Young diagrams.} To any curve of flags $\{J^{(i)}\}$
above one can construct a Young diagram: the number of boxes in
$i$-th row of it is equal to $\dim J^{(i)} - \dim J^{(i-1)}$. As the
negative part of the flag is a skew-symmetric complement to the
non-negative part, this diagram completely determines the dimensions
of all quotient spaces $J^{(i)}/J^{(i-1)}$ for any $i\in\Z$.

By construction there are no more than 2 boxes in each column. So,
the diagram has the form:
\begin{center}
\includegraphics{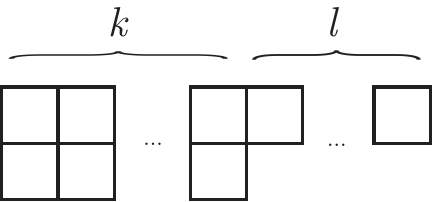}
\end{center}
and is completely determined by a pair of integers $(k,l)$. This
diagram is said to be of type~$(k,l)$. It is easy to get that $k$
and $l$ have to satisfy the relation $n=2k+l+2$, where  $n$ is the
dimension of the base manifold $M$. In particular, the parity of $l$
and $n$ should coincide. Besides, the assumption $\dim D^2=6$
implies that the number of columns with 2 boxes is not less than
$2$. The case $l=0$ corresponds to \emph{rectangular} Young
diagrams, while $l>0$ corresponds to \emph{non-rectangular} Young
diagrams.
%
We say that a $(3,6,\dots)$-distributions of maximal class is of the
type $(k,l)$ if germs of linearization of the flag $\{\mJ^{(i)}\}$
at generic points of $\PDD$ (along the corresponding characteristic
curve) have type $(k,l)$.

\textbf{(b) Flat curves associated with Young diagrams.} All curves
of flags with given Young diagram can be treated uniformly as a
deformation of the so-called \emph {flat curve} having the biggest
group of symmetries. To describe the flat curve let $V$ be a vector
space of the same dimension as $\Delta(\gamma)$ (equal to
$2n-6=4k+l-2$) endowed with a one-parametric family of symplectic
forms such that any form from this family is obtained from any other
by a multiplication on a nonzero constant. Assume also that $V$ is
endowed with a filtration
\begin{equation}
\label{Vt} V=V^{(k+l-1)}\supset\ldots \supset  V^{(-k-l+1)}\supset
V^{(-k-l)}=0
\end{equation}
such that $\dim V^{(i)}=\dim J^{(i)}$, $V^{(i)}$ are isotropic for
$i<0$ and $V^{(1-i)}$ is the skew symmetric complement of $V^{(i)}$
for any $i$. Finally assume that $V$ is endowed with a distinguished
basis $(e_1,\dots,e_{2k+l-1},f_1,\dots,f_{2k+l-1})$ such that
\begin{enumerate}
\item this  basis is symplectic w.r.t. to one of the form $\sigma$ from the one-parametric
family of symplectic forms on $V$, i.e.
$\sigma(e_i,e_j)=\sigma(f_i,f_j)=0$, for any $i,j$, $
\sigma(e_i,f_{2k+l-i})=(-1)^i$, and $\sigma(e_i,f_j) =0$ for any
$i,j$ such that  $i+j\ne 2k+l$;

\item
the filtration \eqref{Vt} coincides with
\begin{multline}\label{flag}
0\subset\langle e_1 \rangle \subset \langle e_1, e_2 \rangle \subset
\dots \subset
\langle e_1,\dots, e_l \rangle \\
\subset \langle e_1,\dots, e_{l+1}, f_1 \rangle \subset \langle
e_1,\dots, e_{l+2}, f_1, f_2 \rangle \subset \dots \subset \langle
e_1,\dots, e_{2k+l-1}, f_1\dots, f_{2k-1} \rangle \\
\subset \langle e_1,\dots, e_{2k+l-1}, f_1,\dots, f_{2k} \rangle
\subset \dots \subset \langle e_1,\dots, e_{2k+l-1}, f_1,\dots,
f_{2k+l-1} \rangle =  V.
\end{multline}
\end{enumerate}
Now  define a linear maps $X \in \End(V)$ as follows:
$$Xe_i = e_{i+1}, \quad Xf_i = f_{i+1} \text { for } i=1,\dots
2k+l-2,\quad Xe_{2k+l-1}=Xf_{2k+l-1}=0.$$
 We say that
the curve of flags $\mathfrak F_{k,l}=\{\mathfrak
F_{k,l}^{(i)}\}_{i=-k-l}^{k+l-1}$ is a \emph{flat curve associated
with the Young diagram of type $(k,l)$}, if it is an orbit of the
flag \eqref{flag} under the action of the one-parameter subgroup
$\exp(t X)$. A symplectic moving frame $(\tilde
e_1(\cdot),\ldots,\tilde e_{2k+l-1}(\cdot),\tilde
f_1(\cdot),\dots,\tilde f_{2k+l-1}(\cdot))$ is called \emph{normal
moving frame of the flat curve
$\mathfrak F_{k,l}$} , if
 $$\begin{array}{l}\mathfrak F_{k,l}^{(i)}(\cdot)=\langle\tilde e_1(\cdot),\ldots \tilde
 e_{i+k+l}(\cdot)\rangle, \quad
 i=-k-l,\ldots, -k\\
\mathfrak F_{k,l}^{(i)}(\cdot)=\langle\tilde e_1(\cdot),\ldots
\tilde
 e_{i+k+l}(\cdot), \tilde f_1(\cdot), \ldots, \tilde f_{i+k}(\cdot)\rangle, \quad
 i=-k+1,\ldots, k+l-1
 \end{array}
 $$
and for some parametrization $t$ of the curve
$$\tilde e_i'(t) = \tilde e_{i+1}(t), \tilde f_i'(t) =
\tilde f_{i+1}(t), \text { for } i=1,\dots 2k+l-2,\quad \tilde
e'_{2k+l-1}(t)=\tilde f'_{2k+l-1}(t)=0.$$ Note that the frame
$(\exp(t X)e_1,\ldots,\exp(t X)e_{2k+l-1}, \exp(t X)f_1,\ldots,
\exp(t X)f_{2k+l-1})$, where $e_i$ and $f_i$ are as above, is a
normal  moving frame of the flat curve $\{\mathfrak
F^{(i)}\}_{i=-k-l}^{k+l-1}$.

Let $\frak S_{k,l}$ be the group of all isomorphisms $A$ of $V$,
sending the flat curve to itself and preserving the one-parametric
family of symplectic forms on $V$. The latter means that for any
form $\sigma$ from this family there exists a nonzero constant $c$
such that
\begin{equation}
\label{csp} \sigma(Av_1, Av_2)=c\sigma(v_1,v_2),\quad  \forall v_1,
v_2 \in V .
\end{equation}
In other words, $\frak S_{k,l}$ is the group of symmetries of the
flat curve. Denote $\frak s_{k,l}$ the corresponding Lie algebras
(i.e. the Lie algebra of infinitesimal symmetries of the flat
curve).

\textbf{(c) Bundles of moving frames and their symbol.}
Let us clarify what do we mean by saying that any curve of flags
with given Young diagram is a deformation of the corresponding flat
curve. For any such curve  we construct a bundle of canonical moving
frames of dimension equal to the dimension of the group of
symmetries $\frak S_{k,l}$ of the flat curve. For the flat curve
this bundle of canonical moving frames coincides with all its normal
moving frames defined above. Take as before a manifold $N$ of all
leaves of the characteristic foliation $\C$ in a small neighborhood
of a point $\lambda_0\in\PDD$ such that the linearizations of the
flag $\{\mJ^{(i)}\}$ along any leaf of $\C$ in this neighborhood has
the Young diagram of type $(k,l)$. Take $\gamma\in N$ and collect
all frames on $\Delta(\gamma)$, obtained from all canonical moving
frames for the linearization of of the flag $\{\mJ^{(i)}\}$ along
$\gamma$. In this way we get  the canonical frame bundle $P$ on the
contact distribution $\Delta$ of the manifold $N$. By the frame
bundle on a distribution of a manifold we mean a fiber bundle over
this manifold with the fiber over a point consisting of some
distinguished frames of the distribution at this point. The frame
bundle we construct is not in general a principle fiber bundle, but
it still possesses a number of nice properties such as a
\emph{constant symbol}.

In more detail, we define a symbol of the frame bundle $P$ on
$\Delta$ as follows. Take the subspace $V$ with the distinguished
frame as in paragraph~(b). Then any frame $p$ on $\Delta$ can be
identified with the isomorphism between $V$ and $\Delta$, sending
the distinguished frame on $V$ to the frame $p$. Hence, the tangent
space to a fiber of $P$ at a point $p$ can be identified with a
subspace of $\gl(V)$. The filtration on $V$ induces a natural
filtration on $\gl(V)$ and, therefore, on any its subspace. The
corresponding graded subspace $\gr T_pP$ is called a \emph{symbol of
the bundle $P$ at a point $p$}. Symbols are subspaces of
$\gr\gl(V)$, which, in turn, is naturally identified with $\gl(\gr
V)$. In the case of rectangular diagram  already the tangent spaces
to a fiber of $P$ at different points are the same, as subspaces of
$\gl(V)$. On the other hand, in the case of nonrectangular diagram
this tangent spaces at different points are different subspaces of
$\gl(V)$. However, all symbols at different points of this bundle
are the same. Moreover, in both cases the symbol is equal to the
algebra of infinitesimal symmetries $\frak s_{k,l}$ of the flat
curve (under the natural identification of $V$ and $\gr V$ via the
distinguished basis on $V$).

\textbf{(d) Prolongation procedure.}  We emphasize that our  frame
bundles on the corank 1 distribution $\Delta$ are not even principle
bundles in general, but the additional filtration on the model space
$V$ for these bundles allows to define the notion of symbol of the
bundle at a point. It turns out that assuming that the symbol is
constant, it is possible to carry prolongation procedure for such
frame bundles in a similar way as in the classical theory of
$G$-structures on manifold and as in the Tanaka theory for
$G$-structures on filtered manifolds~\cite{tan}. Since originally we
have frames not on the whole tangent bundle but on a corank 1
distribution only, we have to modify the notion of the Spencer
operator and of the prolongation (of subspaces of $\gl(V)$ and of
frame bundles).
As in case of standard $G$-structures, if for some $i>0$ the
modified $i$-th prolongation of the symbol $\mathfrak s_{k,l}$ of
our frame bundle $P$ is trivial, then for any distribution of type
$(k,l)$ there exists a canonical frame on a certain bundle $Q$ over
$N$. This bundle is constructed as the $i$-th prolongation of the
frame bundle $P$.

The modified $i$-th prolongation $\mathfrak s_{k,l}^{(im)}$  of the
symbol $\mathfrak s_{k,l}$ has a simple description in terms of the
Tanaka prolongation of a certain graded Lie algebra. First let
$\g_{-1}=V$ and $\g_{-2}=\mathbb R\eta$ for some vector $\eta$.
Define the structure of a graded Lie algebra on the vector space
$\g_{-2}\oplus \g_{-1}$ by setting $[v_1,v_2]=\sigma(v_1,v_2)\eta$
for some form $\sigma$ from the one-parametric family of symplectic
forms on $V$. This Lie algebra is isomorphic to the Heisenberg
algebra. Now set $\g_0=\s_{k,l}$. Note that by construction
$\s_{k,l}$ is a subalgebra of the Lie algebra $\mathfrak {csp}(V)$
corresponding to a Lie group of all isomorphisms $A$ of $V$
satisfying \eqref{csp}. On the other hand, it is clear that
$\mathfrak{csp}(V)$ coincides with the algebra of all derivations of
$\g_{-2}\oplus \g_{-1}$ preserving the filtration. Therefore the
space $\g_{-2}\oplus \g_{-1}\oplus \g_0$ can be endowed with the
natural structure of a graded Lie algebra as well by setting $[A,
v]=Av$ for any $A\in \g_0$ and $v\in V$. Let
$$\mathfrak G_{k,l}=\bigoplus_{i\geq -2}\g_{i}=\mathbb R\eta\oplus V\oplus\s_{k,l} \oplus \bigoplus_{i\geq 1}\g_{i} $$
be the Tanaka universal prolongation (\cite{tan}) of the graded Lie
algebra
\begin{equation}
\label{Tan1} \g_{-2}\oplus \g_{-1}\oplus \g_0=\mathbb R\eta\oplus
V\oplus \s_{k,l} .
\end{equation}
It turns out that \emph{our modified $i$th prolongation $\mathfrak
s_{k,l}^{(im)}$ coincides with the Tanaka $i$th prolongation $\g_i$
of the algebra  $\g_{-2}\oplus \g_{-1}\oplus \g_0$ for any $i\geq
1$}.

The Lie algebra $\mathfrak G_{k,l}$ is finite dimensional for any
$(k,l)$. We will describe it explicitly in the next subsection.
Moreover, to any pair $(k,l)$ one can assign in a natural way a
$(3,6,\ldots)$-distribution of maximal class and type $(k,l)$ such
that its algebra of infinitesimal symmetries is equal to $\mathfrak
G_{k,l}$. For this let $\frak S^0_{k,l}$ be a subgroup of $\frak
S_{k,l}$, preserving the filtration \eqref{Vt}, i.e., one point of
the flat curve,  and $\frak s^0_{k,l}$ be the corresponding Lie
algebras. Consider the following subalgebra $\mathfrak p_{k,l}$ of
$\mathfrak G_{k,l}$
$$\mathfrak p_{k,l}=\mathbb R\eta
\oplus V^{(-1)}\oplus \frak s^0_{k,l}\oplus \bigoplus_{i\geq
1}\g_{i},$$ where $V^{(-1)}$ is as in the filtration \eqref{Vt}.

 Let $G_{k,l}$ be a Lie group with the Lie algebra
$\g$ and let $P_{k,l}$ be a subgroup corresponding to the subalgebra
$\mathfrak p_{k,l}$.  Then there is an invariant 3-dimensional
distribution on $G_{k,l}/P_{k,l}$ that corresponds to the
$P_{k,l}$-invariant subspace
$$
\mathbb R\eta\oplus V^{(0)}\oplus \frak s_{k,l}\oplus
\bigoplus_{i\geq 1}\g_{i}$$ in $\g_{k,l}/\mathfrak p_{k,l}$ (note
that $\dim\frak s_{k,l}-\dim\frak s^0_{k,l}=1$ and $\dim
V^{(0)}-\dim V^{(-1)}=2$). We call this distribution \emph{a flat
$(3,6,\dots)$-distributions of type (k,l)}.

The flat $(3,6,\dots)$-distribution of type $(k,l)$ can be described
in more explicit way as follows. Consider a Lie algebra $\m_{k,l}$,
generated as a vector space by elements $X$, $Y_i (1\le i\le k)$,
$Z_j (1\le j\le k+l)$, $\eta$, with the following non-trivial Lie
brackets:
\begin{align*}
[X,Y_i]&=Y_{i+1},\quad i=1,\dots,k-1;\\
[X,Z_j]&=Z_{j+1},\quad j=1,\dots,k+l-1;\\
[Y_1,Z_1]&=\eta.
\end{align*}
The flat  $(3,6,\dots)$-distributions of type $(k,l)$ is equivalent
to a left-invariant distribution on the Lie group $M_{k,l}$ with the
Lie algebra $\m_{k,l}$, which corresponds to the subspace
$D_o=\langle X, Y_1, Z_1\rangle$.

The main result of the paper can be formulated as follows: \vskip
.2in

 \textbf{Main Theorem.} \emph{For any $(3,6,\dots)$-distribution
$D$ of maximal class and type $(k,l)$ there exists a natural fiber
bundle $Q$ over the manifold $N$ equipped with a canonical frame.
The dimension of $Q$ is equal to the dimension of the Lie algebra
$\mathfrak G_{k,l}$. There exists $(3,6,\dots)$-distribution of
maximal class and type $(k,l)$ such that its algebra of
infinitesimal symmetries is equal to $\mathfrak G_{k,l}$} \vskip
.2in

 As a matter of fact if a $(3,6,\dots)$-distribution $D$ of maximal
class and type $(k,l)$ has the algebra of infinitesimal symmetries
of dimension equal to $\dim \mathfrak G_{k,l}$, then $D$ is locally
equivalent to the flat $(3,6,\dots)$-distribution of maximal class
and type $(k,l)$. This uniqueness statement will be proved in the
forthcoming paper.

\subsection{Description of algebras $\mathfrak s_{k,l}$ and $\mathfrak G_{k,l}$}
To complete the picture we describe explicitely the symbols of our
frame bundles (isomorphic to algebra of infinitesimal symmetries
$\s_{k,l}$ of the flat curve of of flags of type $(k,l)$) and the
corresponding universal prolongation algebra $\mathfrak G_{k,l}$
from the main theorem. The cases of rectangular and non-rectangular
diagrams are essentially different and considered separately

\textbf{(a) The case of rectangular diagram.} In this case $l=0$.
Let $V_{2k-1}=\langle E_1,\ldots, E_{2k-1}\rangle$ be the
irreducible $(2k-1)$-dimensional $\sll(2,\R)$-module with weight
spaces $\langle E_i\rangle$ and $\mathbb R^2=\langle
\varepsilon_1,\varepsilon_2\rangle$ be the standard $\gl(2,
\R)$-module. Identify the space $V$ with the $\sll(2,\mathbb
R)\oplus \gl(2,\mathbb R)$ module $V_{2k-1}\otimes\R^2$, such that
$e_i=E_i\otimes\varepsilon_1$ and $f_i =E_i\otimes\varepsilon_2$.
Then the algebra $\s_{k,0}$ is equal to the image in $\gl(V)$ of the
representation of the algebra $\sll(2,r)\oplus \gl(2,r)$,
corresponding to the module $V_{2k-1}\otimes\R^2$.

Further, it turns out that $\mathfrak G_{2,0}=\mathfrak {so}(4,3)$
(compare with \cite{Bryant}). On the other hand, for $k\geq 3$,
$l=0$ the first prolongation $\g_1$ of the algebra \eqref{Tan1} is
equal to $0$ so that
\begin{equation}
\label{Tan2} \mathfrak G_{k,0}=\mathbb R\eta\oplus V\oplus
\s_{k,0},\quad k\geq 3.
\end{equation}

\textbf{(b) The case of non-rectangular diagram.} The structure of
the Lie algebras $\s_{k,l}$ and $\mathfrak G_{k,l}$ in case $l>0$ is
much more complicated and can be defined via the language of
symplectic differential geometry. Let $r=2k+l-1$. Fix a (formal)
coordinate system $(x_1,\dots,x_r,p_1,\dots,p_r)$ in the symplectic
space of dimension $\R^{2r}$ with the symplectic form:
\[
dx_1\wedge dp_r - dx_1\wedge dp_{r-1} + \dots + (-1)^{r+1}
dx_r\wedge dp_1.
\]
Introduce the Poisson Lie bracket on the algebra of polynomials
$\R[x_i,p_j]$. We shall define  $\mathfrak G_{k,l}$ as a
one-dimensional extension of a certain subalgebra in this algebra.

Let $\P^{r-1}$ be the projective space with homogeneous coordinates
$[x_1:x_2:\ldots:x_r]$. Denote by $C$ the normal rational curve in
$\P^{r-1}$ given as an image of the Veronese embedding
\[
\P^1\to \P^{r-1},\quad [s:t]\mapsto
[s^{r-1}:s^{r-2}t:\ldots:t^{r-1}].
\]
There is a unique irreducible $SL(2,R)$-action on $\R^r$ such that
this rational normal curve is exactly an orbit of the highest vector
in $\R^r$ under this action.

Denote by $\T^bC$ the $b$-th tangential developable variety of $C$.
Here we assume that $\T^0C = C$. If $\V$ is any algebraic variety in
$\P^{r-1}$, we denote, as usual, by $I(\V)$ the ideal of homogeneous
polynomials in $x_1,\dots,x_r$ vanishing on $\V$. We shall also
denote by $I_b(\V)$ the subspace of all polynomials of degree $b$ in
$I(\V)$. Denote also by $\S_b\V$ the $b$-th secant variety of $\V$,
which is defined as an algebraic closure of the union of
$(n-1)$-planes in $\P^{r-1}$ passing through $b$ points from $\V$.
By definition we set $\S_1(\V)=\V$ and $\S_i(\V)=\emptyset$ for
$i\le 0$.

Now define a subalgebra $\tilde\g$ in $\R[x_i,p_j]$ in $\R[x_i,p_j]$
as a sum of the following several subspaces in $\R[x_i,p_j]$:
\begin{itemize}
\item the subalgebra $\gl(2,\R)$ spanned by:
\begin{align*}
X &= (r-1)x_2p_r - (r-2)x_3p_{r-1} + \dots + (-1)^{r-1}x_rp_2;\\
Y &= x_1p_{r-1} - 2x_1p_{r-2} + \dots + (-1)^{r-1}(r-1)x_{r-1}p_1;\\
H &= (r-1)x_1p_r - (r-3)x_2p_{r-1} + \dots +(-1)^{r-1}(1-r)x_rp_1;\\
Z &= x_1p_r - x_2p_{r-1} + \dots (-1)^{r-1}x_rp_1.
\end{align*}
\item $\langle p_1,\dots, p_n \rangle$; \\
\item $I_s = I_s(\V_s)\subset \R[x_1,\dots,x_r]$, where $\V_s=\S_{s-1}(\T^{k-2}C)$ is the $(s-1)$-th secant variety of $(k-2)$-th
tangential variety to the rational normal curve $C$. In particular,
we have:
\begin{align*}
I_0 &= \langle 1 \rangle;\\
I_1 &= \langle x_1,\dots, x_r\rangle;\\
I_2 & = \text{quadratic polynomials vanishing at }\T^{k-2}C;\\
I_3 & = \text{cubic polynomials vanishing at }\S_2(\T^{k-2}C);\\
\dots
\end{align*}
\end{itemize}

Note that in the extreme case of $k=2$ we have $\T^{k-2}C=C$, and,
for example, $I_2$ is spanned by all quadratic polynomials of the
form $x_ix_j-x_kx_l$, $i+j=k+l$. In general, for any $k\ge2$ the
tangential variety $\T^{k-2}C$ does not lie in any proper linear
subspace of $\P^{r-1}$. Therefore $\S_{s-1}(\T^{k-2}C)=\P^{r-1}$ for
sufficiently large $s$, and $I_s=0$. Thus, $\tilde\g$ is always
finite-dimensional, though its dimension depends exponentially on
$r$  and therefore on the dimension of the original manifold $M$
(for a fixed $k\ge2$).

The algebra $\s_{k,l}$ is equal to $\langle X,Y,H, Z,
\text{Id}\rangle \oplus I_2$. The algebra $\mathfrak G_{k,l}$ is
isomorphic to a one-dimensional extension of the Lie algebra
$\tilde\g$ by a semisimple element $Z'$ such that
$$[Z',f]=(\deg(f)-2)f$$ for any element
$f\in\tilde\g\subset\R[x_i,p_j]$.

Finally note that the main result of \cite{doubzel1} and
\cite{doubzel2} about rank 2 distributions on an $n$ dimensional
manifold can be formulated in a similar way as in our Main Theorem
here. The Young diagram of the linearizations consists of one row,
the corresponding flat curve is a curve of complete flags,
consisting of all osculating subspaces of the rational normal curve
in the projective space $\mathbb P^{2n-7}$, the algebra of its
infinitesimal symmetries is equal to the image of the irreducible
embedding of $\gl(2,\mathbb R)$ into $\mathfrak s_n=\gl(V)$, where
$\dim V=2n-6$, the flat distribution and its algebra of symmetries
is described as in the subsection 1.4 (d), replacing $\s_{k,l}$ by
$\s_n$. In particular, the algebra of infinitesimal symmetries is
equal to the Tanaka universal prolongation of $\mathbb R\eta\oplus
V\oplus \s_n$, which is equal to $\mathbb R\eta\oplus V\oplus \s_n$
itself for $n>5$ and to the exceptional simple Lie algebra $G_2$ for
$n=5$.
%
All this suggests that our Main Theorem can be generalized to much
general situation of distributions of arbitrary rank.
\section{Symplectification procedure}
\subsection {Description of a characteristic line distribution}
\setcounter{equation}{0}

Let us describe the characteristic line distribution ${\mathcal C}$
from subsection~\ref{ss11} in terms of a local basis $(X_1,
X_2,X_3)$ of the distribution $D$,
$D( q)={\rm span}\{X_1(q), X_2(q), X_3(q)\}$.
Denote $X_{i,j}=[X_i,X_j]$ for $1\leq i, j\leq 3$.
Let us introduce the ``quasi-impulses'' $u_i:T^*M\mapsto\mathbb R$,
$u_{i,j}:T^*M\mapsto\mathbb R$, $1\leq i,j\leq 3$:
\begin{equation}
\label{quasi25} u_i(\lambda)=p\cdot X_i(q),\quad
u_{i,j}(\lambda)=p\cdot X_{i,j}(q)\quad \lambda=(p,q),\,\, q\in
M,\,\, p\in T_q^* M.
\end{equation}
Then by definitions
\begin{equation}
\label{d2u} D^\perp=\{\lambda\in T^*M:
u_1(\lambda)=u_2(\lambda)=u_3(\lambda)=0\}.
\end{equation}
 As usual, for
given function $G:T^*M\mapsto \mathbb R$ denote by $\vec G$ the
corresponding Hamiltonian vector field defined by the relation
$i_{\vec G}\hat\sigma
=-d\,G
$.

\begin{lem}
\label{foli25lem}
The characteristic line distribution $\hat {\mathcal C}$ on
$D^\perp\backslash (D^2)^\perp$ satisfies
\begin{equation}
\label{foli25}
\hat{\mathcal C}= \langle
u_{2,3}\vec{u}_1-u_{1,3}\vec{u}_2+u_{1,2}\vec{u}_3
\rangle.
\end{equation}
\end{lem}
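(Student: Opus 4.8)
The plan is to realize the kernel of $\hat\sigma|_{D^\perp}$ at a point $\lambda$ as the intersection of $T_\lambda D^\perp$ with its own skew-orthogonal complement, and then to convert the condition of lying in this intersection into a linear-algebra problem governed by a $3\times 3$ antisymmetric matrix built from the functions $u_{i,j}$. Throughout, for a subspace $W\subset T_\lambda T^*M$ I write $W^\angle$ for its skew-orthogonal (i.e.\ $\hat\sigma$-orthogonal) complement.

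First I would record the two elementary identities that drive everything. From the defining relation $i_{\vec u_i}\hat\sigma=-du_i$ one gets, for any tangent vector $Y$, that $du_i(Y)=-\hat\sigma(\vec u_i,Y)$; in particular $du_j(\vec u_i)=\vec u_i(u_j)=\{u_i,u_j\}$, where $\{\cdot,\cdot\}$ denotes the Poisson bracket. Second, the Hamiltonian lift of a vector field satisfies $\{u_X,u_Y\}=u_{[X,Y]}$, which is a one-line computation in canonical coordinates from $u_X(p,q)=p\cdot X(q)$; applied to the chosen basis this gives $\{u_i,u_j\}=u_{i,j}$. Next I would identify the tangent space: since $D^\perp=\{u_1=u_2=u_3=0\}$ and $du_1,du_2,du_3$ are pointwise linearly independent (because $X_1,X_2,X_3$ are), we have $T_\lambda D^\perp=\bigcap_i\ker du_i$. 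By the first identity $\ker du_i=\{Y:\hat\sigma(\vec u_i,Y)=0\}$, so $T_\lambda D^\perp=\langle\vec u_1,\vec u_2,\vec u_3\rangle^\angle$, the span being $3$-dimensional since $\hat\sigma$ is nondegenerate and the $du_i$ are independent. Taking skew-orthogonal complements once more yields $(T_\lambda D^\perp)^\angle=\langle\vec u_1,\vec u_2,\vec u_3\rangle$. As the kernel of the restriction $\hat\sigma|_{D^\perp}$ is exactly $T_\lambda D^\perp\cap(T_\lambda D^\perp)^\angle$, it equals $T_\lambda D^\perp\cap\langle\vec u_1,\vec u_2,\vec u_3\rangle$.

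Finally I would solve the resulting system. Writing a candidate generator as $\sum_i a_i\vec u_i$, membership in $T_\lambda D^\perp$ means $du_j\bigl(\sum_i a_i\vec u_i\bigr)=\sum_i a_i u_{i,j}=0$ for $j=1,2,3$, i.e.\ the vector $a=(a_1,a_2,a_3)$ lies in the kernel of the antisymmetric matrix $\Omega=(u_{i,j})$. On $D^\perp\setminus(D^2)^\perp$ at least one entry $u_{i,j}$ is nonzero — this is precisely the condition cutting out the complement of $(D^2)^\perp$, since $D^2$ is spanned by the $X_i$ together with the $X_{i,j}$ — so $\Omega$ has rank $2$ and a one-dimensional kernel, spanned by its axial vector $(u_{2,3},-u_{1,3},u_{1,2})$. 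This produces exactly $\hat{\mathcal C}=\langle u_{2,3}\vec u_1-u_{1,3}\vec u_2+u_{1,2}\vec u_3\rangle$, and at the same time reproves the one-dimensionality of the kernel asserted earlier in the paper. The only genuine obstacle is bookkeeping of signs: one must fix the conventions in $i_{\vec G}\hat\sigma=-dG$ and in $\{\cdot,\cdot\}$ consistently so that $\{u_i,u_j\}=+u_{i,j}$, and then verify that the axial vector of $\Omega$ emerges with the stated signs $(+,-,+)$; everything else is routine linear algebra of antisymmetric $3\times3$ matrices.
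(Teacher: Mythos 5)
Your proof is correct and follows essentially the same route as the paper's: both show that a generator of the kernel must lie in $\langle\vec u_1,\vec u_2,\vec u_3\rangle$ (you via $(T_\lambda D^\perp)^\angle$, the paper via $i_H\hat\sigma\in\langle du_1,du_2,du_3\rangle$, which is the same observation), and then impose tangency to $D^\perp$ to reduce to the kernel of the antisymmetric matrix $(u_{i,j})$. Your write-up is merely more explicit about the identities $\{u_i,u_j\}=u_{i,j}$ and the rank-$2$ argument that the paper leaves as "easily implies."
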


\begin{proof}
Take a vector field $H$ on $D^\perp\backslash(D^2)^\perp$ such that
locally $\widehat{\mathcal C}(\lambda)= \bigl\{\mathbb{R}
H(\lambda)\}$. Then by definition of $\widehat{\mathcal C}$ we have
$i_H\hat\sigma|_{(D^2)^\perp}=0$. From this and \eqref{d2u} it
follows that $i_H\hat\sigma \in
\langle du_1, du_2, du_3 \rangle,$ which implies that
\begin{equation}
\label{Hsp} H\in \langle \vec u_1, \vec u_2, \vec u_3 \rangle.
\end{equation}
On the other hand, $H$ is tangent to $D^\perp$, i.e $du_j(H)=0$ for
$1\leq j\leq 3$. This and  (\ref{Hsp}) easily implies
(\ref{foli25}).
\end{proof}
Let $D'(q)\subset D(q)$  be the union of one-dimensional subspaces
$\pi_*({\mathcal C}(\lambda))$ is equal to $D(q)$:
\begin{equation}
\label{projC0} 
D'(q)=\left\{\pi_*(\widehat{\mathcal C}(\lambda))\,:\,\lambda\in
D^\perp\backslash (D^2)^\perp,\pi(\lambda) =q\right\}.
\end{equation}

As a consequence  of the previous if $\dim D^2(q)=6$, then
\begin{equation}
\label{projC} 
D'(q)=D(q)
\end{equation}
In particular, in this case the original distribution can be
recovered from its characteristic line distribution.

Similarly, from \eqref{foli25} it follows that if $\dim D^2(q)=4$ or
$\dim D^2(q)=5$, then the set  $D'(q)$
constitutes  a one-dimensional or a two-dimensional subspace of
$D(q)$ respectively. If $\dim D^2(q)=4$ then a line distribution
$D'$ is a characteristic subdistribution of $D$, i.e. $[D',
D]\subset D$. In this case we can make, at least locally, a
factorization of $M$ by the $1-$foliation generated by the line
distribution $D'(q)$ so that in the quotient manifold we get the
rank 2 distribution $D(q)/D'(q)$.  In this way we reduce the
equivalence problem for the original rank 3 distribution $D$ to the
equivalence problem for certain rank 2 distribution, which was
treated in \cite{doubzel1, doubzel2} If $\dim D^2(q)=5$, then it can
be shown that the rank 2 subdistribution $D'$ satisfies
$(D')^2\subseteq D$ and this is the unique rank 2 subdistribution,
satisfying this property. Therefore, sometimes it is called the
\emph{square root} of the distribution $D$. There are two
possibilities: either $(D')^2(q)= D(q)$ for generic $q$ on $M$ or
$D'(q)$ is involutive, i.e. $(D')^2=D'$. In the first case the
equivalence problem for  rank 3 distributions is reduced to the
equivalence problem for rank 2 distributions, which was treated in
\cite{doubzel1, doubzel2}.

So, the equivalence  problem for rank 3 distributions  cannot be
reduced to one for rank 2 distributions only in the following two
cases:  $\dim D^2=6$ or $\dim D^2=5$ and the square root $D'$ is
involutive. In the present paper we restrict ourselves to the case
$\dim D^2(q)=6$, i.e. to $(3,6,\dots)-$distributions


\subsection{The curves of flags associated with abnormal extremals}
\setcounter{equation}{0}

In what follows the canonical projection from $\mathbb P D^\perp$ to
$M$ will be denoted also by $\pi$. By analogy with \cite{doubzel1}
and \cite{doubzel2}, let $\mJ$ be the pull-back of the distribution
$D$ on $\mathbb P D^\perp\backslash\mathbb P(D^2)^\perp$ by the
canonical projection $\pi$:
\begin{equation}
\label{prejac} {\mathcal J}(\lambda)=
\{v\in T_{\lambda}(\mathbb P D^\perp):\,\pi_*\,v\in
D(\pi\bigl(\lambda)\bigr)\}.
\end{equation}
Note that $\dim \mJ = n-1$, $\mJ\subset \widetilde\Delta$, and
$\mathcal C\subset \mJ$ by \eqref{foli25} .
The distribution ${\mathcal J}$ is called the \emph {lift of
distribution $D$ to $\mathbb P D^\perp\backslash\mathbb P
(D^2)^\perp$}.

In the sequel we shall work with the lift $\mJ$ instead of the
original distribution~$D$. The crucial advantage of working with
$\mJ$ is that it has the distinguished line sub-distribution
$\mathcal C$, while the original distribution $D$ has no
distinguished sub-distributions in general.

We can produce a monotonic (by inclusion) sequence of distributions
(in general of nonconstant ranks) by making iterative Lie brackets
of $\mathcal C$ and $\mJ$.
Namely, first define a sequence of subspaces ${\mathcal
J}^{(i)}(\lambda)$, $\lambda\in \mathbb P D^\perp\backslash \mathbb
P (D^2)^\perp$, by the following recursive formulas:
\begin{equation}
\label{Ji0} {\mathcal J}^{(0)}=\mathcal J,\quad {\mathcal J}^{(i)}=
{\mathcal J}^{(i-1)}+ [{\mathcal C},{\mathcal J}^{(i-1)}],
\quad \forall\,i\geq 1.
\end{equation}

\begin{equation}
\label{contr1} \mJ^{(-i)}(\lambda)=
\left\{v\in  \mJ^{(1-i)}(\lambda):
\begin{array}{l}
\exists\
\text { a vector field}\,\, {\mathcal V}\subset {\mathcal
J}^{(1-i)}\,\,
\text {with}\,\, \mathcal V(\lambda)=v\\
\text {such that}\,\, \bigr[{\mathcal C}, {\mathcal
V}\bigl](\lambda)\in {\mathcal J}^{(1-i)}(\lambda),
\end{array}\right\}\quad \forall i\geq 1.
\end{equation}
It is easy to show  that in \eqref{contr1} one can replace the
quantor $\exists$ by $\forall$. It is clear by constructions that
$\mJ^{(i)}(\lambda)\subseteq {\mathcal J}^{(i+1)}(\lambda)$ for all
$i\in\mathbb Z$. Besides, $\mJ^{(i})\subset \widetilde\Delta$ for
any $i\in \mathbb Z$, because $\mJ\subset \widetilde\Delta$ and
$\mathcal C$ is the characteristic subdistribution of
$\widetilde\Delta$. Thus we get a flag
\begin{equation}
\label{flag0}
\ldots\subseteq\mJ^{(-i)}(\lambda)\subseteq\ldots\subseteq\mJ^{(-1)}(\lambda)\subset
\mJ^{(0)}(\lambda)\subset\mJ^{(1)}(\lambda)\subseteq\ldots\subseteq
\mJ^{(i)}(\lambda)\subseteq\ldots\subset\widetilde\Delta(\lambda)
\end{equation}
 Further, by analogy with \cite{zeldga} and
\cite{zelcheng}, the following identity holds
\begin{equation}
\label{contr} {\mathcal J}^{(-1-i)}(\lambda)=
\{v\in\widetilde\Delta(\lambda): \tilde\sigma (v, w)=0\,\, \forall
w\in {\mathcal J}^{(i)}(\lambda)\}, \quad \forall\,i\geq 0,
\end{equation}
where as before  $\tilde\sigma$ is the antisymmetric form defined on
each subspace of the distribution $\widetilde\Delta$ canonically up
to a multiplication by a constant.

We summarize the main properties of  the sequences $\{{\mathcal
J}^{(i)}\}_{i\in Z}$  in the following:
\medskip
\begin{prop}
\label{proper} ~

\begin{enumerate}

\item ${\rm dim}\,{\mathcal J}^{(1)}(\lambda)-{\rm dim}\, {\mathcal J}(\lambda) \leq 2$ and the equality holds iff
$\dim\, D^2\bigl(\pi(\lambda)\bigr)=6$,

\item ${\rm dim}\,{\mathcal J}^{(i)}(\lambda)-{\rm dim}\, {\mathcal J}^{(i-1)}(\lambda)\leq 2$, for any $i\in Z$

\item  $[\mathcal C, \mJ^{(i-1)}]\subseteq \mJ^{(i)}$ and $[\mathcal C, \mJ^{(i-1)}]=\mJ^{i}$ if and only if  either $i\geq 1$ or
$\dim \mJ^{(i)}-\dim \mJ^{(i-1)}=\dim \mJ^{(i-1)}-\dim \mJ^{(i-2)}$
for $i\leq 0$.
\end{enumerate}
\end{prop}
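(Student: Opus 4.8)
The plan is to work entirely in local coordinates using the quasi-impulse functions $u_i, u_{i,j}$ from \eqref{quasi25} and the explicit generator of $\C$ from Lemma~\ref{foli25lem}. Write $h = u_{2,3}\vec u_1 - u_{1,3}\vec u_2 + u_{1,2}\vec u_3$ for the characteristic field spanning $\C$, so that statement (3) for $i\ge 1$ is essentially the definition \eqref{Ji0}, and the content lies in the dimension bounds and in the negative/boundary cases.

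\textbf{Step 1: the base case (1).} First I would compute $\mJ^{(1)} = \mJ + [\C,\mJ]$ modulo $\mJ$. Since $\mJ = \pi^{-1}_*(D)$, a local frame of $\mJ$ consists of the Hamiltonian lifts of $X_1, X_2, X_3$ together with the vertical fibers $V$. Taking the bracket of $h$ with these generators and projecting to the quotient $\mJ^{(1)}/\mJ$, the Poisson brackets $\{u_i, u_j\}$ feed in the structure functions $u_{i,j}$, and the rank of the resulting map is governed by how many of the $u_{i,j}(\lambda)$ are independent of the $u_i$ on $D^\perp$. The key point is that $\dim D^2(\pi(\lambda)) = 6$ is exactly the condition that all three brackets $X_{1,2}, X_{1,3}, X_{2,3}$ are independent modulo $D$, which forces the jump $\dim\mJ^{(1)} - \dim\mJ$ to reach its maximal value $2$; a smaller derived dimension produces a smaller jump. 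This should give both the inequality and the stated equality criterion in (1).

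\textbf{Step 2: the uniform bound (2) via the symplectic pairing.} For the bound at every level I would use the identity \eqref{contr}, which identifies $\mJ^{(-1-i)}$ with the $\tilde\sigma$-skew-orthogonal complement of $\mJ^{(i)}$ inside $\widetilde\Delta$. Because $\tilde\sigma$ is a (conformally) symplectic form on $\widetilde\Delta/\C$, taking skew-complements reverses inclusions and matches the gap $\dim\mJ^{(i)} - \dim\mJ^{(i-1)}$ with the gap $\dim\mJ^{(-i)} - \dim\mJ^{(-i-1)}$. Hence it suffices to control the gaps on the non-negative side. There the map induced by $\ad_h$ sends the graded piece $\mJ^{(i-1)}/\mJ^{(i-2)}$ onto $\mJ^{(i)}/\mJ^{(i-1)}$, so each successive gap is the image of the previous one and cannot increase; combined with the base gap being at most $2$ from Step 1, this propagates the bound $\le 2$ to all $i\ge 0$, and then \eqref{contr} transports it to all negative $i$ as well.

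\textbf{Step 3: the equality criterion in (3) for $i\le 0$.} This is the part I expect to be the main obstacle, since it couples the defining condition \eqref{contr1} of the negative filtration with the bracket map. For $i\le 0$ the inclusion $[\C,\mJ^{(i-1)}]\subseteq\mJ^{(i)}$ is automatic, and equality fails precisely when the extension defining $\mJ^{(i-1)}$ from $\mJ^{(i-2)}$ is ``degenerate.'' I would analyze this by combining the skew-orthogonality \eqref{contr} with the fact, noted after \eqref{contr1}, that the existential quantifier there may be replaced by a universal one: this makes $\ad_h$ well defined on the graded pieces and shows that $[\C,\mJ^{(i-1)}] = \mJ^{(i)}$ holds iff the bracket map $\mJ^{(i-1)}/\mJ^{(i-2)} \to \mJ^{(i)}/\mJ^{(i-1)}$ is surjective, i.e. iff the two consecutive gaps coincide. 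The delicate point is to verify that no dimension is lost in passing through the zero level, where the duality \eqref{contr} relates a non-negative gap to a negative one; here I would track the graded pieces explicitly through $\tilde\sigma$ and check that the ranks of $\ad_h$ on dual pieces agree, which yields exactly the stated equivalence $\dim\mJ^{(i)}-\dim\mJ^{(i-1)} = \dim\mJ^{(i-1)}-\dim\mJ^{(i-2)}$.
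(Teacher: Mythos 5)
Your proposal is correct and follows essentially the same route as the paper's own (very terse) proof: property (1) by computing $[\C,\mJ]$ through the bracket of the characteristic direction with a frame of $D$, property (2) for positive indices from the rank-one nature of $\C$ (gaps cannot increase) and for negative indices by transporting the bound through the skew-orthogonality relation \eqref{contr}, and property (3) directly from the definitions \eqref{Ji0} and \eqref{contr1}. The only place where you are slightly less complete than needed is the "only if" direction of (1): to see that $\dim D^2(\pi(\lambda))=5$ forces the jump to drop to $1$ one must use that $\lambda\in D^\perp$ annihilates the linear relation among the $X_{i,j}$ modulo $D$, but this is a one-line verification and does not affect the soundness of the approach.
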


\begin{proof}
Given a point $\lambda\in \mathbb P D^\perp\backslash\mathbb P
(D^2)^\perp$ take a vector field $H$ on $M$ tangent to $D$ such that
$\mathcal \pi_*C(\lambda)= \{\mathbb R H\bigl(\pi(\lambda)\bigr)\}$.
Then directly from definition it follows that
$$\mJ^{(1)}(\lambda)=\{v\in T_{\lambda}(\mathbb P D^\perp):\,\pi_*\,v\in
[H,D](\pi\bigl(\lambda)\bigr)\}.$$ This implies property (1) of the
proposition. Property (2) for $i\geq 2$ follows from the first
relation of Property (1) and the fact that the distribution
$\mathcal C$ has rank 1. Property (2) for $i\leq 0$ follows from
relation \eqref{contr}. Property (3) follows easily from definitions
\eqref{Ji0} and \eqref{contr1}.
\end{proof}

The dynamics of the flags \eqref{flag0}
along any abnormal extremal
defines certain
curve of flags of isotropic and coisotropic subspaces in a linear
symplectic space. More precisely, let $\gamma$ be a segment of
abnormal extremal of $D$ and $O_\gamma$ be a neighborhood of
$\gamma$ in $\mathbb PD^\perp$ such that the factor
$N=O_\gamma /(\text {\emph{the characteristic one-foliation}})$
 is a
well defined smooth manifold.
Let $\Phi :O_\gamma\to N$ be the canonical projection on the factor.
For each $i\in \mathbb Z$ we can define the following curves of
subspaces in $T_\gamma N$:
\begin{equation}
\label{jacurve} \lambda\mapsto
 J^{(i)}(\lambda)\stackrel{def} {=} \Phi_*\bigl({\mathcal J}^{(i)}(\lambda) \bigr),
\end{equation}

Note also that $\Delta \stackrel{def}{=}\Phi_*\widetilde\Delta$ is
well defined contact distribution on $N$ and the form $\tilde\sigma$
induces on each space $\Delta(\gamma)$ the canonical, up to a
multiplication by a constant, symplectic form, which will be denoted
by $\sigma$. From \eqref{flag0} it follows that
$J^{(i)}(\lambda)\subset \Delta(\gamma)$ for all $i\in\mathbb Z$ and
$\lambda\in\gamma$. By constructions,
\begin{equation}
 \label{contrcurve}
J^{(-1-i)}(\lambda)=\bigl(J^{(i)}(\lambda)\bigr)^\angle
\end{equation}
Moreover, the subspaces $J^{(i)}(\lambda)$ are coisotropic w.r.t.
the form $\sigma$ for $i\geq 0$ and isotropic for $i<0$. Besides,
$\dim\, \Delta(\gamma)=2(n-3)$ and $\dim J^{(0)}(\lambda)=n-2$.

The curve
\begin{equation}
\label{flag1} \lambda\mapsto \left\{ \ldots\subseteq
J^{(-i)}(\lambda)\subseteq\ldots\subseteq J^{(-1)}(\lambda)\subset
J(\lambda)\subset J^{(1)}(\lambda)\subseteq\ldots\subseteq
J^{(i)}(\lambda)\subseteq\ldots \right\}, \quad \lambda \in \gamma,
\end{equation}
of flags of isotropic and coisotropic subspaces in a linear space
$\Delta(\gamma)$ will be called
\emph{the linearization} of the flag $\{\mJ^{(i)}\}$ along the
characteristic curve $\gamma$. Clearly, any invariant of such curve
w.r.t. the action of the group $\text{CSp}
\bigl(\Delta(\gamma)\bigr)$ of linear transformations of
$\Delta(\gamma)$, preserving the form $\sigma$ up to a
multiplication by a constant, automatically produces an invariant of
the distribution $D$ itself. Moreover, it turns out that under
certain generic assumptions one can construct the canonical frames
of the distribution $D$ from the study of bundles of moving frames
canonically associated with such curves.


The point $\lambda\in \mathbb P D^\perp\backslash \mathbb P
(D^2)^\perp$ is called regular if there is $i(\lambda)\in\mathbb N$
such that $J^{i(\lambda)}(\lambda)=\Delta(\gamma)$, where $\gamma$
is the abnormal extremal, containing $\lambda$. The point $\lambda$
is called strongly regular, if it is regular and there is a
neighborhood $U$ of $\lambda$ in $\mathbb P D^\perp$ such that for
any $i\in \mathbb N$ dimensions of $J^{(i)}(\bar \lambda)$ does not
depend on $\bar\lambda\in \gamma\cap U$. Strong regularity implies
that
\begin{equation}
\label{jump} \dim J^{(i+1)}(\lambda)-\dim J^{(i)}(\lambda)\leq \dim
J^{(i)}(\lambda)-\dim J^{(i-1)}(\lambda).
\end{equation}
 and we define as
follows the \emph{Young diagram $T$ of the curve $J^{(0)}$ at
$\lambda$}: for $i\geq 1$ the number of boxes in the $i$th column of
$T$ is equal to $\dim J^{(i)}(\lambda)-\dim J^{(i-1)}(\lambda)$.
 A strongly regular point
$\lambda$ such that the Young diagram of the curve $J^{(0)}$ at
$\lambda$ is equal to $T$ will be also called \emph{$T$-regular}.

Rewriting Properties (1)-(2) of Proposition \ref{proper}
in terms of subspaces $J^{(i)}(\lambda)$, we get

\begin{prop}
\label{proper1} ~

\begin{enumerate}
\item ${\rm dim}\,J^{(1)}(\lambda)-{\rm dim}\, J(\lambda) \leq 2$ and the equality holds iff
$\dim\, D^2\bigl(\pi(\lambda)\bigr)=6$,

\item ${\rm dim}\,J^{(i)}(\lambda)-{\rm dim}\, J^{(i-1)}(\lambda)\leq 2$, for any $i\in Z$
%
%
%

\end{enumerate}
\end{prop}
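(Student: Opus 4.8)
The plan is to deduce both statements directly from Proposition~\ref{proper} by showing that the passage from the spaces $\mJ^{(i)}(\lambda)$ to their images $J^{(i)}(\lambda)=\Phi_*\bigl(\mJ^{(i)}(\lambda)\bigr)$ drops the dimension by exactly $1$, uniformly in $i$, and therefore preserves every consecutive dimension gap. First I would record that the differential $\Phi_*$ at a point $\lambda\in\gamma$ has kernel precisely $\C(\lambda)$, since $\C$ is tangent to the fibers of $\Phi$ (the leaves of the characteristic $1$-foliation). Consequently, for each $i$,
\[
\dim J^{(i)}(\lambda)=\dim\mJ^{(i)}(\lambda)-\dim\bigl(\mJ^{(i)}(\lambda)\cap\C(\lambda)\bigr),
\]
so the whole proof reduces to verifying that $\C(\lambda)\subseteq\mJ^{(i)}(\lambda)$ for every $i\in\Z$; this forces the intersection to be the line $\C(\lambda)$ and yields $\dim J^{(i)}(\lambda)=\dim\mJ^{(i)}(\lambda)-1$.

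The inclusion $\C\subseteq\mJ^{(i)}$ I would check separately in the two ranges. For $i\ge 0$ it is immediate: $\C\subset\mJ=\mJ^{(0)}$ (noted right after \eqref{foli25}) and the flag \eqref{flag0} is increasing, so $\C\subseteq\mJ^{(0)}\subseteq\mJ^{(i)}$. For $i<0$, write $i=-1-j$ with $j\ge 0$ and use \eqref{contr}, which identifies $\mJ^{(-1-j)}(\lambda)$ with the $\tilde\sigma$-skew-orthogonal complement of $\mJ^{(j)}(\lambda)$ inside $\widetilde\Delta(\lambda)$. Since $\C$ is the characteristic distribution of the quasi-contact structure $\widetilde\Delta$, the line $\C(\lambda)$ lies in the kernel of $\tilde\sigma|_{\widetilde\Delta(\lambda)}$, so $\tilde\sigma\bigl(\C(\lambda),w\bigr)=0$ for all $w\in\widetilde\Delta(\lambda)$, in particular for all $w\in\mJ^{(j)}(\lambda)$. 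Hence $\C(\lambda)\subseteq\mJ^{(-1-j)}(\lambda)$, completing the inclusion across the whole range of $i$.

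Combining the two ranges gives $\dim J^{(i)}(\lambda)=\dim\mJ^{(i)}(\lambda)-1$ for every $i\in\Z$, and therefore
\[
\dim J^{(i)}(\lambda)-\dim J^{(i-1)}(\lambda)=\dim\mJ^{(i)}(\lambda)-\dim\mJ^{(i-1)}(\lambda).
\]
Both assertions of Proposition~\ref{proper1} are then verbatim translations of the corresponding assertions of Proposition~\ref{proper}: statement (1) is the case $i=1$ (including the equality criterion $\dim D^2(\pi(\lambda))=6$, which is unaffected by the uniform shift), and statement (2) is the general-$i$ bound. The only non-formal point, and hence the part I expect to require the most care, is the uniform inclusion $\C\subseteq\mJ^{(i)}$ in the negative range; this is exactly why the argument leans on the skew-orthogonality description \eqref{contr} and on $\C$ being the kernel of $\tilde\sigma|_{\widetilde\Delta}$, rather than on the bracket-generating definition of the $\mJ^{(i)}$. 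Once this inclusion is established, the dimension bookkeeping is automatic.
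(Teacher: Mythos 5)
Your proof is correct and follows the route the paper intends: Proposition~\ref{proper1} is stated there simply as a ``rewriting'' of Proposition~\ref{proper}, and the content you supply --- that $\C(\lambda)\subseteq\mJ^{(i)}(\lambda)$ for all $i\in\Z$ (via the increasing flag for $i\ge0$ and via \eqref{contr} together with $\C$ being the characteristic of $\widetilde\Delta$ for $i<0$), so that $\Phi_*$ shifts every dimension by exactly one --- is precisely the implicit justification. Nothing is missing.
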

The last proposition means that columns of the Young diagrams of
linearizations along abnormal extremals of rank 3 distributions have
either one or two boxes and the first column has two boxes if and
only if  $\dim\, D^2\bigl(\pi(\lambda)\bigr)=6$.
\begin{defin}
A rank 3 distribution $D$ is said to be of maximal class at a point
$q\in M$, if there exists at least one strongly regular point
$\lambda\in \mathbb P D^\perp\backslash \mathbb P (D^2)^\perp$ such
that $\pi(\lambda)=q$.
\end{defin}
Since distributions $\mJ$ and $\mathcal C$ are algebraic on each
fiber of $\mathbb P D^\perp$, the maximality of class at a point $q$
implies that there is a Young diagram $T$ such that the set of
$T$-regular points of the fiber $\pi^{-1}(q)\cap \mathbb P D^\perp$
is a nonempty Zariski open subset. In this case $T$ will be called
the \emph {diagram of the  distribution $D$ at  the point $q$}.

In the sequel we will study rank 3 distributions of maximal class
with  $\dim\, D^2=6$ and with the fixed diagram $T$, i.e.
$(3,6,\dots)$ distributions. The number of columns in the diagram
$T$, consisting of one box, is called the \emph{shift} of the
diagram $T$ and it will be denoted by $l$. We also assume that the
number of remaining columns (all of which consisting of 2 boxes)
 is equal to $k-1$.  In this case we also say that the diagram $T$ is
 of type $(k,l)$.  From the assumption that  $\dim\,
D^2=6$ it follows that $k\geq 2$. Note that the number of boxes in
$T$ is equal to
 ${\rm rank}\,
\Delta(\gamma)-{\rm rank}\, J^{(0)}(\lambda)=n-4$. Therefore
$n=2k+l+2$ and the parity of $l$ is equal to the parity of $n$
(recall that $n$ is the dimension of the ambient manifold $M$).

Note that germs of rank 3 distributions of the maximal class are
generic. Indeed, from algebraicity of  distributions $\mJ$ and
$\mathcal C$ on each fiber of $\mathbb P D^\perp$ it follows that
the distribution has maximal class at a point $q_0$ if and only if
its jet of sufficiently high order belongs to the Zariski open set
of the jet space of this order. Therefore in order to prove the
statement it is sufficient to give just one example of a germ of
rank $3$-distributions of the maximal class. The flat
$(3,6,\dots)$-distribution of type $(k,l)$ with $2k+l+2=n$, define
in the Introduction, provides such example.

Finally let us reformulate property (3) of Proposition \ref{proper}
in terms of subspaces $J^{(i)}(\lambda)$. For this let us introduce
some notation. Let $G_k(W)$ be the Grassmannian $G_k(W)$ of
$k$-dimensional subspaces of a linear space $W$. Take a smooth
(unparametrized) curve $\Lambda:\gamma\mapsto G_k(W)$. Let
$\mathfrak S(\Lambda)$ be the set of all smooth curves
$\ell:\gamma\mapsto W$ such that $\ell(\lambda)\in \Lambda(\lambda)$
for all $\lambda\in\gamma$. If $\varphi:\gamma\mapsto\mathbb R$ is a
parametrization of $\gamma$, $\varphi(\lambda)=0$ and
$\psi=\varphi^{-1}$, then denote
\begin{equation}
\label{Jispan}\mathcal D^{(i)} \Lambda(\lambda)={\rm span}\Bigl\{
\frac{d^j}{dt^j}\ell\bigl(\psi(t)\bigr)|_{t=0}: \ell\in \mathfrak
S(\Lambda),\,
0\leq j\leq i\Bigr\}.
\end{equation}
In particular, directly from definitions it follows that

\begin{equation}
\label{di} J^{(i)}(\lambda)=\mathcal D^{(i)}J(\lambda).
\end{equation}
Besides, as a consequence of property (3) of Proposition
\ref{proper} we have the following

\begin{prop}
\label{prop3} Assume that $\lambda$ is strongly regular, then
$\mathcal D^{(1)}J^{(i-1)}(\lambda)\subseteq J^{(i)}(\lambda)$ and
$\mathcal D^{(1)}J^{(i-1)}(\lambda)=J^{(i)}(\lambda)$ if and only if
either $i\geq 1$ or $\dim J^{(i)}(\lambda)-\dim
J^{(i-1)}(\lambda)=\dim J^{(i-1)}(\lambda)-\dim J^{(i-2)}(\lambda)$
for $i\leq 0$.

\end{prop}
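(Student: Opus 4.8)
The plan is to read Proposition~\ref{prop3} as the image, under the leaf-space projection $\Phi\colon O_\gamma\to N$, of property~(3) of Proposition~\ref{proper}. The bridge is the identity \eqref{di} together with the relation $J^{(i)}=\Phi_*\mJ^{(i)}$ from \eqref{jacurve}, and the whole argument rests on one commutation principle: downstairs the osculation operator $\mathcal D^{(1)}$ corresponds exactly to the upstairs operation ``add a single bracket with $\mathcal C$''. So first I would prove that for every $i$
\[
\mathcal D^{(1)}J^{(i-1)}(\lambda)=\Phi_*\bigl(\mJ^{(i-1)}(\lambda)+[\mathcal C,\mJ^{(i-1)}](\lambda)\bigr).
\]

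To establish this, take $\ell\in\mathfrak S(J^{(i-1)})$. Since $\mathcal C=\ker\Phi_*$ and $\mathcal C\subset\mJ^{(i-1)}$ (recall $\mathcal C\subset\mJ$ and $\mathcal C$ sits at the bottom of the flag \eqref{flag0}), the map $\Phi_*\colon\mJ^{(i-1)}(\bar\lambda)\to J^{(i-1)}(\gamma)$ is surjective with one-dimensional kernel, so $\ell$ lifts to a vector field $\mathcal V\subset\mJ^{(i-1)}$ along $\gamma$ with $\Phi_*\mathcal V=\ell$. Choosing a flow-box $(t,y)$ for the characteristic foliation in which its generator $H$ equals $\partial_t$ and $\Phi$ is the projection $(t,y)\mapsto y$, a direct computation gives $\Phi_*[H,\mathcal V](\lambda)=\tfrac{d}{dt}\bigl(\Phi_*\mathcal V\bigr)(\psi(t))\big|_{t=0}$; that is, pushing forward the bracket with $\mathcal C$ yields the derivative of the projected curve in the fixed space $\Delta(\gamma)$. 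As $\mathcal V$ ranges over all lifts, the values $\ell(\lambda)$ span $J^{(i-1)}(\lambda)$ and the derivatives span $\Phi_*[\mathcal C,\mJ^{(i-1)}](\lambda)$, which is the displayed formula. Because both $\mJ^{(i-1)}+[\mathcal C,\mJ^{(i-1)}]$ and $\mJ^{(i)}$ contain $\mathcal C=\ker\Phi_*$, and the first is contained in the second by the inclusion part of property~(3) of Proposition~\ref{proper}, the map $\Phi_*$ is injective on this nested pair. Hence the inclusion $\mathcal D^{(1)}J^{(i-1)}\subseteq J^{(i)}$ always holds, and $\mathcal D^{(1)}J^{(i-1)}(\lambda)=J^{(i)}(\lambda)$ if and only if $\mJ^{(i-1)}+[\mathcal C,\mJ^{(i-1)}]=\mJ^{(i)}$.

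It remains to test this last equality. For $i\ge 1$ it is nothing but the defining recursion \eqref{Ji0}, so equality holds automatically, matching the ``$i\ge1$'' case of the proposition. For $i\le 0$ I would introduce the velocity map $\beta\colon\mJ^{(i-1)}(\lambda)\to\mJ^{(i)}(\lambda)/\mJ^{(i-1)}(\lambda)$, $\beta(v)=[\mathcal C,\mathcal V](\lambda)\bmod\mJ^{(i-1)}$, which is well defined on values (not extensions) by strong regularity and the remark that $\exists$ may be replaced by $\forall$ in \eqref{contr1}. Its image generates the quotient $\bigl(\mJ^{(i-1)}+[\mathcal C,\mJ^{(i-1)}]\bigr)/\mJ^{(i-1)}$, so the equality above is equivalent to surjectivity of $\beta$; meanwhile definition \eqref{contr1} identifies $\ker\beta$ precisely with $\mJ^{(i-2)}(\lambda)$. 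Rank–nullity then gives $\operatorname{rank}\beta=\dim\mJ^{(i-1)}-\dim\mJ^{(i-2)}$, whence $\beta$ is onto exactly when $\dim\mJ^{(i)}-\dim\mJ^{(i-1)}=\dim\mJ^{(i-1)}-\dim\mJ^{(i-2)}$. Since $\mathcal C$ is contained in every $\mJ^{(j)}$, the projection $\Phi_*$ lowers each dimension by exactly $1$, so the gaps for $\mJ^{(j)}$ and for $J^{(j)}$ coincide; translating the displayed dimension equality into the $J^{(j)}$ gives precisely the condition stated in the proposition.

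The main obstacle is the commutation lemma of the second paragraph: one must verify carefully, and not merely schematically, that the push-forward of the Lie bracket with a generator of the characteristic direction equals the ordinary derivative of the projected curve in $\Delta(\gamma)$, and that $\beta$ is independent of the chosen extension $\mathcal V$. Once these two facts are in hand, together with the clean identification $\ker\beta=\mJ^{(i-2)}$ read off from \eqref{contr1}, the remainder is bookkeeping with the uniform dimension shift induced by $\Phi_*$.
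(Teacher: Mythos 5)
Your argument is correct and follows exactly the route the paper intends: the paper derives Proposition~\ref{prop3} simply by declaring it ``a consequence of property (3) of Proposition~\ref{proper}'' (itself proved by ``follows easily from definitions \eqref{Ji0} and \eqref{contr1}''), and your proof supplies precisely the omitted details --- the commutation of $\Phi_*$ with the bracket against the characteristic generator, and the rank--nullity bookkeeping for the case $i\le 0$. Nothing in your write-up diverges from or adds to the paper's strategy; it is the same deduction carried out carefully.
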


\section{Canonical bundles of
moving frames for curves of flags associated with abnormal
extremals} \label{s3}

In the present section given the Young diagram $T$ of type $(k,l)$
with $k\geq 2$ we construct the canonical bundle of moving frames
for any curve of flags \eqref{flag1} with the diagram $T$ in a
linear space $\Delta(\gamma)$. This gives automatically the
canonical frame bundle for any rank 3 distribution with the diagram
$T$ on the contact distribution $\Delta$ of the manifold $N$
constructed in the previous section. The main point is that this
frame bundle has the constant symbol in a sense defined in the
Introduction. This symbol is actually equal to the algebra of
infinitesimal symmetries of the flat curve of flags, corresponding
to the diagram $T$.

We will work with $J^{(i)}$ considered as a vector bundle over
$\gamma$ with the fiber $J^{(i)}(\lambda)$ over a point $\lambda$.
The cases of rectangular and non-rectangular Young diagrams are
quite different and  will be considered separately.

\subsection{Curves of flags with rectangular diagram}
Assume that the shift $l$ of the diagram $T$ is equal to zero, i.e.
$T$ is a rectangle with $2$ rows and $2k-1$ columns. In this case
$J^{(-k)}(\lambda)=0$ and $\dim J^{(-k+1)}(\lambda)=2$. By
Proposition \ref{prop3} we have
$(J^{(-k+1)})^{(2k-2)}(\lambda)=\Delta(\gamma)$.
First fix a parametrization $\varphi:\gamma\mapsto\mathbb R$ of
$\gamma$. In the sequel, if $s$ is a section of the bundle
$J^{(i)}$, then the vector $s(t)$ belongs to
$J^{(i)}\bigl(\varphi^{-1}(t)\bigr)$.
The bundle $J^{(-k+1)}$ has a unique (w.r.t. the parametrization
$\vf$) connection such that any its horizontal section $e$ satisfies
\begin{equation}
\label{connect} e^{(2k-1)}(t)\in J^{(2k-3)}\bigl(\vf^{-1}(t)\bigr)
\quad \forall\, t.
\end{equation}
A tuple
\begin{equation}
\label{canrect} \bigl(e_1(t), e_2(t),e_1'(t), e_2'(t),\ldots,
e_1^{(2k-2)}(t), e_2^{(2k-2)}(t)\bigr),
\end{equation}
 where $e_1$ and $e_2$ are
horizontal sections of the canonical (w.r.t $\vf$) connection on the
bundle $J^{(-k+1)}$
is called the \emph{normal (w.r.t $\vf$) moving frame of the curve
\eqref{flag1} generated by the pair $(e_1,e_2)$}.
All such frames (for the fixed parametrization $\vf$
) constitute the principle bundle over $\gamma$ with a structure
group $GL(2,\mathbb R)
$.
If $e_1$ and $e_2$ are two nonproportional horizontal sections of
the canonical connection on the bundle $J^{(-k+1)}$,
then
\begin{equation}
\label{exp1}
\begin{pmatrix}e_1^{(2k-1)}(t)\\
e_2^{(2k-1)}(t)
\end{pmatrix}=\sum_{i=1}^{2k-3}A_i(t) \begin{pmatrix}e_1^{(i)}(t)\\
e_2^{(i)}(t)
\end{pmatrix}
\end{equation}
for some $2\times 2$-matrices $A_i(t)$. Note that the operator
$\mathcal A_i^\vf(\lambda):J^{(-k+1)}(\lambda)
\mapsto J^{(-k+1)}(\lambda)$
, having the matrix $A_i(t))$ w.r.t. the basis
$\bigl(e_1(t),e_2(t)\bigr)$, where $t=\vf(\lambda)$ does not depend
on the choice of the sections $e_1$ and $e_2$ as above. The
operators ${\mathcal A}_i^\vf(\lambda)$ are invariants of the
parameterized curve $t\mapsto J^{(-k+1)}\bigl(\vf^{-1}(t)\bigr)$.

Now we will show that the curve \eqref{flag1} (or equivalently the
curve $\gamma$) can be endowed with the canonical projective
structure, i.e., we have a distinguished set of parameterizations
(called projective) such that the transition function from one such
parameterization to another is a M\"{o}bius transformation. For this
let
\begin{equation}
\label{rho1} \rho_{1, \vf}(\lambda)={\rm tr}\,\bigl({\mathcal
A}_{2k-3}^\vf(\lambda)\bigr).
\end{equation}

How $\rho_{1,\vf}$ transforms under reparametrization of $\gamma$?
Let $\varphi_1$ be another parametrization and
$\upsilon=\varphi\circ\varphi_1^{-1}$. Then it is not hard to show
that $\rho_{1,\vf}$ and $\rho_{1,\vf_1}$ are related as follows:
\begin{equation}
 \label{rhorep}
 \rho_{1,\vf_1}(\lambda)=\upsilon'(\tau)^2 \rho_{1,\vf}(\lambda)+
 C^1_k
\mathbb{S}(\upsilon)(\tau),\quad \tau=\vf_1(\lambda)
\end{equation}
 where $\mathbb{S}(\upsilon)$ is a
Schwarzian derivative of $\upsilon$, i.e.
$\mathbb S(\upsilon)=
\frac {d}{dt}\Bigl(\frac {\upsilon''}{2\,\upsilon'}\Bigr)
-\Bigl(\frac{\upsilon''}{2\,\upsilon'}\Bigr)^2$ and $C^1_k$ is a
nonzero constant. From the last formula and the fact that ${\mathbb
S}\upsilon\equiv 0$ if and only if the function $\upsilon$ is
M\"{o}bius it follows that \emph{the set of all parameterizations
$\varphi$ of $\gamma$ such that
\begin{equation}
\label{A1} \rho_{1,\vf}\equiv 0
\end{equation}
defines the canonical projective structure on $\gamma$}. Such
parameterizations are called the \emph {projective parameterizations
of the abnormal extremal $\gamma$}. The set of the normal moving
frames of the curve \eqref{flag1} w.r.t. all projective
parameterizations constitute the principle bundle over $\gamma$ with
a structure group $\text{ST}(2, \mathbb R)\times \text{GL}(2,\mathbb
R)
$, where
$\text {ST}(2,\mathbb R)$ denotes the group of lower triangular real
$2\times 2$ matrices with unit determinant.

Now let, as before, $N$ be a manifold  of all leaves of the
characteristic foliation $\C$ in a small neighborhood of a point
$\lambda_0\in\PDD$ such that the linearizations of the flag
$\{\mJ^{(i)}\}$ along any leaf of $\C$ in this neighborhood has the
Young diagram $T$ of type $(k,0)$. Consider the fiber bundle
$P_{k,0}$ over $N$ such that its fiber over a point $\gamma$ is the
set of tuples $(\gamma,\vf,(e_1,e_2))$, where $\vf$ is a projective
parametrization on the leaf (the abnormal extremal) $\gamma$ and
$(e_1,e_2)$ is a basis of $J^{(-k+1)}\bigl(\vf^{-1}(0)\bigr)$. By
construction, $P_{k,0}$ is a principle bundle with a structure group
$\text {SL}(2,\mathbb R)\times \text{GL}(2,\mathbb R)$.

To any point $\Gamma\in P_{k,0}$, $\Gamma=
\bigl(\gamma,\vf,(e_1,e_2)\bigr )$, assign the frame $\mathfrak
F_1(\Gamma)$ on the space $\Delta(\gamma)$ which is equal to the
value at $0$ of normal w.r.t $\vf$ moving frame of the curve
\eqref{flag1} generated by the pair of horizontal sections equal to
$(e_1,e_2)$ at $0$.
Actually, $\mathfrak F_1$ maps the fiber of $P_{k,0}$ over $\gamma$
to the space of all frames on $\Delta(\gamma)$. It is easy to see
that this mapping is an injective immersion. So, the fiber of
$P_{k,0}$ over $\gamma$ can be identified with its image under
$\mathfrak F_1$ and we can look on $P_{k,0}$ as on a bundle of
frames on the
contact distribution $\Delta$ on $N$.

\subsection{Curves of flags with non-rectangular diagram}
\label{ss:nrect} \indent

In this case $\dim J^{(i)}(\lambda)=\dim J^{(i)}(\lambda)+1$ for any
$-k-l+1\leq i\leq -k$, while $J^{(-k-l)}=0$ and $\dim
J^{(-k+1)}(\lambda)=\dim J^{(-k)}(\lambda)+2$. Take a nonzero
section $e$ of $J^{(-k-l+1)}$ and  a nonzero section  $f$ of
$J^{(-k+1)}$ such that
$J^{(-k+1)}(\lambda)=\bigl(J^{(-k)}\bigr)^{(1)}(\lambda)\oplus
\{\mathbb R f(\lambda)\}$. A pair $(e,f)$ is said to be a \emph{
principal pair of sections of the curve \eqref{flag1}}. As before,
fix a parametrization $\varphi:\gamma\mapsto\mathbb R$ of $\gamma$.
By Proposition \ref{prop3} the whole curve of flags \eqref{flag1}
can be recovered from the sections $e$ and $f$ by differentiation
and the tuples
\begin{equation}
\label{tuplei0} \bigl(e(t), e'(t),\ldots, e^{(2k+l-2)}(t), f(t),
f'(t),\ldots, f^{(2k+l-2)}(t)\bigr)
\end{equation}
constitute a moving frame in $\Delta(\gamma)$. This moving frame is
said to be \emph{corresponding to the pair $(e,f)$ and
parametrization $\vf$}.
Fix one symplectic form $\sigma$ from the one-parametric
family of symplectic forms on $\Delta(\gamma)$. 

We start with the following
\begin{defin}
\label{quasisymp} A frame $\Bigl(e_1,\ldots, e_{2k+l-1},f_1,\ldots,
f_{2k+l-1}\Bigr)$ of the symplectic space
$(\Delta_\gamma,\widetilde\sigma)$ is said to be
$(k,l)$-quasisymplectic if the following conditions hold:
\begin{enumerate}
\item
$\sigma(e_i, e_j)=0$ for all  $i+j\leq 2k+2l$, $\sigma(e_i, f_j)=0$
for all  $i+j\leq 2k+l-1$, $\sigma(f_i, f_j)=0$ for all $i+j\leq
2k$;
\item $\sigma(f_i,e_{2k+l-i} )=(-1)^{i-k}$ for all $1\leq
i\leq 2k+l-1$;
\item $\sigma(f_i,e_{2k+l+1-i} )=0$ for all $2\leq i\leq
2k+l-1$;
\item$ \sigma(f_{k+i},f_{k+i+1})=0$ for all $0\leq i\leq
l$.
\end{enumerate}
\end{defin}

As before, first we will fix a parametrization
$\varphi:\gamma\mapsto\mathbb R$ of $\gamma$. Note that if $(e,f)$
and $(\widetilde e, \widetilde f)$ are two principal pairs of
sections of the curve \eqref{flag1}, then they are related as
follows:
\begin{equation}
\label{relpr} \widetilde e(t)=\alpha_1(t)e(t), \quad \widetilde
f(t)=\alpha(t)f(t)+\sum_{i=1}^{l+1}\beta_i(t)e^{(i-1)}(t)
\end{equation}
for some functions $\alpha$, $\alpha_1$, $\beta_1, \beta_2,\ldots,
\beta_{l+1}$, where $\alpha(t)\neq 0$ and $\alpha_1(t)\neq 0$ for
any $t$.

A principal pair $(e,f)$ of sections of the curve \eqref{flag1} is
called a \emph{normal pair of sections associated with the
parametrization $\vf$ and the symplectic form $\sigma$}, if the
corresponding moving frame \eqref{tuplei0} is
$(k,l)$-quasisymplectic for any $t$. In this case the moving frame
\eqref{tuplei0} is said to be \emph{normal moving frame of the curve
\eqref{flag1}, generated by the normal pair of sections $(e,f)$  and
associated with the parametrization $\vf$ and the symplectic form
$\sigma$}.
We also say that a germ $(e,f)$ of a pair of sections at a point
$\lambda_0\in\gamma$ and the corresponding germ of a moving frame
\eqref{tuplei0} at $\lambda$ is \emph{normal}
if their representatives are normal for a restriction of the curve
(\ref{flag1}) on a neighborhood of $\lambda_0$.

\begin{thm}
\label{propi0} For any point $\lambda_0\in\gamma$ the set of normal
germs at $\lambda_0$ of a pair  of sections associated with the
parametrization $\vf$ and the symplectic form $\sigma$ is not empty.
If $(e,f)$ is a germs at $\lambda_0$ of a pair  of sections
associated with the parametrization $\vf$ and the symplectic form
$\sigma$, then given a nonzero real number $C$ and a set of real
numbers $\left\{a_{r,i}: 0\leq r\leq \left[\frac{l}{2}\right], 0\leq
i\leq 2l-4r\right\}$ there exists a unique normal germ of a pair of
sections $(\widetilde e,\widetilde f)$ associated with the
parametrization $\vf$ and the symplectic form $\sigma$ such that
 $(e,f)$ and $(\widetilde
e,\widetilde f)$ are related by formulas \eqref{relpr} with
\begin{equation}
\label{initial} \alpha(t_0)=C,\quad
\beta_{l+1-2r}^{(i)}(t_0)=a_{i,r},\quad \forall\, 0\leq r\leq
\left[\frac{l}{2}\right], 0\leq i\leq 2l-4r,
\end{equation}
where $t_0=\vf(\lambda_0)$.
\end{thm}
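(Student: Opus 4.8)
The plan is to prove existence and uniqueness of normal pairs by solving a sequence of first-order ordinary differential equations and algebraic normalization conditions arising from the quasi-symplecticity requirements in Definition \ref{quasisymp}. I would work with a fixed principal pair $(e,f)$ and parametrization $\vf$, and look for the normalizing pair $(\widetilde e,\widetilde f)$ in the form \eqref{relpr}, treating the scalar functions $\alpha, \alpha_1, \beta_1,\dots,\beta_{l+1}$ as unknowns. Differentiating the relations \eqref{relpr} repeatedly and plugging into the moving frame \eqref{tuplei0} expresses each vector $\widetilde e^{(j)}$ and $\widetilde f^{(j)}$ as a combination of the original frame vectors $e^{(i)},f^{(i)}$ with coefficients that are polynomial in the unknown functions and their derivatives.

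Next I would impose the four families of conditions of Definition \ref{quasisymp} and analyze them by increasing value of the index sum $i+j$. My expectation is that these conditions split into two kinds. The normalization of the symplectic pairings $\sigma(\widetilde f_i,\widetilde e_{2k+l-i})=(-1)^{i-k}$ in condition (2), together with conditions (1) and (3), should fix $\alpha_1$ algebraically in terms of $\alpha$ (so that the leading pairing is correctly normalized) and then successively determine the derivatives of the $\beta_i$, turning most of the vanishing requirements into explicit first-order linear ODEs for the coefficient functions. The function $\alpha$ itself together with a distinguished subset of the $\beta_i$ should remain free as initial data, which is exactly the content of the parameter count in \eqref{initial}: the free constant $C$ is the initial value $\alpha(t_0)$, and the numbers $a_{r,i}$ record the free initial jets of $\beta_{l+1-2r}$ up to order $2l-4r$. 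I would verify that the number of these free parameters matches the dimension of the stabilizer of a point of the flat curve, consistent with the structure of the reparametrization group \eqref{relpr}.

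The main obstacle, and the delicate bookkeeping of the proof, will be the condition (4), $\sigma(\widetilde f_{k+i},\widetilde f_{k+i+1})=0$ for $0\le i\le l$, which couples the $f$-sections to themselves rather than to the $e$-sections. These are the conditions responsible for the even/odd splitting visible in the indices $l+1-2r$ of \eqref{initial}: I expect that imposing them produces ODEs that determine the even-indexed (or odd-indexed) $\beta_j$ in terms of the complementary family, while leaving the other family's initial jets free. Concretely, differentiating $\sigma(\widetilde f_{k+i},\widetilde f_{k+i+1})$ and using the antisymmetry of $\sigma$ should give, after the lower-order conditions are already imposed, a linear differential relation among the $\beta_j$ whose solvability requires a careful induction on $i$. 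The key technical point is to show this system is triangular (each new condition involves one genuinely new highest-derivative unknown) so that it can be solved step by step with no obstruction, yielding exactly the stated freedom.

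Finally I would assemble these solved ODEs into a single first-order system in the unknowns $(\alpha,\beta_1,\dots,\beta_{l+1})$ with the normalized $\alpha_1$ substituted in. Existence and uniqueness of the normal germ then follow from the standard existence-uniqueness theorem for ODEs, with the free initial data being precisely $\alpha(t_0)=C$ and the jets $\beta_{l+1-2r}^{(i)}(t_0)=a_{i,r}$. Nonemptiness of the set of normal germs is the existence half of this conclusion applied to any choice of initial data, so both assertions of the theorem follow simultaneously.
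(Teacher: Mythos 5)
Your overall strategy coincides with the paper's: substitute the ansatz \eqref{relpr} into the quasi-symplecticity conditions of Definition \ref{quasisymp}, observe that conditions (1) and (2) come for free or fix things algebraically, extract an ODE for $\alpha$ from condition (3) (this is Lemma \ref{alphaeqlem} and equation \eqref{alphaeq} in the paper), and reduce condition (4) to a solvable differential system for the $\beta_i$ closed off by the standard existence-uniqueness theorem. The parameter count you give also matches \eqref{initial}.

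However, there is a genuine gap at exactly the point you flag as "the key technical point." The system coming from condition (4) is \emph{not} triangular: the equation obtained from $\sigma(\widetilde f^{(k+j-1)},\widetilde f^{(k+j)})=0$ contains the leading derivatives $\beta_i^{(2j+i-l)}$ of \emph{all} the unknowns $\beta_1,\dots,\beta_{l+1}$ simultaneously, with coefficients $\binom{k+j-1}{2j+i-l}+\binom{k+j}{2j+i-l}$ (Lemma \ref{systlem}, equation \eqref{eqj}). Each new condition therefore brings in $l+1$ new highest-order derivatives at once, not one, and there is no step-by-step elimination "with no obstruction." The heart of the paper's proof is Lemma \ref{detlem}(a): the non-vanishing of the determinants of the submatrices $\mathfrak B_p$ of this binomial-coefficient matrix, proved by a nontrivial induction (Statement \ref{inds}) involving explicit factorization identities. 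Only after this is established can one run Gaussian elimination to decouple the system into the algebraic relations \eqref{eqmod2} for the $\beta_{l-2r}$ and the higher-order ODEs \eqref{eqmod1} for the $\beta_{l+1-2r}$, which is what produces the even/odd split and the specific jet orders $2l-4r$ in the initial data. Your proposal asserts solvability where the actual argument requires this determinant computation; as written, the existence and the exact freedom claimed in \eqref{initial} do not follow. A secondary imprecision: the resulting equations for $\beta_{l+1-2r}$ are of order $2l-4r+1$, and the complementary family $\beta_{l-2r}$ is determined algebraically (order zero), not by first-order ODEs, though your final reduction to a first-order system repairs the bookkeeping once the decoupled form is in hand.
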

\begin{proof}
The moving frame \eqref{tuplei0}, corresponding to a principal tuple
$(e,f)$  satisfies condition of Definition \ref{quasisymp}
automatically: the first condition follows from the fact that spaces
$J^{(-i)}(\lambda)$ are isotropic  for $i\geq 1$ and from
\eqref{contrcurve}.
If a nonzero section $e$ of $J^{(-k-l+1)}$ is fixed, then the
condition $\sigma\bigl(f^{(k-1)}(t),e^{(k+l-1)}(t)\bigr)\equiv 1$
fixes $f(t)$ modulo $(J^{(-k)})^{(1)}(\vf^{-1}(t))$ for a section
$f$ such that $(e,f)$ is a principal pair associated with the
parametrization $\vf$ and the symplectic form $\sigma$. From here
and the fact that $J^{(-1-i)}(\lambda)=\bigl(J^{(i)}(\lambda)
\bigr)^\angle$ it follows by differentiations that
$\sigma\bigl(f^{(i-1)}(t),e^{(2k+l-i-1)}(t)\bigr)\equiv (-1)^{i-k}$,
i.e. condition (2) of Definition \ref{quasisymp} holds for the
corresponding moving frame \eqref{tuplei0}. Any two principal pairs
$(e,f)$ and $(\widetilde e, \widetilde f)$  such that the
corresponding moving frames satisfy condition (2) of Definition
\ref{quasisymp} are related as follows
\begin{equation}
\label{relpr3} \widetilde e(t)=\frac{1}{\alpha(t)}e(t), \quad
\widetilde f(t)=\alpha(t)f(t)+\sum_{i=1}^{l+1}\beta_i(t)e^{(i)}(t)
\end{equation}
for some functions $\alpha$, $\beta_1, \beta_2,\ldots, \beta_{l+1}$,
where $\alpha(t)\neq 0$ for any $t$.

By direct computations one can easily get the following
\begin{lem}
 \label{alphaeqlem}
The condition $\sigma\bigl(\widetilde f^{(k-1)}(t),\widetilde
e^{(k+l)}(t)\bigr)\equiv 0$
 is equivalent to the following relation:
\begin{equation}
\label{alphaeq}
(2k+l-1)\alpha'(t)=\sigma\bigl(f^{(k-1)}(t),e^{(k+l)}(t)\bigr)\alpha(t)+
\sigma\bigl(e^{(k+l-1)}(t),e^{(k+l)}(t)\bigr) \beta_{l+1}(t).
\end{equation}
\end{lem}


It is convenient to introduce the notion of weights for sections
$e$, $\widetilde e$, $f$, $\widetilde f$  and their derivatives,
functions $\alpha$, $\beta_i$ and their derivatives,  symplectic
products of sections, produced by multiplications of these functions
on these sections,
and products of all of the above.
Namely, we set
\begin{equation}
\label{degef} \deg e^{(i)}=\deg \widetilde e^{(i)}=-k-l+i+1,\quad
\deg f^{(i)}=\deg \widetilde f^{(i)}=-k+i+1
\end{equation}
Further, set
\begin{equation}
\label{weightb} \deg\beta_i^{(s)}\stackrel{def}{=}l+1-i+s, \quad
\deg \alpha^{(s)}\stackrel{def}{=}s.
\end{equation}
Then $\deg$ of any object as above will be called the \emph{ weight
or the degree} of this object.
Finally, the weight of the product of objects above is by definition
a sum of weights of all its factors and for any pair of sections
$v_1 ,v_2$, obtained by multiplication of functions from
\eqref{weightb} on sections from \eqref{degef}, we set
$\deg\sigma(v_1,v_2)=\deg v_1+\deg v_2$. The fact that
$J^{(-1+i)}=(J^{(i)})^\angle$ can be written in terms of weights as
follows: for any pair of sections $v_1$ and $v_2$ as above
\begin{equation}
\label{negweight} \sigma (v_1,v_2)=0,\quad \text{ if } \deg v_1+\deg
v_2<0. \end{equation} The reason for the definition of weights
is that we want the righthand sides of relations in \eqref{relpr3}
and their derivatives to be homogeneous of the same degree as the
lefthand sides and their corresponding derivatives.

Now assume that $(e,f)$ and $(\widetilde e , \widetilde f)$ are two
principal pairs associated with parametrization $\vf$ and the
symplectic form $\sigma$ such that the corresponding moving frames
satisfy Conditions (1) and (2) of Definition \ref{quasisymp} and
they are related by \eqref{relpr3}.
Consider the following tuple of sections
\begin{equation}
S_i=\left\{f(t),\ldots,f^{(i)}(t),e(t),e'(t),\ldots,
e^{(i+l)}(t)\right\}
\end{equation}
\begin{lem}
\label{systlem} The condition $\widetilde\sigma(\widetilde
f^{(k+j-1)}(t), \widetilde f^{(k+j)}(t))=0$ can be expressed as the
following differential equation  w.r.t. the functions $\beta_i$:

\begin{equation}
\label{eqj}
\alpha(t)\sum_{i=1}^{l+1}\left(\begin{pmatrix} k+j-1\\
2j+i-l\end{pmatrix}+\begin{pmatrix}
k+j\\2j+i-l\end{pmatrix}\right)\beta_i^{(2j+i-l)}(t)= \Phi_j
\end{equation}
where  $\Phi_j$ is a polynomial expression w.r.t. the functions
$\{\beta_i^{(s)}(t)\}_{1\leq i\leq l+1,0\leq s<2j+i-l}$,
$\{\alpha^{(i)}(t)\}_{i=0}^{\min\{k+j,2j+1\}}$, and  symplectic
products with positive weights  of pairs of sections from
$S_{k+j-1}\times S_{k+j}$. Moreover, the monomials of $\Phi_j$ are
quadratic w.r.t. the functions $\{\beta_i^{(s)}(t)\}_{1\leq i\leq
l+1,0\leq s<k+j}$, $\{\alpha^{(i)}(t)\}_{i=0}^{k+j}$ and the weights
of these monomials are equal to $2j+1$.\footnote{Binomial
coefficients $\begin{pmatrix}n\\k\end{pmatrix}$ with $n<k$ are
supposed to be equal to zero in \eqref{eqj}.}
\end{lem}

\begin{proof}
Replacing  $\widetilde f^{(k+j)}(t)$ and  $\widetilde
f^{(k+j-1)}(t)$ in  $\sigma(\widetilde f^{(k+j-1)}(t), \widetilde
f^{(k+j)}(t))$, $j\geq 0$, by their expression in terms of the
moving frame associated with the pair $(e,f)$, functions $\beta_i$
and $\alpha$ from \eqref{relpr3}, we get certain polynomial
expression w.r.t. the functions $\{\beta_i^{(s)}(t)\}_{1\leq i\leq
l+1,0\leq s<2j+i-l}$,
$\{\alpha^{(i)}(t)\}_{i=0}^{\min\{k+j,2j+1\}}$, and  symplectic
products with positive weights  of pairs of sections from
$S_{k+j-1}\times S_{k+j}$. The monomials of $\Phi_j$ are quadratic
w.r.t. the functions $\{\beta_i^{(s)}(t)\}_{1\leq i\leq l+1,0\leq
s<k+j}$ and $\{\alpha^{(i)}(t)\}_{i=0}^{\min\{k+j,2j+1\}}$. By
\eqref{negweight}, these monomials are nonzero if and only if their
coefficients have nonnegative weight. Besides the weights of these
monomials  are equal to $$\deg
\sigma(\widetilde f^{(k+j-1)}(t), \widetilde f^{(k+j)}(t))=2j+1.$$
Therefore, if $\beta_i^{(s)}(t)$ is contained in one of such nonzero
monomials, then $\deg
\beta_i^{(s)}(t)$  is not greater than $2j+1$. From this and
\eqref{weightb} it follows that
\begin{equation}
\label{s} s\leq 2j+i-l
\end{equation}
If we consider the monomial of $\sigma(\widetilde f^{(k+j-1)}(t),
\widetilde f^{(k+j)}(t))$ containing the maximal possible derivative
of the function $\beta_i(t)$, i.e. $\beta_i^{(2j+i-l)}(t)$, then the
weight of the other factors in this monomial has to be equal to $0$.
Besides, in the expression of $\widetilde f^{(k+j)}(t)$ the function
$\beta_i^{(2j+i-l)}(t)$ appears only near $e^{(k+l-j-1)}(t)$ , while
in the expression of $\widetilde f^{(k+j-1)}(t)$ it appears only
near $e^{(k+l-j-2)}(t)$. Note that the only pair of sections in
$S_{k+j-1}(t)\times\{e^{(k+l-j-1)}(t)\}$ having symplectic product
of weight $0$ is the pair $\bigl( f^{(k+j-1)}(t),
e^{(k+l-j-1)}(t)\bigr)$ and the only pair of sections in
$\{e^{(k+l-j-2)}(t)\}\times S_{k+j}(t)$ having symplectic product of
weight $0$ is  $\bigl(e^{(k+l-j-2)}(t), f^{(k+j)}(t)\bigr)$. In the
both cases the symplectic products are equal to $(-1)^j$ and the
additional factor in the corresponding monomials is equal to
$\alpha(t)$ (coming from the term  $\alpha(t)f^{(k+j-1)}(t)$ in the
expression of $\widetilde f^{(k+j-1)}(t)$ and from the term
$\alpha(t)f^{(k+j)}(t)$ in the expression of $\widetilde
f^{(k+j)}(t)$ respectively). From the Leibnitz rule it is not
difficult to get from here that in the expression of
$\sigma(\widetilde f^{(k+j-1)}(t), \widetilde f^{(k+j)}(t))$ in
terms the moving frame associated with thew pair $(e,f)$ the
monomial, containing $\beta_i^{(2j+i-l)}(t)$, has the form
\begin{equation}
\label{monomax}
(-1)^j\alpha(t)\left(\begin{pmatrix} k+j-1\\
2j+i-l\end{pmatrix}+\begin{pmatrix}
k+j\\2j+i-l\end{pmatrix}\right)\beta_i^{(2j+i-l)}(t).
\end{equation}
This completes the proof of the lemma.
\end{proof}

Let $\mathfrak B$ be a $(l+1)\times (l+1)$-matrix such that
$\mathfrak B_{j+1,i}=\begin{pmatrix} k+j-1\\
2j+i-l\end{pmatrix}+\begin{pmatrix} k+j\\2j+i-l\end{pmatrix}$ for
$0\leq j\leq l$, $1\leq i\leq l+1$. For any $0\leq p\leq
\left[\frac{l}{2}\right]$ let $\mathfrak B_p$ be a
$(l+1-2p)\times(l+1-2p)$-matrix obtained from $\mathfrak B$ by
erasing the last $2p$ columns, the first  $p$ rows, and the last $p$
rows.

\begin{lem}
\label{detlem}
\begin{itemize}
\item[a)]
${\rm det} \mathfrak B_p\neq 0$
for any $0\leq p\leq \left[\frac{l}{2}\right]$;

\item [b)]The system of differential equations \eqref{eqj} with $0\leq j\leq l$ w.r.t. the functions
$\{\beta_i(t)\}_{i=1}^{l+1}$ is equivalent to the system of
equations
of the following form
\begin{eqnarray}
&~&\label{eqmod1}
\beta_{l+1-2r}^{(2l-4r+1)}(t)=\frac{1}{\alpha(t)}\Psi_r\left(\{\beta_{l+1-2j}^{(s)}(t)\}_{0\leq
j\leq [l/2],0\leq s\leq 2l-2j-\max\{2j, 2r-1\}},
\alpha(t)\right), \quad 0\leq r\leq [l/2],\\
&~&\beta_{l-2r}(t)=\frac{1}{\alpha(t)}\Theta_r\left(\{\beta_{l+1-2j}^{(s)}(t)\}_{0\leq
j\leq [l/2],0\leq s\leq 2r+1-2j}, \alpha(t)\right), \quad 0\leq
r\leq [(l-1)/2]\label{eqmod2}
\end{eqnarray}
where $\Psi_r$ and $\Theta_r$ are polynomial expressions w.r.t.
their arguments
such that the weights of each of their monomials are equal to
$2l-2r+1$ and $2r+1$ respectively.
\end{itemize}
\end{lem}
\begin{proof}
{\bf a)} First, by direct computations,
\begin{equation}
\label{trans1}
\begin{pmatrix} k+j-1\\
2j+i-l\end{pmatrix}+\begin{pmatrix} k+j\\2j+i-l\end{pmatrix}
=\frac{2k-i+l}{k+j}\begin{pmatrix} k+j\\2j+i-l\end{pmatrix}.
\end{equation}

Let $\mathfrak M^{k,l}$ be a $(l+1)\times(l+1)$-matrix such that its
$(j+1,i)$th entry is equal to $\begin{pmatrix}
k+j\\2j+i-l\end{pmatrix}$ or, equivalently,
\begin{equation}
\label{mentry} \bigl(\mathfrak M^{k,l}\bigr)_{j,i}=\begin{pmatrix}
k+j-1\\2j+i-l-2\end{pmatrix}.
\end{equation}
Similarly to above, for any $0\leq p\leq \left[\frac{l}{2}\right]$
let $\mathfrak M^{k,l}_p$ be a $(l+1-2p)\times(l+1-2p)$-matrix
obtained from $\mathfrak B$ by erasing the last $2p$ columns, the
first  $p$ rows, and the last $p$ rows. Since in the factor
$\frac{2k-i+l}{k+j}$ in the formula \eqref{trans1} the indices $i$
and $j$ are separated, it is sufficient to prove the statement a) of
the lemma for the matrices $\mathfrak M^{k,l}_p$ instead of
$\mathfrak B_p$. Besides, by definition,  it is not hard to see that
\begin{equation}
\label{m1} \mathfrak M^{k,l}_p=\mathfrak M^{k+p,l-2p}.\end{equation}
Denote
\begin{equation}
\label{ckl} c(k,l)=\frac{(-1)^l
l!}{(2l-1)!!(2l+1)!}\prod_{r=1}^l(2k+2r-1)\prod_{r=0}^l(k+r).
\end{equation}
We will prove that
\begin{equation}
\label{m2} \mathfrak M^{k,l}=c_{k,l}  \mathfrak M^{k+1,l-2}
\end{equation}
The last formula will imply that $\mathfrak
M^{k,l}=\displaystyle{\prod_{r=0}^{[\frac{l}{2}]}c_{k+r, l-2r}}$,
which together with \eqref{m1} and \eqref{ckl} will imply the
statement a) of the lemma. Formula \eqref{m2} will follow in turn
from the following statement

\begin{stat}
\label{inds} For any $s\in\{1,\ldots l\}$ the last column of the
matrix $\mathfrak M^{k,l}$ can be transformed by a series of
elementary matrix transformations such that the $(j, l+1)$th entry
of the transformed matrix  is equal to
\begin{equation}
\label{dkls} d(k,s,j)=\frac{(-1)^s
}{(2s-1)!!(2j-1)!}\prod_{r=1}^s(j-r)
(2k+2r-1)\prod_{r=1}^{2j-s-1}(k-j+s+r)
\end{equation}
(while all other columns remain as in  $\mathfrak M^{k,l}$).
\end{stat}
In particular, $d(k,s,j)=0$ for $1\leq j\leq s$ and
$d(k,l,l+1)=c(k,l)$. These facts together with \eqref{m1} will
easily imply \eqref{m2}.

So, to complete the proof of the statement a) of Lemma \ref{detlem}
it remains to prove Statement \ref{inds}. We do it by induction
w.r.t. $s$. To get Statement \ref{inds} for $s=1$ we subtract the
$l$th column multiplied by $k$ from the $(l+1)$th column. Namely by
direct computations, one has $$\bigl(\mathfrak
M^{k,l}\bigr)_{j,l+1}-k \bigl(\mathfrak
M^{k,l}\bigr)_{j,l}=d(k,1,j).$$ Now assume that Statement \ref{inds}
is true for $s=s_0<l$ and prove it for $s=s_0+1$. We work with the
matrix transformed from the matrix $\mathfrak M^{k,l}$ as in
Statement \ref{inds} for $s=s_0$. Using \eqref{m1} and induction
hypothesis of Statement \ref{inds}, applied for the matrix
$\mathfrak M^{k+1,l-2}$ and $s=s_0-1$, we will get that the
$(l-1)$th row of the matrix $\mathfrak M^{k,l}$ can be transformed
by a series of elementary matrix transformations such that its $(j,
l-1)$th entry is equal to $d(k+1, s_0-1, j-1)$ for all $2\leq j\leq
l$, while the $(1,l-1)$th entry of the transformed matrix is equal
to $0$, because $\bigl(\mathfrak M^{k,l}\bigr)_{1,i}=0$ for all
$1\leq i\leq l-1$. After all these transformations, in order to get
Statement \ref{inds} for $s=s_0+1$ we add the $(l-1)$th column of
the obtained matrix multiplied by
$$-\frac{d(k, s_0,
s_0+1)}{d (k+1,s_0-1,s_0)}=\frac{(2k+1)k}{2(4s_0^2-1)}$$ to its
$(l+1)$th row. Namely, by direct calculations, one has the following
identity
$$d(k,s_0,j)+\frac{(2k+1)k}{2(4s_0^2-1)} \,d(k+1,s_0-1, j-1)=d(k,
s_0+1, j),$$ which implies Statement \ref{inds} for $s=s_0+1$ (note
that the transformations we made with the $(l-1)$th column can be
turned back). With this the proof of statement a) of Lemma
\ref{detlem} is completed

{\bf b)} Consider the system of differential equations \eqref{eqj}
with $0\leq j\leq l$ w.r.t. the functions
$\{\beta_i(t)\}_{i=1}^{l+1}$. If for given $j_1<j_2$ we
differentiate $2(j_2-j_1)$ times equation \eqref{eqj} with $j=j_1$
and then subtract the obtained equation multipliead by a constant
from equation \eqref{eqj} with $j=j_2$, then we obtain a system of
equations\begin{equation} \label{eqj1}
\alpha(t)\sum_{i=1}^{l+1}(\widetilde{\mathfrak
B})_{j,i}\beta_i^{(2j+i-l)}(t)=
\widetilde\Phi_j\left(\{\beta_i^{(s)}(t)\}_{1\leq i\leq l+1,0\leq
s<2j+i-l}, \{\alpha^{(i)}(t)\}_{i=0}^{\min\{k+j,2j+1\}}\right),\quad
0\leq j\leq l,
\end{equation}
where  the matrix $\widetilde{\mathfrak B}$ is obtained from the
$\mathfrak B$ by subtraction of $j_1$th row multiplied by the same
constant from the $j_2$th rows of the latter and the functions
$\widetilde \Phi_j$ have the properties similar to the properties of
functions $\Phi_j$ from Lemma \ref{systlem}. Therefore the system
\eqref{eqj} with $0\leq j\leq l$ is equivalent to the system of the
type \eqref{eqj1} with matrix $\widetilde{\mathfrak B}$ obtained
from the matrix $\mathfrak B$ by a series of elementary
transformations with its rows. In this way we apply the Gauss
algorithm  first to the $(l-2p)$th column of $\mathfrak B$ killing
all entries below  the $(p+1,l-2p)$th entry step by step starting
with $p=0$ and ending up with $p=\left[\frac{l-1}{2}\right]$. This
is possible, because $(\mathfrak B)_{p+1,l-2p}=1$. Further, from the
statement a) of the lemma it follows that we can apply the Gauss
algorithm to the $\left(2\left\{\frac{l-1}{2}\right\}+2p\right)$th
column killing all entries below  the
$\left(\left[\frac{l+3}{2}\right]+p,2\left\{\frac{l-1}{2}\right\}+2p\right)$th
entry step by step starting with $p=0$ and ending up with
$p=\left[\frac{l}{2}\right]$. Besides the
$\left(\left[\frac{l+3}{2}\right]+p,2\left\{\frac{l-1}{2}\right\}+2p\right)$th
entries of the matrix, obtained in this way, are not equal to zero.
Therefore, the system of equations \eqref{eqj} is equivalent to the
following one:
\begin{equation}
\begin{split}
\label{eqmod3}&\beta_{l-2r_1}(t)=\frac{1}{\alpha(t)}\widetilde
\Theta_r\left(\{\beta_{l+1-2j}^{(2(r_1-j)+1)}(t)\}_{j=0}^{r_1},
\{\beta_i^{(s)}(t)\}_{1\leq i\leq l+1,0\leq s<2r_1+i-l}, \{\alpha^{(i)}(t)\}_{i=0}^{2r_1+1}\right),\\
&\beta_{l+1-2r_2}^{(2l-4r_2+1)}(t)=\frac{1}{\alpha(t)}\widetilde
\Psi_r\left(\{\beta_{l+1-2j}^{(2l-2(r_2+j)+1)}(t)\}_{j=0}^{r_2-1},\{\beta_{i}^{(s)}(t)\}_{1\leq
i\leq l+1,0\leq s\leq l-2r_2+i},
\{\alpha^{(i)}(t)\}_{i=0}^{2l-2r_2+1}\right),\\& 0\leq r_1\leq
\left[\frac{l-1}{2}\right], \quad 0\leq r_2\leq
\left[\frac{l}{2}\right],
\end{split}
\end{equation}
Substitute the righthand side of the first relation of
\eqref{eqmod3} with $r_1=1$ instead of $\beta_l(t)$ into all other
equations, then substitute the righthand side of the first relation
\eqref{eqmod3} with $r_1=2$ instead of $\beta_{l-2}(t)$ into
relations with $r_1>2$ and any admissible $r_2$, and so on up to
$r_1=\left[\frac{l-1}{2}\right]$.  Then in all obtained expressions
substitute $\alpha'(t)$ and the higher derivatives of $\alpha(t)$
from the equation \eqref{alphaeq}, then repeat this procedure
recursively until all derivatives of $\alpha(t)$ will be replaced.
In this way we get equations \eqref{eqmod2} for any admissible $r$
and the equation \eqref{eqmod1} for $r=\left[\frac{l}{2}\right]$.
Further, we substitute the righthand side of the second relation of
\eqref{eqmod3} with $r_2=\left[\frac{l}{2}\right]$ instead of
$\beta_1'(t)$ for even $l$ and instead of $\beta_2^{(3)}(t)$ for odd
$l$ into the remaining equations of \eqref{eqmod3}, then substitute
the righthand side of the second relation of \eqref{eqmod3} with
$r_2=\left[\frac{l}{2}\right]-1$ instead of $\beta_1^{(5)}(t)$ for
even $l$ and instead of $\beta_2^{(7)}(t)$ for odd $l$ into the
remaining equations of \eqref{eqmod3}, and so on. Then as before in
all obtained expressions substitute $\alpha'(t)$ and the higher
derivatives of $\alpha(t)$ from the equation \eqref{alphaeq}, then
repeat this procedure recursively until all derivatives of
$\alpha(t)$ will be replaced. In this way we get equations
\eqref{eqmod1} for all remaining admissible $r$. The proof of the
statement b) of Lemma \ref{detlem} is completed
\end{proof}

The statement of Theorem \ref{propi0} immediately follows from the
system of the differential equations w.r.t. $\alpha(t)$ and
$\beta_{l+1-2r}(t)$ with $0\leq r\leq [l/2]$, consisting of equation
\eqref{alphaeq} and all equations \eqref{eqmod1}.
\end{proof}

As in the case of the rectangular diagram  the curve \eqref{flag1}
(or equivalently the curve $\gamma$) can be endowed with the
canonical projective structure, but its construction depends on
vanishing or nonvanishing of certain relative invariant of the
curve.

Fix a symplectic form $\sigma$ from the one-parametric family of
symplectic forms on $\Delta(\gamma)$ and a parametrization
$\varphi:\gamma\mapsto\mathbb R$ of $\gamma$. Let $t=\vf(\lambda)$.
To define the mentioned relative invariant note that by Proposition
\ref{prop3} we have $e^{(i)}(t)\in
J^{(-k-l+i+1)}\bigl(\varphi^{-1}(t)\bigr)$. Moreover,
\begin{equation}
\label{Ji1} J^{(-k-l+i+1)}\bigl(\varphi^{-1}(t)\bigr)={\rm
span}\{e^{(j)}(t)\}_{j=0}^i\quad \forall\, 0\leq i\leq l-1.
\end{equation}
Since spaces $J^{(i)}(\lambda)$ are isotropic for $i<0$ and
$J^{(-1)}=(J^{(0)})^\angle$, we get $\sigma\bigl(e^{(i)}(t),
e^{(i+1)}(t)\bigr)=0$ for all $0\leq i\leq k+l-2$. On the other
hand, the quantity $\sigma\bigl(e^{(k+l-1)}(t), e^{(k+l)}(t)\bigr)$
is not necessary equal to $0$. Set
$$I_0(\lambda)\stackrel{def}{=}\sigma\bigl(e^{(k+l-1)}(t),
e^{(k+l)}(t)\bigr),$$ where $t=\vf(\lambda)$. The quantity
$I_0(\lambda)$ depends on a choice of a nonzero section of
$J^{(-k-l+1)}$, a parametrization of $\gamma$, and a form $\sigma$
but it is just multiplied by a positive scalar when one goes from
one such choice to another one.  So, the quantity $I_0(\lambda)$ is
a well defined relative invariant of a curve \eqref{flag1} at
$\lambda$, i.e. $I_0(\lambda)$ is either zero or nonzero,
independently of  a choice of a nonzero section of $J^{(-k-l+1)}$
and a parametrization of $\gamma$ (moreover, its sign is preserved
as well). Our construction of a canonical projective structure is
different in the cases  $I_0\neq 0$ and $I_0\equiv 0$. However, as
we will see later, this branching in the construction of a canonical
projective structure does not make any influence on the constancy of
the symbol of the obtained frame bundles and therefore on the
prolongation procedure for them.

{\bf a) Canonical projective structure in the case  $I_0=0$.}
Given a normal pair of sections $(e,f)$ associated with a
parametrization $\vf$ of $\gamma$ and the symplectic form $\sigma$
let
\begin{equation}
\label{rho3} \rho_{2, \vf}(\lambda)=\sigma\bigl(e^{(k+l)}(t),
f^{(k)}(t)\bigr), \quad t=\vf(\lambda).
\end{equation}
Note that $\rho_{2, \vf}(\lambda)$ does not depend on a choice of
normal pair $(e,f)$ . The transformation rule of $\rho_{2,\vf}$
under a reparametrization of $\gamma$ is similar to \eqref{rhorep}
Indeed, let $\varphi_1$ be another parametrization and
$\upsilon=\varphi\circ\varphi_1^{-1}$. Then it is not hard to show
that $\rho_{2,\vf}$ and $\rho_{2,\vf_1}$ are related as follows:
\begin{equation*}
 \rho_{2,\vf_1}(\lambda)=\upsilon'(\tau)^2 \rho_{2,\vf}(\lambda)+
 C^2_{k,l}
\mathbb{S}(\upsilon)(\tau),\quad \tau=\vf_1(\lambda)
\end{equation*}
 where, as before,  $\mathbb{S}(\upsilon)$ is the
Schwarzian derivative of $\upsilon$ and $C^2_{k,l}$ is a nonzero
constant.

\emph{The set of all parametrizations $\varphi$ of $\gamma$ such
that
\begin{equation}
\label{A2} \rho_{2,\vf}\equiv 0
\end{equation}
defines the canonical projective structure on $\gamma$}.

{\bf b) Canonical projective structure in the case  $I_0\neq0$.}
As a matter of fact in this case there is much more simple way to
construct distinguished moving frames for a curve of flags
\eqref{flag1}. Indeed, given a parametrization $\vf$ and a
symplectic form $\sigma$ there exists a unique
section $\bar e$ of $J^{(-k-l+1)}$ such that
$$|\sigma\bigl(\bar e^{(k+l-1)}(t),
\bar e ^{(k+l)}(t)\bigr)|\equiv 1,$$ i.e. the absolute value of the
relative invariant $I_0$ for such choice of section of
$J^{(-k-l+1)}$.
is equal to $1$.

\begin{lem}
\label{flemma}Among all sections of  $J^{(-k+1)}$
there is a unique section $\bar f$ such that $(\bar e, \bar f)$ is a
principal pair of section of the curve \eqref{flag1} and the
following relations hold:
\begin{eqnarray}
&~& \sigma\bigl(\bar f^{(k-1)}(t),
\bar e ^{(k+l-1)}(t)\bigr)\equiv 1 \label{norm1},\\
&~& \sigma\bigl(\bar f^{(k-1+j)}(t), \bar e ^{(k+l)}(t)\bigr)=0\quad
\forall\, 0\leq j\leq l\label{norm2}
\end{eqnarray}
\end{lem}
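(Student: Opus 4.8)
The plan is to fix the normalized section $\bar e$ (so that $\sigma(\bar e^{(k+l-1)}, \bar e^{(k+l)}) = \pm 1$) and to parametrize every candidate for $\bar f$. By \eqref{relpr}, once $\bar e$ is fixed, the sections of $J^{(-k+1)}$ forming a principal pair with $\bar e$ are exactly those of the form $\bar f = \alpha f_0 + \sum_{i=1}^{l+1}\beta_i \bar e^{(i-1)}$ with $\alpha(t)\neq 0$, where $(\bar e, f_0)$ is one reference principal pair. As in the proof of Theorem \ref{propi0} I would normalize the reference section $f_0$ so that $\sigma(f_0^{(k-1)}, \bar e^{(k+l-1)}) \equiv 1$; the nonvanishing of this product is precisely the nondegeneracy built into the notion of a principal pair. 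Thus the unknowns are the $l+2$ functions $\alpha, \beta_1, \dots, \beta_{l+1}$, and the $l+2$ scalar conditions \eqref{norm1}--\eqref{norm2} should determine them.

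First I would dispose of \eqref{norm1}. Expanding $\sigma(\bar f^{(k-1)}, \bar e^{(k+l-1)})$ by the Leibniz rule, the contribution of each term $\beta_i \bar e^{(i-1)}$ reduces to products $\sigma(\bar e^{(a)}, \bar e^{(k+l-1)})$; the weight bound \eqref{negweight} forces all of these to vanish except for the top one, namely $\sigma(\bar e^{(k+l-1)}, \bar e^{(k+l-1)}) = 0$ by antisymmetry. Hence the $\beta$-terms drop out entirely and \eqref{norm1} collapses to $\alpha\cdot\sigma(f_0^{(k-1)}, \bar e^{(k+l-1)}) = 1$, i.e. $\alpha\equiv 1$. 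This fixes $\alpha$ and leaves only $\beta_1,\dots,\beta_{l+1}$.

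The heart of the argument is to show that the remaining conditions \eqref{norm2} for $j=0,1,\dots,l$ form a triangular, purely algebraic system for the $\beta_i$. Expanding $\sigma(\bar f^{(k-1+j)}, \bar e^{(k+l)})$ and tracking weights via \eqref{degef}--\eqref{weightb}, I expect that $\beta_i$ can enter the $j$-th equation only when $i\geq l+1-j$, that the new unknown $\beta_{l+1-j}$ enters only at order zero, and that its coefficient is exactly $I_0=\sigma(\bar e^{(k+l-1)}, \bar e^{(k+l)})\neq 0$. Two facts make this work: the order-one coefficient of $\beta_{l+1-j}$ is proportional to $\sigma(\bar e^{(k+l-2)}, \bar e^{(k+l)})$, which vanishes because differentiating the identity $\sigma(\bar e^{(k+l-2)}, \bar e^{(k+l-1)})=0$ (true by \eqref{negweight}) yields $\sigma(\bar e^{(k+l-2)}, \bar e^{(k+l)})=0$; and every strictly lower-index $\beta_i$ with $i\leq l-j$ carries either this same vanishing coefficient or a product of negative weight, so it does not appear at all. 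Consequently the $j$-th equation reads $I_0\,\beta_{l+1-j} = (\text{known data involving } \beta_{l+2-j},\dots,\beta_{l+1} \text{ and their derivatives})$, which determines $\beta_{l+1-j}$ uniquely and algebraically. Solving successively for $j=0,1,\dots,l$ produces $\beta_{l+1},\beta_l,\dots,\beta_1$ in turn, giving both existence and uniqueness of $\bar f$.

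The main obstacle is the precise bookkeeping in this last step: identifying, for each $j$, exactly which derivatives $\beta_i^{(s)}$ survive and computing the leading coefficient. This is the same type of binomial-coefficient and weight analysis carried out in Lemmas \ref{systlem} and \ref{detlem}, combined with the antisymmetry-derived vanishing relations among the products $\sigma(\bar e^{(a)}, \bar e^{(b)})$ obtained by differentiating the low-weight identities. Once the leading coefficient is pinned to the nonzero invariant $I_0$, the triangular structure, and hence unique solvability, are immediate; in contrast with the case $I_0=0$, no differential equation needs to be integrated here.
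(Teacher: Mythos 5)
Your proof is correct and follows essentially the same route as the paper: the paper's argument also determines $\bar f$ one step at a time, observing that \eqref{norm1} fixes $\hat f$ modulo $\bigl(J^{(-k)}\bigr)^{(1)}$ and that the conditions \eqref{norm2} for $0\leq j\leq i$ fix it modulo $J^{(-k-i)}$ (using $I_0\neq 0$), terminating because $J^{(-k-l)}=0$. Your explicit triangular system for the coefficients $\beta_{l+1},\dots,\beta_1$, with leading coefficient $I_0$ pinned down by the weight bookkeeping, is just the coordinate version of that same successive-refinement argument.
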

\begin{proof}
Take a section $\hat f$  of the bundle $J^{(-k+1)}$ such that $(\bar
e, \hat f)$ is a principal pair of section of the curve
\eqref{flag1}.
The condition $\sigma\bigl(\hat f^{(k-1)}(t), \bar e
^{(k+l-1)}(t)\bigr)\equiv 1$ defines $\hat f$ modulo
$\bigl(J^{(-k)}\bigr)^{(1)}$. Further, taking into account that
$I_0\neq 0$, it is easy to see that the condition $\sigma\bigl(\hat
f^{(k-1)}(t), \bar e^{(k+l)}(t)\bigr)=0$ defines $\hat f$ modulo
$J^{(-k)}$. More generally, the conditions
$$\sigma\bigl(\hat f^{(k-1+j)}(t), \bar e ^{(k+l)}(t)\bigr)=0,$$
for  $0\leq j\leq i$, defines $\hat f$ modulo $J^{(-k-i)}$. Our
lemma follows from the fact that $J^{(-k-l)}=0$.
\end{proof}


Take the section $\bar f$ from the previous lemma and let
\begin{equation}
\label{rho2} \rho_{3, \vf}(\lambda)=\sigma\bigl(\bar f^{(k-1)}(t),
\bar f^{(k)}(t)\bigr), \quad t=\vf(\lambda).
\end{equation}
The transformation rule for $\rho_{3,\vf}$ under a reparametrization
of $\gamma$ is similar to \eqref{rhorep}. Indeed, let $\varphi_1$ be
another parametrization and $\upsilon=\varphi\circ\varphi_1^{-1}$.
Then it is not hard to show that $\rho_{3,\vf}$ and $\rho_{3,\vf_1}$
are related as follows:
\begin{equation*}
 \rho_{3,\vf_1}(\lambda)=\upsilon'(\tau)^2 \rho_{3,\vf}(\lambda)+
 C^3_{k.l}
\mathbb{S}(\upsilon)(\tau),\quad \tau=\vf_1(\lambda)
\end{equation*}
 where, as before,  $\mathbb{S}(\upsilon)$ is the
Schwarzian derivative of $\upsilon$ and $C^3_{k,l}$ is a nonzero
constant.
\emph{The set of all parametrizations $\varphi$ of $\gamma$ such
that
\begin{equation}
\label{A3} \rho_{3,\vf}\equiv 0
\end{equation}
defines the canonical projective structure on $\gamma$}.

Now let, as before, $N$ be a manifold  of all leaves of the
characteristic foliation $\C$ in a small neighborhood of a point
$\lambda_0\in\PDD$ such that the linearizations of the flag
$\{\mJ^{(i)}\}$ along any leaf of $\C$ in this neighborhood has the
Young diagram $T$ of type $(k,l)$, where $l>0$. Consider a fiber
bundle $P_{k,l}$ over $N$ such that its fiber over a point $\gamma$
is a set of tuples $\bigl(\gamma, \vf, \sigma, (e,f)\bigr)$, where
$\vf$ is  a projective parametrization
 of the leaf (the abnormal extremal) $\gamma$ , $\sigma$ is a symplectic form
$\sigma$ from the one-parametric family of forms on
$\Delta(\gamma)$, and $(e,f)$ is a germ of normal pair $(e,f)$ of
sections associated with the parametrization $\vf$ and the form
$\sigma$. In contrast to the bundles $P_{k,0}$, this bundle has no
structure of a principal bundle. On the other hand, on each fiber of
this bundle there is a distinguished global frame. The vector fields
of this frame play the role of fundamental vector fields in the case
of principle bundle. They do not constitute a basis of a Lie algebra
but this fact is not important for our further constructions.

Let us construct these vector fields. Note that each fiber of
$P_{k,l}$ over a point $\lambda_0$ is foliated by a corank 3
foliation $\text{Fol}$  such that each its leaf corresponds to a
fixed projective parametrization $\vf$ such that $vf^{-1}(0)\in N$.
First we will construct a global moving frame on each leaf of this
foliation. For this fix a point $\Gamma_0\in\P_{k,l}$,
$\Gamma_0=\bigl(\gamma, \vf, \sigma, (e,f)\bigr)$. For any $0\leq
\bar r\leq \left[\frac{l}{2}\right], 0\leq \bar i\leq 2l-4\bar r$
let $s\to \Gamma_{r,i}(s)$ be a curve on the fiber of $P_{k,l}$ over
$\gamma$ such that the point $\Gamma_{r,i}(s)$ corresponds to the
parametrization $\vf$ and the symplectic form $\sigma$ but the germ
of normal pair $(\widetilde e,\widetilde f)$, corresponding to
$\Gamma_{r,i}(s)$, is related to the pair $(e,f)$ by \eqref{relpr3}
with the the functions $\alpha, \beta_1,\ldots,\beta_{l+1}$
satisfying
\begin{equation}
\label{initial1} \alpha(0)=1,\quad \beta_{l+1-2\bar r}^{(\bar
i)}(0)=\delta_{i,\bar i}\delta_{r,\bar r}s,\quad \forall\, 0\leq
\bar r\leq \left[\frac{l}{2}\right], 0\leq \bar i\leq 2l-4r,
\end{equation}
where $\delta_{i,j}$ is the Kronecker index. It is clear that
$\Gamma_{r,i}(0)=\Gamma_0$. Define the vector field $\mathcal P_{r,
i}$ as follows:  $\mathcal P_{r, i}(\Gamma_0)$ is the velocity of
the curve $\Gamma_{r,i}$ at $s=0$. Further, let $s\to \Xi_1(s)$ be a
curve on the fiber of $P_{k,l}$ over $\gamma$ such that the point
$\Xi_1(s)$ corresponds to the parametrization $\vf$, the symplectic
form $\sigma$, and the germ of normal pair $(\exp^{s}e,\exp ^{-s}f)$
associated with the parametrization $\vf$ and the form $\sigma$.
Obviously, $\Xi_1(0)=\Gamma_0$. Define a vector field $\mathcal Z_1$
such that $\mathcal Z_1(\Gamma_0)$ is the velocity of the curve
$\Xi_1$ at $s=0$. Finally, let $s\to \Xi_2(s)$ be a curve on the
fiber of $P_{k,l}$ over $\gamma$ such that the point $\Xi_2(s)$
corresponds to the parametrization $\vf$, the symplectic form
$\sigma_s=\exp^{-2s} \sigma$, and the germ of normal pair
$(\exp^{s}e, \exp^{s}f)$ associated with the parametrization $\vf$
and the form $\sigma_s$. By construction, $\Xi_2(0)=\Gamma_0$.
Define a vector field $\mathcal Z_2$ such that $\mathcal
Z_2(\Gamma_0)$ is the velocity of the curve $\Xi_1$ at $s=0$. The
tuple of vector fields $\bigl(\{\mathcal P_{r, i}\}_{0\leq r\leq
\left[\frac{l}{2}\right], 0\leq i\leq 2l-4r}, \mathcal Z_1, \mathcal
Z_2\bigr)$ constitute the global frame on each leaf of the foliation
$\text{Fol}$.

To complete it to a frame on the fibers of $P_{k,l}$ first note that
if a point $\Gamma_0=\bigl(\gamma, \vf, \sigma, (e,f)\bigr)$ lies in
$P_{k,l}$ then the points $\Gamma(s)=\bigl(\gamma, \vf(\cdot)-s,
\sigma, (e,f)\bigr)$ belong to $P_{k,l}$ (here we replace the
parametrization $\vf$ by its shift $\vf(\cdot)-s$ for a constant
$s$). Define a vector field $\mathcal X$ such that $\mathcal
X(\Gamma_0)$ is the velocity of the curve $s\to \Gamma(s)$ at $s=0$.
Further there exists a natural action  of the group of real
M\"{o}bius transformations preserving $0$ ($\sim \text{ST}(2,
\mathbb R)$) on the fibers of $P_{k,l}$. Indeed,  take first a
M\"obius transformations $\upsilon$ preserving $0$ and ,as before,
take a point $\Gamma_0 \in P_{k,l}$ such that $\Gamma_0
\sim\bigl(\gamma, \vf, \sigma, (e,f)\bigr)$. By direct computations,
one can show that if the pair $(e(\lambda) ,f(\lambda))$,
$\lambda\in\gamma$, is a normal pair of sections associated with the
parametrization $\vf$ and the symplectic form $\sigma$, then the
pair
\begin{equation*}\bigl(e_\upsilon(\lambda),f_\upsilon(\lambda)\bigr)=
\bigl(\upsilon'(\tau)^{-(k+l/2+1)}e(\lambda),\upsilon'(\tau)^{-(k+l/2+1)}
f(\lambda)\bigr) \end{equation*}
 with $\tau=\upsilon ^{-1}\circ \vf(\lambda)$,
$\lambda\in \gamma$, is a normal pair of sections associated with
the parametrization $\upsilon ^{-1}\circ\vf$ and the symplectic form
$\sigma$. Then we set that $\upsilon$ acts on the fiber of $P_{k,l}$
over $\lambda_0$ by sending $\Gamma_0$ to the point
$\bigl(\lambda_0, \upsilon ^{-1}\circ\vf,
\sigma,(e_\upsilon,f_\upsilon) \bigr)$. This defines the action of
the group $\text{ST}(2,\mathbb R)$  on the fibers of $P_{k,l}$. Then
any choice of a basis $(H, Y)$ of the corresponding Lie algebra
$st(2,\mathbb R)$ defines two more vector fields $\mathcal H$ and
$\mathcal Y$ on the fibers of $P_{k,l}$ which together with the
vector field $\mathcal X$ complete the tuple $\bigl(\{\mathcal P_{r,
i}\}_{0\leq r\leq \left[\frac{l}{2}\right], 0\leq i\leq 2l-4r},
\mathcal Z_1, \mathcal Z_2\bigr)$ to the frame on these fibres.

Now to any point $\Gamma\in P_{k,l}$, where $\Gamma=\bigl(\gamma,
\vf, \sigma, (e,f)\bigr)$ , assign the frame $
{\mathfrak F_2}(\Gamma)$ on $\Delta(\gamma)$ at the point
$\vf^{-1}(0)$ of normal moving frames of the curve \eqref{flag1}
generated by the pair $ (e,f)$ and associated with the
parametrization $\vf$ and symplectic form $\sigma$.
In contrast to the case of rectangular diagram, we cannot claim that
the mapping $\mathfrak F_2$  are injective but we have the following

\begin{prop}
\label{immersion} The mapping $\mathfrak F_2$ is an immersion.
\end{prop}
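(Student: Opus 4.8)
The plan is to prove that $d\mathfrak F_2$ is injective at every point $\Gamma=\bigl(\gamma,\vf,\sigma,(e,f)\bigr)$. Since $\mathfrak F_2$ is fibre-preserving over $N$ and covers its identity (it sends the fibre of $P_{k,l}$ over $\gamma$ into frames on the fixed space $\Delta(\gamma)$), it suffices to show that the restriction of $d\mathfrak F_2$ to the tangent space of the fibre of $P_{k,l}\to N$ is injective. That tangent space is spanned by the fundamental vector fields $\{\mathcal P_{r,i}\}$, $\mathcal Z_1$, $\mathcal Z_2$, $\mathcal X$, $\mathcal H$, $\mathcal Y$ constructed above. Identifying a frame $p$ on $\Delta(\gamma)$ with the isomorphism $V\to\Delta(\gamma)$ carrying the distinguished basis to $p$, the vertical tangent space of the frame bundle at $\mathfrak F_2(\Gamma)$ is identified with $\gl(V)$; thus $d\mathfrak F_2$ sends each fundamental field to an element of $\gl(V)$, and the claim reduces to the linear independence of these elements.

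\textbf{The group fields.} First I would compute the images of the five ``group'' fields by differentiating the normal moving frame \eqref{tuplei0} along the curves defining them. The shift $\mathcal X$ (replacing $\vf$ by $\vf-s$) gives $\frac{d}{ds}e^{(i)}\big|_{0}=e^{(i+1)}$ and likewise for $f$, i.e. the block-diagonal shift operator. The fields $\mathcal Z_1$ and $\mathcal Z_2$, acting by $(\exp^{s}e,\exp^{-s}f)$ and by $(\exp^{s}e,\exp^{s}f)$ together with $\sigma\mapsto\exp^{-2s}\sigma$, give the block scalars $\mathrm{diag}(I,-I)$ and $\mathrm{diag}(I,I)$. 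The M\"obius fields $\mathcal H$ and $\mathcal Y$, which reparametrize $\gamma$ and rescale $(e,f)$ by a power of $\upsilon'$, act within the $e$-part and within the $f$-part of $V$ by the semisimple (grading) and the lowering element of an irreducible $\sll(2,\R)$-action. All five images are block-diagonal with respect to the splitting of $V$ into $e$-part and $f$-part, and they are manifestly independent there: shift, lowering and grading span the $\sll(2,\R)$-component acting identically on both parts, while $\mathrm{diag}(I,\pm I)$ span the two independent block scalars.

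\textbf{The fields $\mathcal P_{r,i}$ and the separation.} The essential point is that each $\mathcal P_{r,i}$ produces, in addition, a nonzero off-diagonal ($f$-part into $e$-part) component that is absent from all five group fields. Indeed, $\mathcal P_{r,i}$ varies the normal pair through \eqref{relpr3} with the initial data \eqref{initial1}, keeping $\vf$ and $\sigma$ fixed; differentiating $\widetilde f=\alpha f+\sum_i\beta_i e^{(i)}$ at $s=0$ produces a term $\sum_i\dot\beta_i e^{(i)}$ that sends $f$-frame vectors into $e$-frame vectors. Hence the off-diagonal block of $d\mathfrak F_2(\mathcal P_{r,i})$ cannot be cancelled by any combination of the block-diagonal group fields, and it remains only to show that these off-diagonal blocks are themselves independent. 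For this I would invoke the weights \eqref{degef}--\eqref{weightb} and the recursive normal-form equations \eqref{eqmod1}--\eqref{eqmod2}: the free datum of $\mathcal P_{r,i}$ is precisely $\beta_{l+1-2r}^{(i)}(0)$ (the parametrization of normal pairs from Theorem~\ref{propi0}), and the normality equations express every remaining $\beta_j$, together with the derivatives of $\alpha$ via \eqref{alphaeq}, in terms of these free data, so that the leading off-diagonal entry of $d\mathfrak F_2(\mathcal P_{r,i})$ is the single contribution $\beta_{l+1-2r}^{(i)}(0)\,e^{(\cdot)}$ in a position determined by $(r,i)$.

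\textbf{Main obstacle.} The hard part is exactly this last step: verifying that the leading off-diagonal terms occupy pairwise distinct positions, so that the family $\{d\mathfrak F_2(\mathcal P_{r,i})\}$ is triangular and hence independent. The difficulty is that the weight $\deg\beta_{l+1-2r}^{(i)}=2r+i$ does \emph{not} separate the pairs $(r,i)$ (for instance $(0,2)$ and $(1,0)$ share weight $2$), so one cannot argue by weight alone; one must track the actual matrix support, using the binomial coefficients of Lemma~\ref{systlem} and the Gauss-elimination structure of Lemma~\ref{detlem}, to see that $(r,i)$ determines \emph{both} the source index (among the $\widetilde f^{(\cdot)}$) and the target index (among the $e^{(\cdot)}$) of the leading entry. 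Once this triangularity is established, the off-diagonal blocks are linearly independent, and together with the block-diagonal images of the group fields they span a subspace of $\gl(V)$ of dimension equal to the number of fundamental fields; therefore $d\mathfrak F_2$ is injective on fibres and $\mathfrak F_2$ is an immersion.
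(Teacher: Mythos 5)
Your overall skeleton matches the paper's: reduce to injectivity of $d\mathfrak F_2$ on the fibres, identify vertical tangent vectors with elements of $\gl(V)$, observe that the images of $\mathcal Z_1,\mathcal Z_2,\mathcal X,\mathcal H,\mathcal Y$ are explicitly computable (the paper records only that they are constant in $\Gamma$), and isolate the linear independence of the images of the $\mathcal P_{r,i}$ as the crux. But that crux is exactly where your proposal stops: you write ``once this triangularity is established\dots'' without establishing it, and the mechanism you propose for it would in fact fail. For two pairs with the same weight, say $(r,i)=(0,2)$ and $(1,0)$, the leading graded parts of ${\mathcal I}_{\mathcal P_{r,i}}$ are, by \eqref{pri}, of the form $f_j\mapsto U_{rij}\,e_{j+l-2r-i}$, i.e.\ they are supported on the \emph{same} band of matrix positions $(j+l-2,\,j)$ for all $j$; the pair $(r,i)$ does not determine a single distinguished source and target index, and there is no triangular arrangement of leading entries to exploit. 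What distinguishes the two operators is the coefficient vector $(U_{rij})_j$ along that band, and proving that these vectors are linearly independent is a genuine computation that your argument does not supply. (A secondary inaccuracy: perturbing only the initial datum $\beta_{l+1-2r}^{(i)}(0)$ does not produce ``a single contribution''; through the coupled normality equations \eqref{alphaeq}, \eqref{eqmod1}--\eqref{eqmod2} it perturbs all the $\beta$'s and $\alpha$, so the off-diagonal block is the full solution of the linearized system, not one entry.)

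The paper closes this gap by a quite different route, which you may want to compare with. It first proves (Lemma~\ref{constsymblem}) that ${\mathcal I}_{\mathcal P_{r,i}(\Gamma)}$ lies in $\gl(V)^{(-2r-i)}$ and that its class modulo $\gl(V)^{(-2r-i-1)}$ is given by \emph{universal} constants $U_{rij}$, independent of the particular curve of flags with the given diagram. This reduces the independence question to the flat curve $\mathfrak F_{k,l}$, where the normality equations become linear, each leaf of the subfoliation $\text{Fol}_1$ carries a natural affine structure, and $\mathfrak F_2$ is affine on it; the dimension count coming from the computation of $\sym_0(\mathfrak F_{k,l})$ in Theorem~\ref{nonrecthm} then shows the image of such a leaf has full dimension, so the restriction of $\mathfrak F_2$ to it is an immersion and the vectors $\{\mathcal P_{r,i}(\Gamma_0)\}$, which span its tangent space, have independent images. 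Constancy of the graded classes then transports this independence back to an arbitrary curve. Some version of this detour through the flat model (or an explicit verification that the vectors $(U_{rij})_j$ for pairs of equal weight are independent) is what your proof is missing.
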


\begin{proof}
Let, as in the Introduction,  $V$ be a vector spaces, endowed with
the filtration  vector \eqref{Vt}
and with the distinguished basis
$(e_1,\dots,e_{2k+l-1},f_1,\dots,f_{2k+l-1})$ satisfying conditions
(1) and (2) of subsection 1.1 b). Then any frame $\Upsilon$ on
$\Delta(\gamma)$ can be identified with the isomorphism $\widehat
\Upsilon:V\to\Delta(\gamma)$ sending the distinguished frame of $V$
to the frame $\Upsilon$. Further, any vector $A$ belonging to the
tangent space at $\Upsilon$ to the set of all frames on
$\Delta(\gamma)$ can be naturally identified with an element
${\mathcal I}_A$ of $\gl(V)$. Indeed, if $s\to\Upsilon(s)$ is a
smooth curve of frames on $\Delta(\gamma)$ such that
$\Upsilon(0)=\Upsilon$ and $\Upsilon '(0)=A$ then let ${\mathcal
I}_A=\widehat {\Upsilon}^{-1}\circ
\frac{d}{ds}\widehat{\Upsilon(s)}|_{s=0}$. So, any vector field $B$
on $P_{k,l}$ tangent to its fibers defines the mapping ${\mathcal
I}_B$ from $P_{k,l}$ to $\gl (V)$ which sends a point $\Gamma_0\in
P_{k,l}$ to the operator ${\mathcal I}_ {d\widetilde {\mathfrak
F}_{3} B(\Gamma_0)}$.

From the constructions it is easy to see that for the vector fields
$\mathcal Z_1$, $\mathcal Z_2$, $\mathcal X$, $\mathcal H$, and
$\mathcal Y$ the corresponding mappings $\mathcal I_{\mathcal Z_1}$,
${\mathcal I}_ {\mathcal Z_2}$, ${\mathcal I}_{\mathcal X}$,
${\mathcal I}_{\mathcal H}$, and ${\mathcal I}_{\mathcal Y}$ are
constant, i.e.
do not depend on points of $P_{k,l}$. This is not the case for the
mappings corresponding to the vector fields $\mathcal P_{r,i}$. On
the other hand, the filtration on $V$ induces a natural filtration
on $\gl(V)$, i.e. a nondecreasing (by inclusion) sequence of
subspaces $\gl(V)^{(i)}$ of $\gl(V)$ such that
\begin{equation}
\label{filtgl} \gl(V)^{(i)}=\{\widehat A\in \gl(V):\text { if } v\in
V^{(j)}\text { then } \widehat A\, v\in V^{(j+i)}\}.
\end{equation}
We say that the operator $\widehat A\in \gl (V)$ has \emph {the
weight (or degree) equal to $i$} if $\widehat A$ is in
$\gl(V)^{(i)}$ but not in $\gl(V)^{(i-1)}$

\begin{lem}
\label{constsymblem} The operators ${\mathcal I}_{\mathcal
P_{r,i}(\Gamma)}$ corresponding to the vector field $\mathcal
P_{r,i}$ have weight equal to $-2r-i$. The equivalence class of the
operators ${\mathcal I}_ {\mathcal P_{r,i}(\Gamma)}$ in
$\gl(V)^{(-2r-i)}/\gl(V)^{(-2r-i-1)}$ does not depend on $\Gamma\in
P_{k,l}$.
\end{lem}

\begin{proof}
Take two points $\Gamma_0, \Gamma\in P_{k,l}$, where $\Gamma_0=
\bigl(\gamma, \vf, \sigma, (e,f)\bigr)$ and $\Gamma= \bigl(\gamma,
\vf, \sigma, (\widetilde e,\widetilde f)\bigr)$. Let
$(e_1(t),\ldots, e_{2k+l-1}(t), f_1(t),\ldots f_{2k+l-1}(t))$ and
$(\widetilde e_1(t),\ldots, \widetilde e_{2k+l-1}(t), \widetilde
f_1(t),\ldots \widetilde f_{2k+l-1}(t))$ be the corresponding moving
frames over $\gamma$, where $t=\vf(\lambda)$, $\lambda\in \gamma$.
Namely, $e_i(t)=e^{(i-1)}(t)$, $\widetilde e_i(t)=\widetilde
e^{(i-1)}(t)$, $ f_i(t)=f^{(i-1)}(t)$, and $ \widetilde
f_i(t)=\widetilde f^{(i-1)}(t)$.
The pairs $(e,f)$ and $(\widetilde e,\widetilde f)$ are related by
\eqref{relpr3} for some functions $\alpha$,
$\beta_1,\ldots,\beta_{l+1}$, where $\alpha(0)=1$.

Let us study how the the vectors $\widetilde f_j(t)$
are expressed by the frame $(e_1(t),\ldots, e_{2k+l-1}(t),
f_1(t),\ldots \\f_{2k+l-1}(t))$.
 For
this first note that $e_{2k+l-1}'(t)\in
J^{(k-1)}\bigl(\vf^{-1}(t)\bigr)$. Indeed, from the condition (1) of
the definition of the $(k,l)$- quasisymplectic frame it follows that
$\sigma\bigl(e_{i}(t), e_{2k+l-1}(t)\bigr)=0$ for any $1\leq i\leq
l+1$. Then by differentiation $\sigma\bigl(e_i(t),
e_{2k+l-1}'(t)\bigr)=0$ for any $1\leq i\leq l$. Recalling that
$J^{(-k)}\bigl(\vf^{-1}(t)\bigr)= {\rm span} \{e_j(t)\}_{j=1}^l$
(see \eqref{Ji1}) and that $(J^{(-k)})^\angle\bigl(\vf^{-1}(t)\bigr)
=J^{(k-1)} \bigl(\vf^{-1}(t)\bigr)$ by \eqref{contrcurve} we get
$e_{2k+l-1}'(t)\in J^{(k-1)}$. It implies in turn that
$e_{2k+l-1}^{(j)}(t)\in J^{(k+j-2)}$.

Assume that for $1\leq j\leq l$
\begin{equation}
\label{deriv0}
e_{2k+l-1}^{(j)}(t)=\sum_{p=1}^{2k+l-1}\xi_{jp}(t)e_p(t)+
\sum_{p=1}^
{2k-2+j
}\zeta_{jp}(t)f_p(t).
\end{equation}
In order to make both sides of \eqref{deriv0} to be homogeneous of
the same degree, in addition to
weights defined above,
let us define weights (or degrees) of functions $\xi_{ji}(t)$ and
$\zeta_{ji}(t)$ and their derivatives as follows:

\begin{equation}
\label{degxz} \deg \xi_{jp}^{(s)}(t)=s+2k+l+j-p-1,\quad \deg
\zeta_{jp}^{(s)}(t)=s+2k+j-p-1.
\end{equation}

Differentiating \eqref{relpr3} $j-1$ times and making, if necessary,
appropriate substitutions from \eqref{deriv0}, we get
\begin{equation*}
\widetilde
f_j(t)=
\alpha(t)f_j(t)+\sum_{p=1}^{j-1}\mu_{jp}(t)f_p(t)+\sum_{p=1}^{2k+l-1}\lambda_{jp}(t)e_p(t)
\end{equation*}
where functions $\lambda_{jp}$
are polynomial expressions with constant coefficients  w.r.t. the
functions $\beta_{s}(t)$, $\xi_{sp}(t)$ and their derivatives,
functions $\mu_{jp}(t)$ are polynomial expressions with constant
coefficients w.r.t. the derivatives of function $\alpha(t)$ and
functions $\beta_{s}(t)$, $\zeta_{sp}(t)$ and their derivatives. In
all these expressions substitute  all functions $\beta_{l-2r}(t)$
(and their derivatives) by the righthand sides of \eqref{eqmod2}
(and their derivatives). Then substitute (also recursively, if
necessary) all functions $\beta_{l+1-2r}^{(2l-4r+1)}(t)$ (and their
derivatives) by the righthand sides of \eqref{eqmod1} (and their
derivatives) and all derivatives of $\alpha(t)$ by the righthand
sides of \eqref{alphaeq} (and their derivatives). After all these
substitutions we will get finally that all function
$\lambda_{jp}(t)$ and $\mu_{jp}(t)$  are
polynomial expressions w.r.t. the functions
\begin{equation}
\label{tupleb} \{\beta_{l+1-2r}^{(i)}(t)\}_{0\leq r\leq
\left[\frac{l}{2}\right],0\leq i\leq 2l-4r}\,\,,
\end{equation}
such that the coefficients of their monomials are in turn polynomial
expressions with universal constant coefficients \footnote{By
universality of the constants we mean that they are the same for any
curve of flags with the fixed Young diagram.} w.r.t. symplectic
products with positive weights of pairs of sections from the set
$\Bigl\{\{e_s(t)\}_{s=1}^{2k+l-1},
\{f_s(t)\}_{s=1}^{2k+l-1}\}\Bigr\}$ and functions $\alpha$
$\xi_{s_1p}$, $\zeta_{s_2p}$, and their derivatives. Note that by
our construction the weights of the functions $\xi_{s_1p}$ and
$\zeta_{s_2p}$ are positive. Therefore by comparison of weights the
function $\beta_{l+1-2r}^{(i)}(t)$ cannot appear in the expression
for $\lambda_{jp}(t)$  with $$\deg e_p(t)-\deg\widetilde
f_j(t)>-\deg\beta_{l+1-2r}^{(i)}(t)=-2r-i.$$ By \eqref{degef} the
latter  is equivalent to the relation $p> j+l-2r-i$. Further, for
$p=j+l-2r-i$ in the polynomial expression of $\lambda_{jp}(t)$
w.r.t. the tuple \eqref{tupleb} take the coefficient of the
monomial, containing only the function $\beta_{l+1-2r}^{(i)}(t)$
(and not its power or other functions from the tuple
\eqref{tupleb}), if it exists.
Then the weight of this coefficient is equal to zero. Therefore this
coefficient is a polynomial w.r.t. the function $\alpha(t)$ with
universal coefficients \footnote{It can be shown that this
coefficient is equal to $\alpha(t)$ multiplied by a constant.}. Let
$U_{rij}$ be the value of this polynomial at $t=0$. Note that this
constant is again universal.

Besides, in the polynomial expression of $\mu_{jp}(t)$ w.r.t. the
tuple \eqref{tupleb} the coefficient of the monomial, containing
only the function $\beta_{l+1-2r}^{(i)}(t)$, has positive weight,
because each monomials of this coefficient contains either
derivatives of $\alpha(t)$ or functions $\zeta_{sp}(t)$ or their
derivatives. Hence the function $\beta_{l+1-2r}^{(i)}(t)$ cannot
appear in the expression for $\lambda_{jp}(t)$ with
$p\geq j+l-2r-i$.

All this implies that the operator ${\mathcal I}_ {\mathcal
P_{r,i}(\Gamma_0)}$ satisfies the following relation:
\begin{equation}
\label{pri}
{\mathcal I}_{\mathcal P_{r,i}(\Gamma_0)}\bigl(
f_j
\bigr)=U_{rij}
e_{j+l-2r-i}
\,\,\,{\rm mod}\, \widetilde V^{(-k+j-2r-i-1)}.
\end{equation}

By the similar arguments, one gets
\begin{equation}
\label{prie} {\mathcal I}_ {\mathcal P_{r,i}(\Gamma_0)}\bigl(
e_j)
\in V^{(-k-l+j-2r-i-1)}.
\end{equation}

Taking into account that $e_j\in V^{(-k-l+j)}$ and $f_j\in V
^{(-k+j)}$, we obtain from \eqref{pri}-\eqref{prie} that ${\mathcal
I}_ {\mathcal P_{r,i}(\Gamma_0)} \in \gl(V)^{(-2r-i)}$ and that the
equivalence class of the operator ${\mathcal I}_ {\mathcal
P_{r,i}(\Gamma_0)}$ in the space
$\gl(V)^{(-2r-i)}/\gl(V)^{(-2r-i-1)}$ does not depend on
$\Gamma_0\in P_{k,l}$. To complete the proof of the lemma it remains
only to show that ${\mathcal I}_{\mathcal P_{r,i}(\Gamma_0)} \notin
\gl(\widetilde V)^{(-2r-i-1)}$ or, equivalently, that given a pair
$(r,i)$ the constant $U_{rij}$ does not vanish  for at least one
pair $j$. From universality of the constants $U_{rij}$ it is
sufficient to check only for one curve of flags with the given Young
diagram. We shall consider the flat curve $\mathfrak F_{k,l}$,
defined in the Introduction, as a simplest possible case.


Directly from the definition it follows that for the flat curve the
functions $\xi_{jp}(t)$ and $\zeta_{jp}(t)$ from \eqref{deriv0}
vanish. Besides, in section 4, Theorem 2,  it will be shown that for
the flat curve any normal (quasi-symplectic) frame is symplectic.
Therefore, symplectic products with positive weights of pairs of
sections from the set $\Bigl\{\{e_s\}_{s=1}^{2k+l-1},
\{f_s\}_{s=1}^{2k+l-1}\}\Bigr\}$ vanishes as well. This implies that
for the flat curve the operator ${\mathcal I}_{\mathcal
P_{r,i}(\Gamma_0)}$ satisfies
\begin{equation}
\label{priflat}\left\{
\begin{aligned}
~&{\mathcal I}_ {\mathcal P_{r,i}(\Gamma_0)}\bigl(
e_j
\bigr)=0\\
~&{\mathcal I}_{\mathcal P_{r,i}(\Gamma_0)}\bigl(
f_j
\bigr)=U_{rij}
e_{j+l-2r-i}
.
\end{aligned}\right.
\end{equation}
So, if  $U_{rij}=0$ for any $j$, then ${\mathcal I}_{\mathcal
P_{r,i}(\Gamma_0)}=0$, but the latter is impossible. Indeed, let
$\text{Fol}_1$ be a subfoliation of $\text{Fol}$ such that the
points of the same leaf of $\text{Fol}_1$ correspond not only to the
same projective parametrization of $\gamma$ , but also to the same
symplectic form $\sigma$ from the one-parametric family of forms on
$\Delta(\gamma)$ and the same first section $e_1(t)$ from the normal
pair of sections. In the flat case equations \eqref{eqmod1} w.r.t.
the functions $\{\beta_r(t)\}_{r=0}^{[l/2]}$ are linear. Therefore
each leaf of the foliation $\text {Fol}_1$ has natural affine
structure. The mapping ${\mathfrak F_2}$ sends the leaf of the
foliation $\text{Fol}_1$ passing through the point $\Gamma_0$ to the
set of frames on $\Delta(\gamma)$ of the type
$$\Bigl\{\{e_s(0)\}_{s=1}^{2k+l-1},
\{f_s(0)+\sum_{p=1}^{2k+l-1}\lambda_{sp}e_p(0)\}_{s=1}^{2k+l-1}\Bigr\},$$
which also has natural affine structure.  Besides, it is clear that
in the flat case the mapping ${\mathfrak F_2}$ is affine on each
leaf of $\text {Fol}_1$. From the proof of  Theorem 2 below the
dimensions of the image of this leaf w.r.t. ${\mathfrak F_2}$ is
equal to the dimension of this leaf. In particular, this implies
that in the flat case the restriction of ${\mathfrak F_2}$ to each
leaf of $\text {Fol}_1$ (and therefore $ {\mathfrak F_2}$ itself) is
an immersion. Since the tuple of vectors $\{\mathcal P_{r,
i}(\Gamma_0)\}_{0\leq r\leq \left[\frac{l}{2}\right], 0\leq i\leq
2l-4r}$ span the tangent space at $\Gamma_0$ to the leaf of $\text
{Fol}_1$ passing through $\Gamma_0$, we get from here that
$\widetilde I_{P_{r,i}(\Gamma_0)}\neq 0$. This completes the proof
of the lemma.
\end{proof}
The previous arguments also shows that the mapping ${\mathfrak F_2}$
is an immersion.
\end{proof}
\section{Prolongation of filtered frame bundles on
corank 1 distributions}
\subsection{Graded skew-symmetric forms, symbols, and
$W-$structures} \setcounter{equation}{0}

Collecting together the common features of both cases considered in
the previous section, we arrive naturally to the following abstract
setting.

Let $\mathcal M$ be a smooth manifold endowed with a
bracket-generating distribution $\mathfrak D$ of corank 1. On each
subspace $\mathfrak D(x)$, $x\in \mathcal M$, a skew-symmetric
bilinear form $\omega_x$ is defined, up to a multiplication by a
nonzero constant: $\omega_x=d\alpha(x)|_{\mathfrak D(x)}$, where
$\alpha$ is a nonzero one form, annihilating the distribution
$\mathfrak D$. Note that we do not assume that the distribution
$\mathfrak D$ is contact so the form $\omega_x$ is not symplectic in
general.
As in the symplectic case, a subspace $\Lambda$ of $\mathfrak D(x)$
is called isotropic (w.r.t. the form $\omega_x$), if
$\omega_x|_\Lambda=0$. Also, given a subspace $\Lambda\subset
\mathfrak D(x)$, denote by $\Lambda^\angle=\{v\in \mathfrak
D(x):\omega_x(v,\ell)=0\,\,\forall\, \ell\in\Lambda\}$, the
generalized skew-symmetric complement of $\Lambda$.
%

Further let $V$ be a vector space of dimension  $\dim \mathcal M-1$
(equal to rank of $\mathfrak D$) endowed with a
filtration
\begin{equation}
\label{filtV} V=V^{(I)}\supseteq V^{(I-1)} \supseteq \ldots
\supseteq V^{(-I-1)}\supseteq\ldots\supseteq V^{(-I_1)}=0.
\end{equation}
Also assume that $V$ is endowed with a distinguished basis
compatible with the filtration \eqref{filtV}.
Denote by $\F(\mathfrak D)$ the bundle over $\mathcal M$ of all
frames of $\mathfrak D$. It can be identified with the set of all
isomorphisms $\phi_x\colon V\to \mathfrak D(x)$, $x\in \mathcal M$:
to any frame $\mathfrak D(x)$ one assigns the isomorphism $\phi_x$,
which  sends the distinguished basis of $V$ to this frame. Further,
let $\F_V(\mathfrak D)$ be the subbundle  of $\F(\mathfrak D)$,
consisting of all frames $\phi_x$ of $\mathfrak D$
such that  the following two conditions hold
\begin{enumerate}
\item each subspace $\phi_x\bigl(V^{(i)}\bigr)$ with $i<0$ is an isotropic
subspace of $\mathfrak D(x)$ w.r.t. the form $\omega_x$;

\item $\phi_x(V^{(-i-1)}\bigr)=\Bigl(\phi_x\bigl(V^{(i)}\bigr)\Bigr)^\angle$
for any $-1-I\leq i\leq I$.
\end{enumerate}

In the other words, the mapping $\phi_x\in \F_V(\mathfrak D)$
induces a well defined, up to a multiplication on a nonzero
constant, skew-symmetric bilinear form $\widetilde\omega_{\phi_x}=
\phi_x^*\omega_x$ on $V$ such that subspaces $V^{(i)}$ with $i<0$
are isotropic and $V^{(-i-1)}=(V^{(i)})^\angle$ for any
$i\in\{-I-1,\ldots,I\}$ w.r.t. $\widetilde\omega_{\phi_x}$. Besides,
the form $\widetilde\omega_{\phi_x}$ induces the skew-symmetric
bilinear form $\widetilde\omega_{\phi_x,\gr}$ on the graded space
\begin{equation}
\label{gradesp} \gr V=\bigoplus_{i=I_1+1}^{I}(V^{(i)}/V^{(i-1)})
\end{equation}
in the following way: assume that  $\bar x \in
V^{(j_1)}/V^{(j_1-1)}$ and $\bar y\in V^{(j_2)}/V^{(j_2-1)}$, then
\begin{enumerate}
\item
if $j_1+j_2=0$, we put $\widetilde\omega_{\phi_x,\gr}(\bar x,\bar
y)=\widetilde\omega_{\phi_x}(x,y)$, where $x$ and $y$ are
representatives of $\bar x$ and $\bar y$ in $V^{(j_1)}(t)$ and
$V^{(j_2)}(t)$ respectively;
\item
if $j_1+j_2=0$, we put $ \widetilde\omega_{\phi_x,\gr}(\bar x,\bar
y)=0$.
\end{enumerate}
Let $P$ be a fiber bundle over $\mathcal M$ endowed with the fixed
fiberwise immersion $\mathfrak F$ to $\F_V(\mathfrak D)$. One says
that a fiber subbundle $P$ of $\F_V(\mathfrak D)$ has a \emph{
constant graded skew-symmetric form} if  the forms
$\widetilde\omega_{\mathfrak F(p),\gr}$ are the same, up to a
multiplication on a nonzero constant, for all $p\in P$. For a rank 3
distribution $D$  of maximal class and with the fixed diagram $T$ as
manifold $\mathcal M$ we take the manifold $N$,
as a distribution $\mathfrak D$ we take the distribution $\Delta$
and as the bundle $P$ we take the bundles $P_{k,l}$ for the
corresponding $k$ and $l$.
Note that by our constructon the bundles $P_{k,l}$
have constant graded skew-symmetric form for any $k$ and $l$.

Further,
let $P_x=\pi^{-1}(x)\cap P$ be its fiber over $x$. Using the
identifications above and the immersion $\mathfrak F$, one  gets
that the tangent space $T_p(P_{\pi(p)})$ to a fiber $P_{\pi(p)}$ at
a point $p$ can be identified with a subspace of $\gl(V)$, which
will be denoted by $W_p$. Indeed, to any vector $A$ belonging to
$T_p(P_{\pi(p)})$ we assign an element ${\mathcal I}_A$ of $\gl(V)$
as follows: if $s\to p(s)$ is a smooth in $P$ such that $p(0)=p$ and
$p '(0)=A$ then let ${\mathcal I}_A=\mathfrak F(p)^{-1}\circ
\frac{d}{ds}\mathfrak F\bigl(p(s)\bigr)|_{s=0}$, where in the last
formula by $\mathfrak F\bigl(p(s)\bigr)$ we mean the isomorphism
between $V$ and $\mathfrak D(\pi (p))$ corresponding to the frame
$\mathfrak F\bigl(p(s)\bigr)$. Set $W_p=\{\mathcal I_A: A\in
T_p(P_{\pi(p)})\}$. By analogy with the previous section, the
filtration \eqref{filtV} induces a natural filtration on $\gl(V)$
and, therefore, on any its subspace. The corresponding graded
subspace ${\rm gr}\,W_p$ is called a \emph{symbol of the bundle $P$
at a point $p$}. Symbols are subspaces of ${\rm gr}\, \gl(V)$, which
in turn is naturally identified with $\gl({\rm gr} V)$.
We say that the bundle $P$ has \emph{constant symbol} if its symbols
at different points coincide.

In the sequel we shall deal only with fibre
bundles $P$
as above having constant graded skew-symmetric form and constant
symbol. We denote this graded skew-symmetric form on $\gr V$ by
$\mathfrak w$ and the  symbol ${\rm gr}\,W_p$ by $\mathfrak s$. Note
that by our construction the form $\mathfrak w$ is not identically
zero.

\begin{defin}
\label{Wstr} Let  $P$ be a fibre
bundle
endowed with the fixed fiberwise immersion $\mathfrak F$ to
$F_V(\mathfrak D)$ and with
constant symbol $\mathfrak s$. Let $W$ be a filtered linear space
such that the dimensions of the spaces of its filtration are equal
to the dimensions of the corresponding spaces of the filtration of
$W_p$ ($=T_p(P_{\pi(p)})$). $P$ is called a $W$-structure of frames
on $\mathfrak D$, if the filtered tangent spaces $W_p$ to fibers of
$P$ at different points are identified together with filtrations on
them or , more precisely, if a smooth family $\{\psi_p\}_{p\in P}$
of isomorphisms $\psi_p: W\to
W_p$, preserving the filtrations, is fixed.
\end{defin}

If the subbundle $P$ is a reduction of the bundle $\F_V(\mathfrak
D)$
to a subgroup $G$ of $GL(V)$ with the Lie algebra $\g$, then $P$ is
automatically a $\g$-structure:  the filtration \eqref{filtV}
induces a filtration on $\g$ and the symbol of $P$  is nothing but
$\gr\g$;
the spaces $T_p(P_{\pi(p)})$ are naturally identified with $\g$.
This situation occurs for a rank $3$ distribution with a rectangular
diagram $T$.
In this case the structure group is isomorphic to $ST(2,\mathbb
R)\times GL(2,\mathbb R)$.
In the case of nonrectangular diagram the corresponding bundle
$P_{k,l}$, $l>0$, has constant symbol by Lemma \ref{constsymblem}.
Moreover, as was shown in the previous section, on each fiber of
$P_{k,l}$ there is a distinguished global frame, which is equivalent
to the fact that the filtered tangent spaces $W_p$ to fibers of
$P_{k,l}$ at different points are identified (and also together with
filtrations on them).
So, $P_{k,l}$ is a $W$-structure as well.

\subsection{Prolongation procedure} In the sequel for simplicity of presentation we
suppose that $\pi\colon P\to \mathcal M$ is a $W$-structure, which
is a fiber subbundle of $\F_V(\mathfrak D)$. All constructions are
generalized to arbitrary $W$-structures in an obvious way. We also
assume that $P$ has  a constant graded skew-symmetric form on
$\mathfrak D$.
 Let
$\mathfrak D^{(1)}=\pi^*\mathfrak D$ be the pullback of $\mathfrak
D$ by the projection $\pi$. One can define the partial soldering
form on $\mathfrak D^{(1)}$, i.e., a field of linear maps
$\theta_p:\mathfrak D^{(1)}(p)\mapsto V$ of $V$-valued partial
one-form on $\mathfrak D^{(1)}$ given by:
\[
  \theta_{p} (X) = (p)^{-1}(d_{p}\pi(X)),\quad X\in
\mathfrak D^{(1)}(p),
\]
where, as before, a point $p\in P$ is identified with an isomorphism
$p\colon V\to \mathfrak D\bigl(\pi(p)\bigr)$.

Consider a bundle $Q$ over $P$ with a fiber $Q_p$ over a point $p$,
consisting of all subspaces, which complete the spaces
$W_p=\ker d_p\pi$
to $\mathfrak D^{(1)}(p)$. That is:
\[
Q_p = \{
H_p\subset \mathfrak D^{(1)}(p)\mid H_p \oplus W_p
=\mathfrak D^{(1)}(p)
 \}.
\]
Note that the partial soldering form $\theta$ defines an isomorphism
of $H_p$ with $V$ for any horizontal subspace $H_p$.  Fix a point
$q\in Q_p$, $q=H_p$ and a pair of vectors $v_1$ and $v_2$ in $V$.
Take two vector fields $Y_1$ and $Y_2$ in a neighbourhood $U$ of $p$
such that $\theta (Y_i)\equiv v_i$ in $U$ and $Y_i(p)\in H_p$ for
$i=1,2$.
Set
\begin{equation} \label{N}\mathfrak
N_q(v_1,v_2)=d_p\pi \left([Y_1, Y_2](p)
\right).
\end{equation}
It is clear that the vector $\mathfrak N_q(v_1,v_2)\in
T_{\pi(p)}\mathcal M$ does not depend on a choice of a pair of
vector fields $Y_1$ and $Y_2$ with the properties prescribed above.

Given a vector $v \in V$ define a vector $\gr v$ in the graded space
$\gr V$
as follows: if $v\in V^{(i)}$, but $v\notin V^{(i-1)}$, then $\gr v$
is an equivalence class of $v$ in $V^{(i)}/V^{(i-1)}$.  By our
constructions there exists two vectors $\bar v_1$ and $\bar v_2$ in
$V$ such that
\begin{equation}
\label{grcond} \mathfrak w(\gr \bar v_1, \gr \bar v_2)\neq 0,
\end{equation}
where, as before, $\mathfrak w$ is the graded skew-symmetric form on
$\gr V$, associated with our bundle $P$. Condition \eqref{grcond}
implies that
\begin{equation}
\label{neq} \widetilde \omega_p(\bar v_1,\bar v_2)\neq 0 \quad \text
{for any } p,
\end{equation}
where, as before, $\widetilde \omega_p(\bar v_1,\bar v_2)= d\alpha
\bigl(p(\bar v_1), p(\bar v_2)\bigl)$ for some nonzero $1$-form
$\alpha$, annihilating the distribution $\mathfrak D$. The condition
\eqref{neq} is in turn equivalent to the fact that the vector
$\mathfrak N_q(\bar v_1,\bar v_2)$ is transversal to $\mathfrak D
\bigl(\pi(p)\bigr)$.

Further, define a vector space $\widehat V=V\oplus\mathbb R\eta$ for
some vector $\eta$. Then given a point $q\in Q_p$  one can define an
extension of the isomorphism $p\colon V\mapsto \mathfrak
D\bigl(\pi(p)\bigr)$  to an isomorphism $\chi_q\colon \widehat
V\mapsto T_{\pi(p)}\mathcal M$ by setting $\chi_q(\eta)=\mathfrak
N(q)(\bar v_1,\bar v_2)$. Finally, let ${\rm pr}\colon\widehat
V\mapsto V$ be the canonical projection corresponding to the
splitting $\widehat V=V\oplus\mathbb R\eta$. Then we can define the
\emph{structure function $C\colon Q\to \Hom(\wedge^2 V,V)$,
associated with a pair of vectors $\bar v_1$ and $\bar v_2$} as
follows:
\begin{equation}
\label{structeq} C(q)(v_1,v_2)={\rm
pr}\circ(\chi_q)^{-1}\bigl(\mathfrak N_q(v_1, v_2)\bigr).
\end{equation}

Now take $H_p$ and $H'_p$ in $Q_p$. How $C(H_p)$ and $C(H'_p)$ are
related?  First, recall that $H_p$ and $H'_p$ are subspaces in
$\mathfrak D^{(1)}(p)$ complementary to the tangents space $W_p$ to
a fiber of $P$ at a point $p$.
Then we have a well-defined map:
\[
\delta(H'_p,H_p)\colon V\to W_p,
\]
such that
\begin{equation}
\label{delta}
 X + \delta(H'_p,H_p)(\theta(X)) \in H'_p\quad\text{for
each } X\in H_p.
\end{equation}
It is easy to see that the set of all subspaces at $p\in P$, which
are  complementary to $W_p$ in $\mathfrak D(p)$,  forms an affine
space associated with a vector space $\Hom(V, W_p)$, i.e., $Q$ is an
affine bundle over $P$.

Second, recall that the \emph{Spencer operator $S_p$ of a pair $(V,
W_p)$}, where $V$ is a vector space and  $W_p$ is a subspace of
$\gl(V)$ is defined as follows:
\begin{equation}
\label{Spencer}
\begin{split}
S_p\colon  \Hom(V,W_p)\to \Hom(\wedge^2 V, V),\quad \phi\mapsto S_p(\phi),\\
S_p(\phi)\colon  v_1 \wedge v_2 \mapsto \phi(v_1)v_2 -
\phi(v_2)v_1,\quad v_1,v_2\in V.
\end{split}
\end{equation}
Now assume, as before, that $V$ is a vector space, $\bar v_1$ and
$\bar v_2$ are vectors in $V$, $\widetilde\omega_p$ is a
skew-symmetric form on $V$ such that $\widetilde\omega_p(\bar
v_1,\bar v_2)\neq 0$, and $W_p$ is a subspace of $\gl(V)$. To the
quintuple $(V,\bar v_1, \bar v_2, \widetilde \omega_p, W_p)$  we
assign  a \emph{modified Spencer operator} $\widetilde S_p$ in the
following way:

\begin{equation}
\label{modSpencer}
\begin{split}
\widetilde S_p\colon \Hom(V,W_p)\to \Hom(\wedge^2 V, V),\quad
\phi\mapsto \widetilde S_p(\phi),
\\
\widetilde S_p(\vf) (v_1,v_2)=S_p(\vf) (v_1,v_2)-
\widetilde\omega_p(v_1,v_2)\frac{S_p(\vf) (\bar v_1,\bar
v_2)}{\widetilde\omega_p(\bar v_1,\bar v_2)}
.
\end{split}
\end{equation}

Then by direct computation one can show that
\begin{equation}
\label{transstruct}
 C(H'_p) = C(H_p) + \widetilde
S_p\bigl(\delta(H'_p,H_p)\bigr),
\end{equation}
where $\delta(H'_p,H_p)$ is as in \eqref{delta}.
The classical first prolongation $W_p^{(1)}$ of the subspace
$W_p\subset \gl(V)$ is defined to be the kernel of $S_p$. The
modified first prolongation $W_p^{(1\text{m})}$ of the subspace
$W_p\subset \gl(V)$ is defined to be the kernel of $\widetilde S_p$.
In both cases  $W_p^{(1)}$ and $W_p^{(1\text{m})}$ are subspaces of
$\gl(V, W_p)$. It is clear that $W_p^{(1)}\subseteq
W_p^{(1\text{m})}$.

Note that the modified Spencer operator depends on a choice of a
pair of vectors $\bar v_1$ and $\bar v_2$ in $V$, satisfying
\eqref{grcond}. The amazing thing is that \emph{the modified first
prolongation $W_p^{(1\text{m})}$ does not depend on a choice of a
pair of vectors $\bar v_1$ and $\bar v_2$ in $V$, satisfying
\eqref{neq}}. This is because if one takes another pair of vectors
$\tilde v_1$ and $\tilde v_2$ in $V$ such that
$\widetilde\omega_p(\tilde v_1,\tilde v_2)\neq 0$ and $\vf\in
W_p^{(1\text{m})}$, then by \eqref{modSpencer}
$$ \frac{S_p(\vf) (\bar v_1,\bar
v_2)}{\widetilde\omega_p(\bar v_1,\bar v_2)}=\frac{S_p(\vf) (\tilde
v_1,\tilde v_2)}{\widetilde\omega_p(\tilde v_1,\tilde v_2)}.$$ One
can describe the modified first prolongation $W_p^{(1\text{m})}$ in
more symmetric way as follows:

$$W_p^{(1\text{m})}=\{\vf\in \Hom(V, W_p)\mid \widetilde\omega_p(\bar v_1,\bar v_2)S_p(\vf) (v_1,v_2)-
\widetilde\omega_p(v_1,v_2)S_p(\vf) (\bar v_1,\bar v_2)=0, \forall
v_1,v_2, \bar v_1, \bar v_2\in V\}$$

Besides, there is another convenient characterization of the
modified first prolongation:
\begin{equation}
\label{modv} W_p^{(1\text{m})}=\{\vf\in \Hom(V, W_p)\mid\exists v\in
V \text{ such that } S_p(\vf) (v_1,v_2)=
\widetilde\omega_p(v_1,v_2)v\,\,\forall v_1,v_2\in V\}
\end{equation}

More generally, as in the classical theory of prolongations, one can
define the modified Spencer operator and the modified first
prolongation also for a subspace $\mathfrak W$ of $\Hom (V,V_1)$,
where $V_1$ is some linear space not necessary equal to $V$ as
before. In this case the Spencer and the modified Spencer operators
are operators from $\Hom (V,\mathfrak W)$ to $\Hom(V\wedge V, V_1)$
and they are defined by the same formulas, as in \eqref{Spencer} and
\eqref{modSpencer}. The modified first prolongation is a kernel of
the modified Spencer operator. There is also a description of the
modified first prolongation analogous to \eqref{modv}, where we can
consider $W_p$ as a subspace of $\Hom(V,V_1)$ and take $v$ from
$V_1$.
This slight generalization allows us to define inductively the
modified higher order prolongations of $W_p$. Indeed, by
construction $W_p^{(1\text{m})}\subset \Hom(V, W_p)$. So, taking
$W_p$ as $V_1$, one can define
\[
W_p^{(2\text{m})} =\bigl(W_p^{(1\text{m})}\bigr)^{(1\text{m})}.
\]
More generally, for any $i>0$ we can consider $W_p^{(i\text{m})}$ as
a subspace in $\Hom(V, V_1)$ with $V_1=W_p^{((i-1)\text{m})}$ and
set by induction $W_p^{((i+1)\text{m})}
=\bigl(W_p^{(i\text{m})}\bigr)^{(1\text{m})}$.

\begin{lem}
\label{Tanmodlem} Suppose that $W$ is a subalgebra in
$\mathfrak{csp}(V)$ and $\dim W\geq 4$. Define a graded nilpotent
Lie algebra $\g_{-}=\g_{-2}+\g_{-1}$, where $\g_{-2}=\R\eta$ for
some element $\eta$, $\g_{-1}=V$ and the Lie bracket given by
$[v_1+a_1\eta,v_2+a_2\eta]=\widetilde\omega(v_1,v_2)\eta$. Set
$\g_0=W$ and let $\g=\sum_{i\ge -2}\g_i$ be the Tanaka prolongation
of the pair $(\g_{-},\g_0)$. Then the $p$-th modified prolongation
$W^{(p\text{m})}$ of $W$ coincides with the subspace $\g_p$ in the
Tanaka prolongation for any $p\ge0$.
\end{lem}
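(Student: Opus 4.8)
The plan is to prove that the tower of modified prolongations $W^{(p\mathrm{m})}$ reproduces, degree by degree, the Tanaka prolongation $\g_p$ of the pair $(\g_-,\g_0)$ with $\g_-=\R\eta\oplus V$ and $\g_0=W\subset\mathfrak{csp}(V)$. I would proceed by induction on $p$, the base case $p=0$ being the definition $\g_0=W$. The heart of the matter is to identify, for each $p\geq 1$, the Tanaka prolongation space $\g_p$ — defined abstractly as the space of degree-$p$ derivations of $\g_-$ into $\bigoplus_{i<p}\g_i$ compatible with brackets — with the modified first prolongation $\bigl(W^{((p-1)\mathrm{m})}\bigr)^{(1\mathrm{m})}$ computed via the modified Spencer operator $\widetilde S$ of \eqref{modSpencer}.

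The key reformulation I would exploit is the characterization \eqref{modv}: an element $\vf\in\Hom(V,W^{((p-1)\mathrm{m})})$ lies in $W^{(p\mathrm{m})}$ precisely when there is a single $v\in W^{((p-2)\mathrm{m})}$ (or $v\in V$ when $p=1$) with $S(\vf)(v_1,v_2)=\vf(v_1)v_2-\vf(v_2)v_1=\widetilde\omega(v_1,v_2)\,v$ for all $v_1,v_2\in V$. I would then unwind the Tanaka definition. A degree-$p$ element $f\in\g_p$ is a linear map $f\colon\g_-\to\g$ raising degree by $p$, determined by its restrictions $f|_{\g_{-1}}\colon V\to\g_{p-1}$ and $f|_{\g_{-2}}\colon\R\eta\to\g_{p-2}$, subject to the derivation identity $f([x,y])=[f(x),y]+[x,f(y)]$. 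Taking $x,y\in\g_{-1}=V$ and using $[v_1,v_2]=\widetilde\omega(v_1,v_2)\eta$, this identity reads $\widetilde\omega(v_1,v_2)\,f(\eta)=[f(v_1),v_2]+[v_1,f(v_2)]$. Interpreting the brackets on the right through the action $[A,v]=Av$ of $\g_0$-type elements on $V$ (and its iterates at higher degree), the right-hand side becomes exactly $f(v_1)v_2-f(v_2)v_1=S_p(f|_V)(v_1,v_2)$, while $f(\eta)$ plays the role of the vector $v$ in \eqref{modv}. Thus the bracket-compatibility on $\wedge^2\g_{-1}$ is literally the defining equation $S(\vf)(v_1,v_2)=\widetilde\omega(v_1,v_2)\,v$ of the modified prolongation, with the correspondence $\vf=f|_V$ and $v=f|_{\g_{-2}}(\eta)$. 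This I would establish as the central bijection, checking that it is linear and degree-preserving.

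I expect the \textbf{main obstacle} to be bookkeeping the bracket-compatibility conditions involving $\g_{-2}=\R\eta$ correctly at every degree, and verifying that no additional Tanaka constraints arise beyond the single $\wedge^2 V$ relation. Concretely, one must confirm that the derivation condition with one argument equal to $\eta$ imposes no independent restriction: since $[\g_{-1},\g_{-2}]=0$ and $[\g_{-2},\g_{-2}]=0$ in the Heisenberg algebra $\g_-$, the identity $f([v,\eta])=[f(v),\eta]+[v,f(\eta)]$ reduces to $0=[f(v),\eta]+[v,f(\eta)]$, and I would need to check that this is automatically satisfied once the element $v=f(\eta)$ is chosen as above — equivalently, that the image vector $v$ in \eqref{modv} is forced to lie in the correct prolongation space $\g_{p-2}=W^{((p-2)\mathrm{m})}$ rather than being an unconstrained element of $V$. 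This is precisely where the hypothesis $W\subset\mathfrak{csp}(V)$ enters: because each element of $W$ preserves $\widetilde\omega$ up to scalar, the induced action on $\eta$ through the grading is consistent, and the vector $v$ inherits membership in the lower prolongation. I would isolate this compatibility as a lemma and dispatch it by the induction hypothesis, after which the degree count $\dim\g_p=\dim W^{(p\mathrm{m})}$ and the identification of the spaces follow immediately. The condition $\dim W\geq 4$ should be invoked only to guarantee the nondegeneracy needed so that the normalization in \eqref{modSpencer} is well posed and the family $\{\widetilde\omega_p\}$ is genuinely nonzero.
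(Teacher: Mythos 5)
Your overall framework is the same as the paper's: identify a Tanaka element $\phi\in\g_{p}$ with the pair $(\psi,v)=(\phi|_{V},\phi(\eta))$, and observe that the derivation identity evaluated on $v_1,v_2\in\g_{-1}$ is literally the equation \eqref{modv} defining the modified prolongation. That gives the inclusion $\g_{p}\subseteq W^{(p\text{m})}$. You also correctly locate the remaining obligation: for the reverse inclusion one must show that a pair $(\psi,v)$ satisfying only the $\wedge^2 V$ relation automatically satisfies the derivation identity with one argument equal to $\eta$, i.e.\ $[\phi(\eta),z]+[\eta,\phi(z)]=0$ for all $z\in V$.

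The gap is that you do not actually prove this last condition, and the sketch you give for it is wrong on two counts. First, you declare it ``equivalent'' to the statement that $v=\phi(\eta)$ lies in the correct space $\g_{p-2}=W^{((p-2)\text{m})}$; but that membership is automatic from the definition of the iterated modified prolongation (the vector $v$ in \eqref{modv} is taken from $V_1=W^{((p-2)\text{m})}$ by construction), whereas $[\phi(\eta),z]+[\eta,\phi(z)]=0$ is a genuinely independent linear identity --- already for $p=1$ it reads $\widetilde\omega(\phi(\eta),z)\,\eta = -[\eta,\phi(z)]$, a nontrivial relation between $\phi(\eta)$ and the conformal factors of the operators $\phi(z)$. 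Second, you relegate the hypothesis $\dim V\geq 4$ to ``well-posedness of the normalization,'' but it is exactly this hypothesis that makes the missing step work: the paper's proof picks, for a given $z$, vectors $x,y\in V$ with $[x,y]=\eta$ and $[x,z]=[y,z]=0$ (impossible when $\dim V=2$), writes $\phi(\eta)=[\phi(x),y]+[x,\phi(y)]$, and then shuffles brackets via the Jacobi identity and the already-established relations on $\wedge^2 V$ to obtain $[\phi(\eta),z]=[\phi(x)z,y]-[\phi(y)z,x]=-[\eta,\phi(z)]$. The paper's Remark after the lemma shows the statement genuinely fails for $\dim V=2$ ($W^{(1m)}=\Hom(V,W)$ there), so no argument of the kind you sketch --- one that does not use the dimension hypothesis in the hard direction --- can close this gap.
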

\begin{proof}
According to~\cite{tan} the subspace $\g_{i+1}$ in the Tanaka
prolongation is defined inductively as a set of all linear maps
$\phi$ of degree $i+1$ from $\g_{-}$ to $\sum_{j=-2}^i\g_j$ such
that
\begin{equation}\label{tan:pr}
\phi([x_1,x_2])=[\phi(x_1),x_2] + [x_1,\phi(x_2)],\quad\text{for all
}x_1,x_2\in\g_{-}.
\end{equation}
It is clear that any such map $\phi$ is uniquely defined by its
restriction $\psi$ to $\g_{-1}$. Let $v=\phi(\eta)\in \g_{i-1}$.
Therefore, using induction, one can consider $\g_{i+1}$ as a
subspace in $\Hom(\g_{-1},\g_{i})$.  Substituting $x_1=v_1$,
$x_2=v_2$, $v_1,v_2\in V$ into equation~\eqref{tan:pr} we get the
following linear equation on a pair $(\psi,v)$:
\begin{equation}\label{tan:pr2}
\widetilde\omega(v_1,v_2)v = [\psi(v_1),v_2] - [\psi(v_2),
v_1],\quad\text{for all }v_1,v_2\in V=\g_{-1}.
\end{equation}
This equation coincides with equation~\eqref{modv} on the modified
prolongation $\g_{i}^{(1\text{m})}$ of $\g_{i}$. It implies that
$\g_{i+1}$ is contained in $\g_{i}^{(1\text{m})}$.

In order to prove that these spaces are equal it is sufficient to
show that if $\phi$ is a linear maps of degree $i+1$ from $\g_{-}$
to $\sum_{j=-2}^i\g_j$ such that\eqref{tan:pr} holds for any $x_1,
x_2 \in \g_{-1}$($=V$), then it holds for any  $x_1, x_2 \in
\g_{-}$.
It is trivial for $x_1=x_2=\eta$. So, it remains to consider only
the case $x_1=\eta$ and $x_2=z\in V$. Since  $[\eta, z]=0$, in this
case \eqref{tan:pr} is equivalent to
\begin{equation}
\label{tan:pr3} [\phi(\eta),z] + [\eta,\phi(z)] = 0.
\end{equation}

Let us prove \eqref{tan:pr3}. Since $\dim V\geq 4$, we can choose
$x$ and $y$ in $V$ such that $[x,y]=\eta$ and  $[x,z]=[y,z]=0$.
Then, using \eqref{tan:pr} for vectors from $V$, we get
\begin{eqnarray}
&~&\label{tan:pr4} [\phi(\eta),z]=\bigl[[\phi(x),y],z\bigr] +
\bigr[[x,\phi(y)],z\bigr]=[\phi(x)y,z]-[\phi(y)x, z]\\
&~&\label{tan:pr5}
[\eta,\phi(z)]=\bigl[[x,\phi(z)],y]\bigr]+\bigl[x,[y,\phi(z)]\bigr]
\end{eqnarray}
Since $\phi(x)\in \g_i$ and $[y,z]=0$, we have
$[\phi(x)y,z]=[\phi(x)z, y]$. Similarly,
$[\phi(y)x,z]=[\phi(y)z,x]$. Substituting this to \eqref{tan:pr4},
we get
\begin{equation}
\label{tan:pr6} [\phi(\eta),z]=[\phi(x)z, y]-[\phi(y)z,x]
\end{equation}
Further, since \eqref{tan:pr} holds on $V$ and $[x,z]=0$, then
$[x,\phi(z)]=[z,\phi(x)]$. In the same way,
$[y,\phi(z)]=[z,\phi(y)]$. Substituting it to \eqref{tan:pr5} we get
\begin{equation}
\label{tan:pr7}
[\eta,\phi(z)]=\bigl[[z,\phi(x)],y\bigr]+\bigl[x,[z,\phi(y)]=
-[\phi(x)z, y]+[\phi(y)z,x]
\end{equation}
Identity \eqref{tan:pr3} follows from \eqref{tan:pr6} and
\eqref{tan:pr7}. This completes the proof of the lemma.
\end{proof}

%
\begin{rem} Note that the last lemma does not hold for $\dim V=2$, as
condition~\eqref{modv} becomes trivial in this case, and we get
$W^{(1m)}=\Hom(V,W)$. In particular, if $W\ne0$, then $W^{(im)}$ is
also non-zero for any $i\ge0$. On the other hand, Tanaka
prolongation is non-trivial even for $\dim V = 2$. For example, if
$W$ consists of all diagonal matrices in a certain symplectic bases
of $V$, then Tanaka prolongation of $W$ becomes zero on the third
step.
\end{rem}

\begin{cor}
If $W$ is a subalgebra in $\mathfrak{csp}(V)$  and $\dim V\geq 4$
then the sum $\R{\eta}+V+\sum_{i\ge 0}W^{(im)}$ has a natural
structure of a graded Lie algebra, where $\deg \eta = -2$, $\deg V =
-1$, and $\deg W^{(im)} = i$ for any $i\ge 0 $.
\end{cor}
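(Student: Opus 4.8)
The plan is to obtain this corollary as an essentially immediate consequence of Lemma \ref{Tanmodlem}, which already carries all the analytic weight. First I would reconstruct the graded nilpotent Lie algebra exactly as in that lemma: set $\g_{-2}=\R\eta$, $\g_{-1}=V$, with bracket $[v_1+a_1\eta,v_2+a_2\eta]=\widetilde\omega(v_1,v_2)\eta$, and $\g_0=W$. Since $W$ is assumed to be a subalgebra of $\mathfrak{csp}(V)$, and $\mathfrak{csp}(V)$ coincides with the algebra of grading-preserving derivations of $\g_{-2}\oplus\g_{-1}$ (as recorded in the Introduction), the pair $(\g_-,\g_0)$ with $\g_-=\g_{-2}\oplus\g_{-1}$ is a legitimate input for the Tanaka prolongation construction of \cite{tan}. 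This prolongation $\g=\sum_{i\ge -2}\g_i$ is, by Tanaka's construction, a graded Lie algebra with $\deg\g_i=i$.

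Next I would invoke Lemma \ref{Tanmodlem} itself. Under the hypothesis $\dim V\ge 4$ (which is the condition genuinely used in the proof of that lemma, where one selects $x,y\in V$ with $[x,y]=\eta$ and $[x,z]=[y,z]=0$), the lemma yields $W^{(i\mathrm{m})}=\g_i$ for every $i\ge 0$. Combining this with the definitional identifications $\R\eta=\g_{-2}$ and $V=\g_{-1}$, the sum appearing in the statement becomes
\[
\R\eta + V + \sum_{i\ge 0} W^{(i\mathrm{m})} = \g_{-2}\oplus\g_{-1}\oplus\bigoplus_{i\ge 0}\g_i = \g,
\]
so it is literally the Tanaka universal prolongation of $(\g_-,\g_0)$. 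Transporting the Tanaka grading back through these identifications gives precisely $\deg\eta=-2$, $\deg V=-1$, and $\deg W^{(i\mathrm{m})}=i$ for all $i\ge 0$, which is the assertion of the corollary.

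The only point requiring genuine justification is the compatibility of the Lie bracket with the modified-prolongation description of the summands, i.e. the fact that the graded structure inherited from $\g$ really restricts to a well-defined bracket on $\R\eta+V+\sum_i W^{(i\mathrm{m})}$. But this is exactly what Lemma \ref{Tanmodlem} establishes: its proof shows that the Tanaka defining condition \eqref{tan:pr}, restricted to $x_1,x_2\in\g_{-1}=V$, reproduces the modified Spencer condition \eqref{modv}, and conversely that \eqref{modv} forces \eqref{tan:pr} to hold on all of $\g_-$. Hence no additional verification of the Jacobi identity or of closedness of the bracket is needed here — it is inherited verbatim from the Tanaka prolongation. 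Thus the corollary needs no new computation beyond assembling the identifications above, and the main (and only) substantive ingredient is Lemma \ref{Tanmodlem}, whose dimension hypothesis $\dim V\ge 4$ is exactly what we assume.
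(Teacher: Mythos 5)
Your proposal is correct and follows exactly the route the paper intends: the corollary is stated as an immediate consequence of Lemma~\ref{Tanmodlem}, and your argument simply spells out the identification $\R\eta+V+\sum_{i\ge0}W^{(i\text{m})}=\g_{-2}\oplus\g_{-1}\oplus\bigoplus_{i\ge0}\g_i$ with the Tanaka prolongation, from which the graded Lie algebra structure is inherited. Your observation that the operative hypothesis is $\dim V\ge 4$ (as used in the lemma's proof, despite the lemma's statement reading $\dim W\ge 4$) is also accurate.
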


We can considered the graded analogs $S_{\gr}$ and $\widetilde
S_{\gr}$ of Spencer operators $S_p$ and $\widetilde S_p$. Namely,
let $S_{\gr}$ be the Spencer operator of a pair $(\gr V, \mathfrak
s)$ and let $\widetilde S_{\gr}$ be the modified Spencer operator of
the quintuple $(\gr V, \gr \bar v_1, \gr \bar v_2,\mathfrak w,
\mathfrak s)$, where, as before, $\mathfrak s$ is the symbol of our
bundle P, i.e. $\gr W_p=\mathfrak s$ for any $p\in P$.

In general, operators $S_p$ and $S_{\gr}$ or $\widetilde S_p$ and
$\widetilde S_{\gr}$ are different and even operate on different
spaces. However, they are closely related so that it is possible to
carry prolongation procedure for the considered frame bundles on
corank 1 filtered distribution in a similar way as for
$G$-structures.

For any two filtered vector spaces $V,W$ the spaces $V^*$,
$\Hom(V,W)$, $S^kV$, $\wedge^kV$ are also naturally endowed with a
filtration. In all these cases the associated graded vector spaces
are naturally isomorphic to $(\gr V)^*$, $\Hom(\gr V,\gr W)$,
$S^k(\gr V)$ and $\wedge^k(\gr V)$. Moreover, any subspace $U\subset
V$ also inherits filtration together with a natural embedding of
associated graded vector space $\gr U$ into $\gr V$. In the
following we shall freely use these identifications.

Directly from definitions one can show that
the Spencer operator $S_p$ preserves the filtration. The same is
true for the modified Spencer operator. Indeed, for this we actually
have to show that if $v_1\in V^{(j_1)}$, $v_2\in V^{(j_2)}$, and the
map $\vf\in \bigl(\Hom (V, W_p)\bigr)^{(i)}$, then
\begin{equation}
\label{moddeg0} \widetilde\omega_p(v_1,v_2) S_p(\vf) (\bar v_1,\bar
v_2)\in V^{(j_1+j_2+i)}.
\end{equation}
For this assume that $\bar v_1\in V^{(\bar j_1)}$ and  $\bar v_2\in
V^{(\bar j_2)}$. Then from the fact that $S_p$ preserves filtration
it follows that $\widetilde\omega_p(v_1,v_2) S_p(\vf) (\bar v_1,\bar
v_2)\in V^{(\bar j_1+\bar j_2+i)}$. So, if $j_1+j_2\geq\bar j_1+\bar
j_2$, we are done. On the other hand, by definition of the form
$\widetilde w_{p, \gr}$ it follows that $\bar j_1+\bar j_2=0$. So,
if $j_1+j_2<\bar j_1+\bar j_2$, then $j_1+j_2< 0$, but then by
condition (2) on filtration \eqref{filtM} one has
$\widetilde\omega_p(v_1,v_2)=0$. Hence \eqref{moddeg0} holds also in
the case $j_1+j_2<\bar j_1+\bar j_2$.

Besides, it is easy to show that  both $S_{\gr}$ and $\widetilde
S_{\gr}$ are the morphisms of degree $0$ of graded spaces.
\begin{lem}\label{lem_gr}
The operators $S_{\gr}$ and $\widetilde S_{\gr}$ coincides with the
graded operators $\gr S_p$ and $\gr\widetilde S_p$ from ${\rm
gr}\Hom(V,W_p)$ to $\gr\Hom(\wedge^2 V,V) $, associated with the
operators $S_p$ and $\widetilde S_p$ under natural identification of
$\gr\Hom(V,W_p)$ with $\Hom(\gr V,\s)$ and $\gr\Hom(\wedge^2 V,V)$
with $\Hom(\wedge^2\gr V,\gr V)$.
\end{lem}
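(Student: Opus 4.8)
The plan is to exploit the fact, established just before the statement, that both $S_p$ and $\widetilde S_p$ preserve the filtrations on $\Hom(V,W_p)$ and $\Hom(\wedge^2V,V)$. Consequently each induces a degree-zero map $\gr S_p$, $\gr\widetilde S_p$ between the associated graded spaces, and these are precisely the operators named in the lemma once we use the identifications $\gr\Hom(V,W_p)\cong\Hom(\gr V,\s)$ and $\gr\Hom(\wedge^2V,V)\cong\Hom(\wedge^2\gr V,\gr V)$. First I would treat the unmodified operator. The formula $S_p(\vf)(v_1,v_2)=\vf(v_1)v_2-\vf(v_2)v_1$ is assembled from two filtration-compatible operations, the evaluation $\Hom(V,W_p)\times V\to W_p$ and the action $\gl(V)\times V\to V$, whose associated graded maps are exactly the evaluation $\Hom(\gr V,\s)\times\gr V\to\s$ and the action $\gl(\gr V)\times\gr V\to\gr V$. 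Choosing homogeneous representatives $\vf,v_1,v_2$ of homogeneous $\bar\vf,\bar v_1,\bar v_2$ and reading off leading terms gives $\gr S_p(\bar\vf)(\bar v_1,\bar v_2)=\bar\vf(\bar v_1)\bar v_2-\bar\vf(\bar v_2)\bar v_1=S_{\gr}(\bar\vf)(\bar v_1,\bar v_2)$, which is the first assertion.

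For the modified operator I would write $\widetilde S_p=S_p-R_p$, where $R_p(\vf)(v_1,v_2)=\widetilde\omega_p(v_1,v_2)\,c\,S_p(\vf)(\bar v_1,\bar v_2)$ with the fixed constant $c=\widetilde\omega_p(\bar v_1,\bar v_2)^{-1}$, and correspondingly split $\widetilde S_{\gr}=S_{\gr}-R_{\gr}$; by the first part it then suffices to prove $\gr R_p=R_{\gr}$. Here I would record the two compatibilities of the form coming from the isotropy built into $\F_V(\mathfrak D)$: one has $\widetilde\omega_p(v_1,v_2)=0$ whenever $\deg v_1+\deg v_2<0$ (since then $v_1\in(V^{(\deg v_2)})^\angle$), while for $\deg v_1+\deg v_2=0$ the value $\widetilde\omega_p(v_1,v_2)$ depends only on $\gr v_1,\gr v_2$ and equals $\mathfrak w(\gr v_1,\gr v_2)$. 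Moreover the chosen $\bar v_1,\bar v_2$ satisfy $\mathfrak w(\gr\bar v_1,\gr\bar v_2)\ne0$, which forces $\deg\bar v_1+\deg\bar v_2=0$, so that $c=\mathfrak w(\gr\bar v_1,\gr\bar v_2)^{-1}$ is exactly the graded normalization and $S_p(\vf)(\bar v_1,\bar v_2)\in V^{(i)}$ for $\vf$ of degree $i$.

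The remaining point, and the place where care is genuinely needed, is that $R_p$ is not a naive ``same formula'' operator: it couples the filtration-dependent scalar $\widetilde\omega_p(v_1,v_2)$ to the fixed-degree vector $S_p(\vf)(\bar v_1,\bar v_2)$, so one must check degree by degree that the graded image of $R_p(\vf)(v_1,v_2)$ in $V^{(j_1+j_2+i)}/V^{(j_1+j_2+i-1)}$ reproduces $R_{\gr}$, where $j_a=\deg v_a$. I would split on the sign of $j_1+j_2$: if $j_1+j_2<0$ then both $\widetilde\omega_p(v_1,v_2)$ and $\mathfrak w(\gr v_1,\gr v_2)$ vanish; if $j_1+j_2>0$ then $S_p(\vf)(\bar v_1,\bar v_2)\in V^{(i)}\subset V^{(j_1+j_2+i-1)}$, so $R_p(\vf)(v_1,v_2)$ already lies in the lower filtration piece and has zero graded image, matching $\mathfrak w(\gr v_1,\gr v_2)=0$; and if $j_1+j_2=0$ then $\widetilde\omega_p(v_1,v_2)=\mathfrak w(\gr v_1,\gr v_2)$ and the leading term of $S_p(\vf)(\bar v_1,\bar v_2)$ is $S_{\gr}(\bar\vf)(\gr\bar v_1,\gr\bar v_2)$ by the first part, so the graded image equals $\mathfrak w(\gr v_1,\gr v_2)\,c\,S_{\gr}(\bar\vf)(\gr\bar v_1,\gr\bar v_2)=R_{\gr}(\bar\vf)(\bar v_1,\bar v_2)$. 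This yields $\gr R_p=R_{\gr}$ and hence $\gr\widetilde S_p=\widetilde S_{\gr}$. The only real obstacle is this bookkeeping for the correction term; the unmodified part is the standard fact that an operator built from filtration-preserving natural operations passes to the associated graded by the same formula.
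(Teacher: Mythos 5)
Your proposal is correct and follows essentially the same route as the paper: the classical Spencer operator passes to the graded level directly from the definitions, and the modified operator is handled by isolating the correction term $\Xi_p(\vf)(v_1,v_2)=\widetilde\omega_p(v_1,v_2)\,S_p(\vf)(\bar v_1,\bar v_2)/\widetilde\omega_p(\bar v_1,\bar v_2)$ and checking its graded image by a case analysis on the sign of $\deg v_1+\deg v_2$, exactly as in the paper's proof of Lemma~\ref{lem_gr}. Your slightly finer three-way split (the paper merges $j_1+j_2<0$ and $j_1+j_2=0$ into one case) and the explicit remark that $\mathfrak w(\gr\bar v_1,\gr\bar v_2)\neq0$ forces $\deg\bar v_1+\deg\bar v_2=0$ are both consistent with, and implicit in, the paper's argument.
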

\begin{proof}
For the classical Spencer operator the statement of the lemma
follows directly from definitions. Let us check it for the modified
Spencer operator. For this we have to check that if
$\Xi_p\colon\Hom(V, W_p)\mapsto \Hom(V\wedge V, V)$ is defined by
$$\Xi_p(\vf)(v_1,v_2)=\widetilde\omega_p(v_1,v_2)\frac{S_p(\vf) (\bar v_1,\bar
v_2)}{\widetilde\omega_p(\bar v_1,\bar v_2)},$$ then
\begin{equation}
\label{grxi} \gr \Xi_p (\psi)(x_1,x_2)=\mathfrak w
(x_1,x_2)\frac{S_{\gr}(\psi) (\gr \bar v_1, \gr \bar v_2)}{\mathfrak
w(\gr \bar v_1,\gr \bar v_2)}.
\end{equation}
Take, as before, $v_1\in V^{(j_1)}$, $v_2\in V^{(j_2)}$ and assume
that $\bar v_1\in V^{(\bar j_1)}$ and  $\bar v_2\in V^{(\bar j_2)}$.
Then $\bar j_1+\bar j_2=0$ and it is not difficult to show that
\begin{equation*}
\begin{aligned}
~&\gr\Xi_p(\psi)(\gr v_1,\gr v_2)=0,~&~ \text{if } j_1+j_2>0,\\
~&\gr\Xi_p(\psi)(\gr v_1,\gr v_2)=\mathfrak w (\gr v_1,\gr
v_2)\frac{S_{\gr}(\psi) (\gr \bar v_1, \gr \bar v_2)}{\mathfrak
w(\gr \bar v_1,\gr \bar v_2)},& \text{if } j_1+j_2\leq 0.
\end{aligned}
\end{equation*}
Taking into account the definition of the graded skew-symmetric form
$\mathfrak w$, we get that the last two relations are equivalent to
\eqref{grxi}.
\end{proof}

In the following we shall also need the \emph{normalization
conditions} for the prolongation procedure. These conditions are
formally defined as any graded subspace $\gr N\subset\Hom(\wedge^2
\gr V,\gr V)$ such that:
\[
\Hom(\wedge^2 \gr V,\gr V) = \im \widetilde S_{gr} \oplus \gr N.
\]

Now let us prove the following general lemma:
\begin{lem}\label{lem_abc}
Let $\Upsilon\colon A\to B$ be a mapping of arbitrary filtered
vector spaces $A,B$ preserving the filtration. Let $\gr \Upsilon
\colon \gr A\to\gr B$ be the associated mapping of the corresponding
graded vector spaces. Then we have:
\begin{enumerate}
\item $\gr (\ker \Upsilon)\subset \ker (\gr\Upsilon)$;
\item if $C$ is any subspace in $B$ such that
\begin{equation}
\label{groplus} \gr C \oplus \im \gr \Upsilon = \gr B,
\end{equation}
 then $C + \im
\Upsilon = B$;
\item under the assumptions of the previous items, the space $\gr \Upsilon^{-1}(C)$ does not depend on $C$ and coincides with
$\ker (\gr \Upsilon)$.
\end{enumerate}
\end{lem}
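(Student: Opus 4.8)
The plan is to prove the three assertions in the order given, since (3) will rely on the surjectivity statement of (2). Throughout I equip every subspace with its induced filtration, e.g. $(\ker\Upsilon)^{(i)}=\ker\Upsilon\cap A^{(i)}$, $C^{(i)}=C\cap B^{(i)}$, and $(\Upsilon^{-1}(C))^{(i)}=\Upsilon^{-1}(C)\cap A^{(i)}$. I will use repeatedly that for a subspace $U\subseteq A$ the natural map $\gr U\to\gr A$ is injective (its degree-$i$ component has kernel $(U\cap A^{(i-1)})/U^{(i-1)}=0$), so that $\gr U$ is canonically a graded subspace of $\gr A$. I also recall that $\gr\Upsilon$ is defined by $\gr\Upsilon(\gr a)=\gr(\Upsilon(a))$, which is well defined precisely because $\Upsilon$ preserves the filtration, and that all filtrations are bounded, so inductions on the filtration degree terminate. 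Assertion (1) is then immediate from the definitions: a homogeneous element of $\gr(\ker\Upsilon)$ of degree $i$ is represented by some $a\in A^{(i)}$ with $\Upsilon(a)=0$, whence $\gr\Upsilon(\gr a)=\gr(\Upsilon(a))=0$ and $\gr a\in\ker(\gr\Upsilon)$.

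For (2) the approach is a successive-approximation (lifting) argument by induction on the filtration degree. The inductive claim is that $B^{(j)}\subseteq C+\im\Upsilon$ for every $j$; the base case holds since $B^{(j)}=0$ for $j$ small enough. For the inductive step, given $b\in B^{(j)}$ I pass to its class $\bar b$ in $(\gr B)_j=B^{(j)}/B^{(j-1)}$ and use the hypothesis $\gr C\oplus\im\gr\Upsilon=\gr B$ to write $\bar b=\gr c+\gr\Upsilon(\gr a)$ with representatives $c\in C\cap B^{(j)}$ and $a\in A^{(j)}$. Then $b-c-\Upsilon(a)\in B^{(j-1)}$, which lies in $C+\im\Upsilon$ by the inductive hypothesis, and therefore $b\in C+\im\Upsilon$. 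Taking $j$ maximal gives $C+\im\Upsilon=B$.

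For (3), under the standing assumption $\gr C\oplus\im\gr\Upsilon=\gr B$, I would prove the two inclusions $\gr(\Upsilon^{-1}(C))\subseteq\ker(\gr\Upsilon)$ and $\ker(\gr\Upsilon)\subseteq\gr(\Upsilon^{-1}(C))$ separately; since neither $\ker(\gr\Upsilon)$ nor the conclusion depends on $C$, independence of $C$ follows immediately. The first inclusion is easy: if $a\in A^{(i)}$ with $\Upsilon(a)\in C$, then $\gr\Upsilon(\gr a)$ is the class of $\Upsilon(a)\in C\cap B^{(i)}$, which lies in $(\gr C)_i\cap(\im\gr\Upsilon)_i=0$ by the direct-sum hypothesis, so $\gr a\in\ker(\gr\Upsilon)$. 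The reverse inclusion is the main obstacle. Given $a\in A^{(i)}$ representing an element of $\ker(\gr\Upsilon)$, i.e. with $\Upsilon(a)\in B^{(i-1)}$, I must produce $a'\equiv a\pmod{A^{(i-1)}}$ satisfying $\Upsilon(a')\in C$; equivalently I must show $\Upsilon(a)\in C+\Upsilon(A^{(i-1)})$. This requires a filtered refinement of (2), namely the sharper statement that $B^{(j)}=(C\cap B^{(j)})+\Upsilon(A^{(j)})$ for every $j$. I would establish this by rerunning the induction of (2) while keeping all chosen representatives inside the correct filtration levels $C\cap B^{(j)}$ and $A^{(j)}$, which is possible because the decomposition $\bar b=\gr c+\gr\Upsilon(\gr a)$ is homogeneous of degree $j$ and the correction term lands in $B^{(j-1)}$. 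Granting this refinement with $j=i-1$ yields $\Upsilon(a)=c+\Upsilon(a'')$ with $c\in C$ and $a''\in A^{(i-1)}$, so that $a'=a-a''$ has $\Upsilon(a')=c\in C$ and $\gr a'=\gr a$, completing the inclusion and hence the lemma.
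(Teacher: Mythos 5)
Your proposal is correct and follows essentially the same route as the paper: the same degree-by-degree lifting induction for item (2), and for the reverse inclusion in item (3) the same filtered refinement $B^{(k-1)}=(C\cap B^{(k-1)})+\Upsilon(A^{(k-1)})$, which the paper obtains by applying item (2) to the restriction of $\Upsilon$ to $A^{(k-1)}$ rather than by rerunning the induction, an immaterial difference.
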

\begin{proof}~

{\bf 1)} Suppose that $a\in A^{(k)}$ and $\Upsilon(a) = 0$. Then
$\gr \Upsilon (a + A^{(k-1)}) = \Upsilon(a) + B^{(k-1)}=0$ and $a +
A^{(k-1)}\in \gr A$ lies in the kernel of $\gr\Upsilon$.

{\bf 2)} Let $b$ be any element in $B^{(k)}$. Then by assumption the
element $b + B^{(k-1)}\in \gr B$ uniquely decomposes as $(c +
C^{(k-1)}) + (\Upsilon(a+A^{(k-1)}))$ for some elements $c +
C^{(k-1)}\in\gr C$ and $a+A^{(k-1)}\in\gr A$. Hence, we see that $(b
- c - \Upsilon(a))$ lies in $B^{(k-1)}$. Proceeding by induction we
get that $b = c'+\Upsilon(a')$ for some elements $c'\in C$ and
$a'\in A$.

{\bf 3)} Let $a\in\Upsilon^{-1}(C)\cap A^{(k)}$. Then $\gr\Upsilon(a
+ A^{(k-1)})$ lies in $\gr C$ and, hence, is equal to $0$. Thus, we
have $\gr \Upsilon^{-1}(C)\subset \ker(\gr\Upsilon)$.

To prove the opposite inclusion $\ker(\gr\Upsilon)\subset\gr
\Upsilon^{-1}(C)$ we actually have to show that for any $a\in
A^{(k)}$, satisfying $\Upsilon(a)\in B^{(k-1)}$, there exist $a'\in
A^{(k)}$ such that  $a-a'\in A^{(k-1)}$ and $\Upsilon (a')\in C$.
For this let $\Upsilon_{k-1}$ be the restriction of $\Upsilon$ to
$A^{(k-1)}$. Then from \eqref{groplus} it follows that $gr
C^{(k-1)}\oplus \im \gr \Upsilon_k=B^{(k-1)}$. Hence, by the
previous item of the lemma we have $$C^{(k-1)}+\im
\Upsilon_{k-1}=B^{(k-1)}.$$ From this and the assumption that
$\Upsilon(a)\in B^{(k-1)}$ it follows that there exist $c_{k-1}\in
C^{(k-1)}$ and $a_{k-1}\in A^{(k-1)}$ such that
$\Upsilon(a)=c_{k-1}+\Upsilon(a_{k-1})$. Therefore, as required $a'$
one can take $a'=a-a_{k-1}$. Indeed, $a'-a=a_{k-1}\in A^{(k-1)}$ and
$\Upsilon(a')=\Upsilon(a-a_{k-1})=c_{k-1}\in C$. This completes the
proof of the third item of the lemma.
\end{proof}

As a direct consequence of the two previous lemmas we get

\begin{lem}\label{lem_sp}
Let
\begin{align*}
\widetilde S_p\colon & \Hom(V,W_p)\to\Hom(\wedge^2 V, V),\\
\widetilde S_{gr}\colon & \Hom(\gr V,\s)\to \Hom(\wedge^2\gr V, \gr
V)
\end{align*}
be the modified Spencer operators associated with the quintuples
$(V,\bar v_1, \bar v_2, \widetilde \omega_p, W_p)$ and \\
$(\gr V, \gr \bar v_1, \gr \bar v_2,\mathfrak w, \mathfrak s)$
respectively.
\begin{enumerate}
\item
The subspace $\gr\ker \widetilde S_p\subset \Hom(\gr V,\s)$
associated with the subspace $\ker \widetilde S_p\subset
\Hom(V,W_p)$ lies in $\ker S_{gr}$. In other words, $\gr
W_p^{(1\text{m})}$ is contained in $\s^{(1\text{m})}$.
\item
Let $N\subset \Hom(\wedge^2 V, V)$ be any subspace such that the
associated graded space $\gr N\subset \Hom(\wedge^2\gr V, \gr V)$ is
complimentary to $\im S_{gr}$. Then $N + \im S = \Hom(\wedge^2 V,
V)$.
\item
Let $N$ be as in the previous item, and let
\[
W_{p,N}^{(1\rm{m})} = \{ \phi \in \Hom(V,W_p)\mid \widetilde
S_p(\phi)\in N\}.
\]
Then the associated graded space $\gr W_{p,N}^{(1\text{m})}$ does
not depend on $N$ and coincides with $\s^{(1\text{m})}=\ker
\widetilde S_{gr}$.
\end{enumerate}
\end{lem}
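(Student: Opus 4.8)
The entire lemma is a direct consequence of the two preceding general lemmas (Lemma~\ref{lem_gr} and Lemma~\ref{lem_abc}), so the strategy is to apply each item of Lemma~\ref{lem_abc} to the modified Spencer operator $\widetilde S_p$, using Lemma~\ref{lem_gr} to identify its graded companion with $\widetilde S_{gr}$.  The key preliminary observation, already established in the text preceding the lemma, is that $\widetilde S_p$ preserves the filtration; this is exactly what allows us to form its graded map $\gr\widetilde S_p$ and to invoke Lemma~\ref{lem_abc}, whose hypothesis is precisely a filtration-preserving map of filtered vector spaces.  By Lemma~\ref{lem_gr} we have the crucial identification $\gr\widetilde S_p=\widetilde S_{gr}$ under the natural isomorphisms $\gr\Hom(V,W_p)\cong\Hom(\gr V,\mathfrak s)$ and $\gr\Hom(\wedge^2 V,V)\cong\Hom(\wedge^2\gr V,\gr V)$.

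\textbf{Item (1).}  I would set $A=\Hom(V,W_p)$, $B=\Hom(\wedge^2 V,V)$, and $\Upsilon=\widetilde S_p$ in Lemma~\ref{lem_abc}(1).  That item gives $\gr(\ker\widetilde S_p)\subset\ker(\gr\widetilde S_p)$.  Rewriting $\ker\widetilde S_p=W_p^{(1\text{m})}$ and $\ker(\gr\widetilde S_p)=\ker\widetilde S_{gr}=\mathfrak s^{(1\text{m})}$ via Lemma~\ref{lem_gr}, this reads $\gr W_p^{(1\text{m})}\subset\mathfrak s^{(1\text{m})}$, which is the assertion.  (Here I note that the statement of item (1) in the lemma contains a typo, writing $\ker S_{gr}$ where $\ker\widetilde S_{gr}$ is meant; the proof uses the modified operator throughout.)

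\textbf{Item (2).}  This is the content of Lemma~\ref{lem_abc}(2) applied to the \emph{classical} Spencer operator $S_p$.  Taking $C=N$ and $\Upsilon=S_{gr}$, the hypothesis $\gr N\oplus\im S_{gr}=\Hom(\wedge^2\gr V,\gr V)$ matches \eqref{groplus}, and the conclusion $N+\im S_p=\Hom(\wedge^2 V,V)$ follows at once, since $\gr S_p=S_{gr}$ (the classical half of Lemma~\ref{lem_gr}).

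\textbf{Item (3).}  This is the genuine payoff and follows from Lemma~\ref{lem_abc}(3).  I would take $C=N$ and $\Upsilon=\widetilde S_p$, so that $\Upsilon^{-1}(C)=W_{p,N}^{(1\text{m})}$ by definition of the latter.  Lemma~\ref{lem_abc}(3) asserts that $\gr\Upsilon^{-1}(C)$ is independent of $C$ and equals $\ker(\gr\Upsilon)$; translating, $\gr W_{p,N}^{(1\text{m})}$ is independent of $N$ and equals $\ker\widetilde S_{gr}=\mathfrak s^{(1\text{m})}$.  I anticipate the only subtle point is verifying that the graded complementarity hypothesis of Lemma~\ref{lem_abc}(3) is met for $\widetilde S_p$: item (3) of Lemma~\ref{lem_abc} presumes the setup of item (2), namely $\gr C\oplus\im\gr\Upsilon=\gr B$, and one must check that the same normalization space $N$ chosen to be complementary to $\im S_{gr}$ is also complementary to $\im\widetilde S_{gr}$.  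This is where the relationship between $S_{gr}$ and $\widetilde S_{gr}$ enters: since $\widetilde S_{gr}$ differs from $S_{gr}$ only by the rank-one correction term built from $\mathfrak w$ and the fixed $(\gr\bar v_1,\gr\bar v_2)$, their images have the same complements in the relevant graded degrees, so any $N$ as in item (2) serves in item (3) as well.  Once this compatibility of normalization conditions is noted, the three assertions are immediate corollaries, and no further computation is required.
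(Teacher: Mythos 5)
Your overall strategy coincides with the paper's: the paper offers no argument beyond the phrase ``as a direct consequence of the two previous lemmas'', and your reduction of each item to the corresponding item of Lemma~\ref{lem_abc}, with Lemma~\ref{lem_gr} supplying the identification $\gr\widetilde S_p=\widetilde S_{gr}$ and the filtration-preservation of $\widetilde S_p$ (verified in the text just before) supplying its hypothesis, is exactly what is intended. Items (1) and (2) are handled correctly, including your observation that $\ker S_{gr}$ in item (1) must be read as $\ker\widetilde S_{gr}$.

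The one genuine problem is the bridge you build in item (3). You claim that, since $\widetilde S_{gr}$ differs from $S_{gr}$ only by the rank-one correction term, ``their images have the same complements''. This is false in general. First, every element of $\im\widetilde S_{gr}$ vanishes when evaluated on the pair $(\gr\bar v_1,\gr\bar v_2)$ --- the correction term is designed precisely to kill this evaluation --- whereas $\im S_{gr}$ typically contains maps not annihilated by it, so the two images are genuinely different subspaces. Worse, $\ker S_{gr}=\s^{(1)}$ and $\ker\widetilde S_{gr}=\s^{(1\text{m})}$ need not have the same dimension (the modified prolongation contains the classical one and is in general strictly larger), so the two images can have different dimensions, and then no subspace can be complementary to both. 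The correct resolution is that the untilded $S_{gr}$ and $S$ in items (2)--(3) are the paper's shorthand for $\widetilde S_{gr}$ and $\widetilde S_p$: this reading is forced by the definition of the normalization conditions given just before Lemma~\ref{lem_abc}, which requires $\gr N$ to be complementary to $\im\widetilde S_{gr}$, and by the fact that item (3) would otherwise fail already for dimension reasons. With that reading no comparison of the two images is needed; item (3) is Lemma~\ref{lem_abc}(3) applied verbatim to $\Upsilon=\widetilde S_p$ and $C=N$, exactly as you apply it, and your extra compatibility argument should simply be deleted.
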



Note that in general $\ker \widetilde S_p=W_p^{(1\text{m})}$ has
smaller dimension than $\ker \widetilde S_{gr}=\mathfrak
s^{(1\text{m})}$. But we shall need only the fact that $\ker
\widetilde S_{\gr} = 0$ implies $\ker \widetilde S_p=0$. Besides,
although the space $\gr W_{p,N}^{(1\text{m})}$ depends in general on
a choice of a pair of vectors $(\bar v_1,\bar v_2)$ from $V$,
satisfying \eqref{grcond}, from item 3 of the previous lemma it
follows that $\gr W_{p,N}^{(1\text{m})}$ is independent not only of
the subspace $N$ but also of the choice of this pair.

Now everything is ready to describe the prolongation procedure for
the  $W$-structure $P$ of frames with constant graded skew-symmetric
form on $\mathfrak D$.
Using filtrations on $V$ and $W$, define a filtration on the space
$V\oplus W$ as follows: First consider the following filtration:
\begin{equation*}
V\oplus W = V^{(I)}\oplus W \supset  V^{(I-1)}\oplus W\supset
\ldots\supset  V^{(-I-1)}\oplus W\supset\dots\supset
V^{(-I_1)}\oplus W=W
\end{equation*}
Second, take
the filtration on $W$ and make a shift of the indices (the weights)
of their subspaces such that the index of $W$ itself will be equal
$-I_1$. The final filtration on $V\oplus W$ is obtained by pasting
together these two filtration.

Fix a pair of vectors $(\bar v_1, \bar v_2)$ from $V$, satisfying
\eqref{grcond}. We call this pair the \emph{initial pair for
prolongation}.  Fix an arbitrary subspace $N\subset \Hom(\wedge^2 V,
V)$ such that the corresponding subspace $\gr N\subset \Hom(\wedge^2
\gr V, \gr V)$ is complimentary to $\im S_{gr}$, and, thus,
satisfies the conditions of Lemma~\ref{lem_sp}. The \emph {first
prolongation of $P$ (subordinated to the subspace N and the initial
pair $(\bar v_1, \bar v_2)$)} is a subbundle $P^{(1)}$ of $Q$
consisting of all subspaces $H_p$ complementary to $W_p$ in
$\mathfrak D^{(1)}(p)$ such that $C(H_p)\in N$.
The bundle $P^{(1)}$ can be naturally identified with a subbundle of
the frame bundle $\F_{V\oplus W}(\mathfrak D^{(1)})$. For this let
$\{\psi_p\}_{p\in P}$ be a smooth family $\{\psi_p\}_{p\in P}$ of
isomorphisms $\psi_p: W\to
W_p$, as in Definition \ref{Wstr}. Then for any point $p^{(1)}=(p,
H_p)\in P^{(1)}$ one can define a linear map $L_{p^{(1)}}\colon
V\oplus W\to T_pP$ such that $(L_{p^{(1)}})^{-1}|_{H_p}= p^{-1}\circ
d_p\pi_{H_p}$ and $L_{p^{(1)}}|_W=\psi_p$, where in the first
relation $p$ is identified with an isomorphism from $V$ to
$\mathfrak D\bigl(\pi(p)\bigr)$ as in the beginning of the section.
The skew-symmetric form on $V\oplus W$ corresponding to a point
$p^{(1)}$ is such that its restriction to $V$ coincides with the
form $\tilde \omega_p$ and the subspace $W$ belongs to its kernel.
It implies that the bundle $P^{(1)}$ has a constant graded
skew-symmetric form as well and that the initial pair of vectors
$(\bar v_1,\bar v_2)$ satisfies the relation analogous to
\eqref{grcond} for this graded skew-symmetric form.
Furthermore, the tangent space to the fiber of $P^{(1)}$ can be
identified with the subspace $W_{p,N}^{(1)}$ as in item~(3) of
Lemma~\ref{lem_sp}. In particular, from the same item it follows
that the first prolongation $P^{(1)}$ has constant symbol.

How to identify the tangent spaces to the fibers of $P^{(1)}$ at
different points?
Namely, we have to identify subspaces $W_{p_1,N}^{(1)}$ and
$W_{p_2,N}^{(1)}$ for different $p_1,p_2\in P$. For this let  $N'$
be any subspace of $\Hom(V,W)$ such that $\gr N'$ is complimentary
to $\mathfrak s^{(1)}$. Then, since $\gr W_{p_,N}^{(1)}$ is equal to
$\mathfrak s^{(1)}$ for all $p\in P$, we see that $N'$ is
complimentary to $W_{p_,N}^{(1)}$. Thus, by fixing an appropriate
subspace $N'$, we can identify all spaces $W_{p_,N}^{(1)}$ with
$W^1=\Hom(V,W)/N'$. The space $W^1$ has the natural filtration,
induced by the filtration on $\Hom(V,W)$. The identifications
between $W_{p_,N}^{(1)}$ and $W^1$ preserves the filtrations on this
spaces.

As a conclusion, starting with $W$-structure $P$ and fixing two
spaces $N$ and $N'$ as above, one can define the first prolongation
$P^{(1)}$ of $P$ such that it is endowed with $W^1$-structure for an
appropriate space $W^1$ and it has constant graded skew-symmetric
form. In the same way, fixing appropriate spaces $N_1$ and $N_1'$
one can define the second prolongation $P^{(2)}=(P^{(1)})^{(1)}$ and
so on. The sequence  $(N, N', N_1,N_1',\ldots)$ is called a
\emph{sequence of defining spaces for the prolongation procedure}.

Assume now that for some $k\in \mathbb N$ the $k$th modified
prolongation $\mathfrak s^{(k\text{m})}$ is equal to zero. It
implies that choosing a sequence of
defining subspaces we will get a canonical Ehresmann connection on
the $k$th-prolongation $P^{(k)}$, subordinated to the chosen
sequence of defining subspaces, and, consequently, the canonical
frame on the corresponding corank 1 distribution $\mathfrak D^{(k)}$
of $P^{(k)}$. This frame can be extended to the canonical frame on
$P^{(k)}$ by taking the Lie brackets of the vector fields in the
frame, corresponding to the initial pair $(\bar v_1, \bar v_2)$.
More precisely, a frame on $\mathfrak D^{(k)}$ is a family of
isomorphisms $p^{(k)}\colon V\oplus W\oplus W^1\ldots\oplus
W^{k-1}\mapsto \mathfrak D^{(k)}(p^{(k)})$. The additional vector
field, which extends this frame to the frame on $P^{(k)}$, can be
taken as $[p^{(k)}(v_1), p^{(k)}(v_2)]$, i.e. one can extend the map
$p^{(k)}$ from the vector space $V\oplus W\oplus W^1\ldots\oplus
W^{k-1}$ to the vector space $V\oplus W\oplus W^1\ldots\oplus
W^{k-1}\oplus\mathbb R\nu$ such that $p^{(k)}(\nu)=[p^{(k)}(v_1),
p^{(k)}(v_2)]$. Our constructions immediately imply the following
\begin{thm}
\label{prolongthm} Assume that $P$ is  a $W$-structure with a
constant graded skew-symmetric form and a constant symbol $\mathfrak
s$.
If the modified $k$th prolongation $\mathfrak s^{(k\text{m})}$ of
$\mathfrak s$ vanishes for some $k\in \N$, then with each such $P$,
an initial pair of vectors for prolongation, and a sequence of
defining spaces one can associate a canonical frame on a $k$th
prolongation $P^{(k)}$ of $P$.
Two such $W$-structures are locally equivalent if and only if their
canonical frames corresponding to the same initial pair of vectors
for prolongation and the same sequence of defining spaces are
locally equivalent.
\end{thm}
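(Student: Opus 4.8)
The plan is to run the one-step construction of the preceding paragraphs iteratively and to read off both assertions once the symbol dies. First I would prove by induction on $i\ge 0$ that each prolongation $P^{(i)}$, built with the $i$-th entries $(N_i,N_i')$ of the chosen sequence of defining spaces, is again a $W$-structure (with an appropriate model space $W^i$) on the corank $1$ distribution $\mathfrak D^{(i)}$, carrying a constant graded skew-symmetric form and a constant symbol equal to $\mathfrak s^{(i\text{m})}$ (with the convention $\mathfrak s^{(0\text{m})}:=\mathfrak s$). The base case $i=0$ is exactly the hypothesis on $P$. For the inductive step I would apply the first-prolongation analysis to $P^{(i)}$: by \eqref{transstruct} the structure function $C$ transforms by the modified Spencer operator, so the normalization $C(H_p)\in N_i$ cuts out a non-empty fibre (Lemma~\ref{lem_sp}, item 2), whose tangent space is $W_{p,N_i}^{(1\text{m})}$, and item 3 of the same lemma identifies the associated graded with $\bigl(\mathfrak s^{(i\text{m})}\bigr)^{(1\text{m})}=\mathfrak s^{((i+1)\text{m})}$, independently of $N_i$ and of the initial pair. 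Realizing $P^{(i+1)}$ inside $\F_{V\oplus W\oplus\dots}(\mathfrak D^{(i+1)})$, and observing that its induced skew-symmetric form restricts to $\widetilde\omega_p$ on the $V$-part while annihilating the remaining summands, shows that $P^{(i+1)}$ again has a constant graded skew-symmetric form for which $(\bar v_1,\bar v_2)$ satisfies \eqref{grcond}; hence the procedure can be repeated.

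Second, I would invoke the vanishing hypothesis. Once $\mathfrak s^{(k\text{m})}=0$, the graded tangent space to the fibres of $P^{(k)}\to P^{(k-1)}$ is zero, and since a filtered space with vanishing associated graded is itself zero, these fibres are $0$-dimensional; the affine structure on the fibres of $Q$ then forces $P^{(k)}$ to be a section over $P^{(k-1)}$. This section assigns to each point a unique horizontal complement $H_p$ of $W_p$ in $\mathfrak D^{(k-1)}(p)$ with $C(H_p)\in N_{k-1}$, i.e. a canonical Ehresmann connection subordinate to the chosen defining data. Combining this horizontal distribution with the partial soldering form $\theta$ and the fixed fibre identifications $\psi_p$ of the successive $W^j$-structures trivializes $\mathfrak D^{(k)}$ canonically, yielding a canonical frame on $\mathfrak D^{(k)}$; adjoining the transversal direction $\nu$ with $p^{(k)}(\nu)=[p^{(k)}(\bar v_1),p^{(k)}(\bar v_2)]$, which is transversal to $\mathfrak D^{(k)}$ by \eqref{neq}, extends it to an absolute parallelism on $P^{(k)}$. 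This establishes the first assertion.

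For the equivalence statement I would exploit that every ingredient above — the soldering form, the structure function $C$, the normalization $C(H_p)\in N_i$, and the vertical framings $\psi_p$ — is defined intrinsically from a $W$-structure once the initial pair and the sequence of defining spaces are fixed. Hence a local isomorphism of two such $W$-structures $P_1,P_2$ lifts canonically and uniquely, step by step, to isomorphisms $P_1^{(i)}\to P_2^{(i)}$ commuting with all projections, and in top degree it carries the canonical parallelism of $P_1^{(k)}$ onto that of $P_2^{(k)}$; this gives the ``only if'' direction. Conversely, an absolute parallelism is rigid, so a diffeomorphism matching the canonical frames on $P_1^{(k)}$ and $P_2^{(k)}$ preserves every frame component, hence the distribution $\mathfrak D^{(k)}$ together with the integrable subdistributions whose leaves are the fibres of $P^{(k)}\to P^{(k-1)}\to\dots\to P\to\mathcal M$; descending along this tower produces a local equivalence of $P_1$ and $P_2$, giving the ``if'' direction.

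The step I expect to be the main obstacle is the descent in the converse direction: one must check that the flag of ``vertical at level $j$'' subspaces is recoverable purely from the canonical coframe — as spans of canonically specified coframe vectors forming integrable subdistributions — and that horizontal reconstruction along the tower returns precisely the reduction $P$ rather than some larger frame bundle. Verifying that the coframe encodes the entire tower of projections, so that any coframe-preserving map is forced to respect it and to descend to the original reduction, is where the genuine content of the equivalence claim lies; everything else is a formal consequence of the Spencer-operator lemmas already established.
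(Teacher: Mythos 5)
Your proposal is correct and follows essentially the same route as the paper, which presents the theorem as an immediate consequence of the iterated prolongation construction: each $P^{(i)}$ is again a $W^i$-structure with constant graded skew-symmetric form and constant symbol $\mathfrak s^{(i\text{m})}$ (via Lemma~\ref{lem_sp} and the transformation rule \eqref{transstruct}), and vanishing of $\mathfrak s^{(k\text{m})}$ collapses the top fibre to a canonical Ehresmann connection, whose horizontal lift plus the vertical identifications $\psi_p$ and the bracket $[p^{(k)}(\bar v_1),p^{(k)}(\bar v_2)]$ yield the absolute parallelism. The descent argument for the converse of the equivalence statement, which you rightly flag as the delicate point, is likewise left implicit in the paper, so your treatment is no less complete than the original.
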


The last theorem shows that in order to prove the existence of
canonical frame on certain prolongation of the bundle $P$ it is
sufficient to analyze the modified prolongations of its symbol only.

\section{Symbols and prolongations of $W$-structures for rank 3 distributions}
\setcounter{equation}{0}

 Let, as above, $V$ be a vector space endowed
with a skew-symmetric bilinear form $\sigma$. Note that we do not
require it to be non-degenerate, although most of the computations
in this section will be done for non-degenerate form. Let, as above,
$W$ be any subspace in $\gl(V)$. Again, in all computations below
this will be actually a subalgebra in $\mathfrak{csp}(V)$.

Let us recall that according to~\eqref{modv} the modified
prolongation of the subspace $W$ is a subspace $W^{(1m)}$ in
$\Hom(V,\g)$ consisting of all maps $\phi\colon V\to W$, for which
we can find a vector $v\in V$ such that the following identity is
satisfied:
\begin{equation}\label{genprol}
\phi(v_1)v_2-\phi(v_2)v_1 = \sigma(v_1, v_2)v,\quad \text{for all
}v_1,v_2\in V.
\end{equation}
Note that if such vector $v\in V$ exists for a given map $\phi$,
then it is unique. So, we can always identify $W^{(1m)}$ with a
subspace in $\Hom(W,V)\times V$ complimentary to the second summand
and consisting of all pairs $(\phi,v)$ satisfying~\eqref{genprol}.

It is clear that the standard first prolongation $W^{(1)}$ of $W$,
defined as a set of all maps $\phi'\colon W\to V$ satisfying the
equation:
\[
\phi'(v_1)v_2 - \phi'(v_2)v_1 = 0,\text{for all }v_1,v_2\in V,
\]
is a subspace in the modified prolongation, as it corresponds to all
pairs $(\phi,v)$ from the modified prolongation with $v=0$.

\subsection{The symbol for non-rectangular diagrams}
Let us describe the symbol $\s_{k,l}$ for the $W$-structure
$P_{k,l}$ with $l>0$. Let $r=2k+l-1$ and let $V$ be a vector space
with a basis $\{e_1,\dots,e_r,f_1,\dots,f_r\}$.

Let us define linear maps $X,H,Y,Z_1,Z_2\in \End(V)$ as follows:
\begin{align*}
Xe_i &= e_{i+1}, \ Xf_i = f_{i+1}, (i=1,\dots,r-1),\quad Xe_r=Xf_r=0;\\
He_i &= (r+1-2i)e_i, Hf_i=(r+1-2i)f_i, (i=1,\dots,r);\\
Ye_i &= (i-1)(r+1-i)e_{i-1}, Yf_i = (i-1)(r+1-i)f_{i-1},
(i=2,\dots,r),\quad
Ye_1=Yf_1=0;\\
Z_1e_i &= e_i; Z_1f_i = -f_i, (i=1,\dots,r);\\
Z_2e_i &= e_i; Z_2f_i = f_i, (i=1,\dots,r).
\end{align*}

It is easy to check that the elements $X,H,Y$ form a basis of a
three-dimensional subalgebra in $\gl(V)$ isomorphic to $\sll(2,\R)$.
Both elements $Z_1$ and $Z_2$ commute with this subalgebra. Denote
by $\ag$ the 5-dimensional subalgebra in $\gl(V)$ generated by these
elements. Note also that the decomposition $V=V_e\oplus V_f$ is
stable with respect to the action of $\ag$. Denote also by $\ag_0$
the subalgebra in $\ag$ of codimension~$1$ generated by elements
$H,Y,Z_1,Z_2$.

According to Definition~\ref{quasisymp} define also a symplectic
form $\sigma$ on $V_e\oplus V_f$ by the formula
\begin{align*}
\sigma(e_i,e_j) &= \sigma(f_i,f_j)=0;\\
\sigma(e_i,f_{r+1-i}) &=(-1)^i;\\
\sigma(e_i,f_j) &=0,\ i+j\ne r+1.
\end{align*}
We can see immediately that all five endomorphisms $X,H,Y,Z_1,Z_2$
preserve the symplectic form up to a scalar multiplier.

Consider now the action of $\sll(2,\R)$ (generated by $X,H,Y$) on
the space of all linear maps $\Hom(V_f,V_e)\subset\gl(V)$. The
symplectic form $\sigma$ gives us an isomorphism of
$\sll(2,R)$-modules $V_f^*$ and $V_e$. Hence, the
$\sll(2,\R)$-module $\Hom(V_f,V_e)$ is naturally isomorphic to
$V_f^*\otimes V_e=V_e\otimes V_e$ and decomposes into irreducible
$\sll(2,\R)$-submodules $\Pi_{2r-2}$, $\Pi_{2r-4}$, \dots, $\Pi_2$,
$\Pi_0$ of dimensions $2r-1$, $2r-3$, \dots, $3$, $1$ respectively.

Under identification of $V_f^*$ with $V_e$, the subspace of all
elements from $\Hom(V_f,V_e)$ that preserve the symplectic form is
naturally isomorphic to $S^2(V_e)$ and decomposes as
$\sll(2,\R)$-module into the sum of submodules $\Pi_{2r-2}$,
$\Pi_{2r-6}$, \dots, $\Pi_2$ (for even $r$) or $\Pi_0$ (for odd
$r$).

Define the subspace $\pg\subset S^2(V_e)\subset \Hom(V_f,V_e)\subset
\End(V)$ as the sum of irreducible $\sll(2,\R)$-submodules
$\Pi_{2r-4k+2}=\Pi_{2l}$, $\Pi_{2l-4}$, \dots, $\Pi_2$ (or $\Pi_0$).

\begin{thm}
\label{nonrecthm}
 The symbol $\s_{k,l}$, corresponding to the
non-rectangular diagram of type $(k,l)$,  is equivalent to the
subalgebra $\ag + \pg\subset\End(V)$, which in turn is equal to the
algebra of infinitesimal symmetries of the corresponding flat curve
$\mathfrak F_{k,l}$.
\end{thm}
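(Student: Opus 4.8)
The statement packages three identifications: the symbol $\s_{k,l}=\gr W_p$ of the bundle $P_{k,l}$, the explicit algebra $\ag+\pg\subset\End(V)$, and the infinitesimal symmetry algebra of the flat curve $\mathfrak F_{k,l}$. The plan is to establish $\s_{k,l}=\ag+\pg$ by computing explicitly the operators produced in Lemma~\ref{constsymblem}, matching them to $\ag+\pg$ by weights and dimension, and then to deduce the coincidence with the symmetry algebra from the constancy of the symbol evaluated on the flat curve.

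First I would record the algebraic structure of $\ag+\pg$. A short bracket computation shows it is a graded subalgebra of $\mathfrak{csp}(V)$: the elements $X,H,Y,Z_1,Z_2$ close up (with $\langle X,H,Y\rangle\cong\sll(2,\R)$ and $Z_1,Z_2$ central in $\ag$), the subspace $\pg$ is $\ad\sll(2,\R)$-invariant by construction and satisfies $[Z_1,A]=2A$, $[Z_2,A]=0$ for $A\in\pg$, while $[\pg,\pg]=0$ since any two maps in $\pg\subset\Hom(V_f,V_e)$ compose to zero. With respect to the filtration of $V$ the generators are homogeneous of filtration degrees $+1,0,-1,0,0$, and $\pg$ occupies the negative degrees; this fixes $\dim(\ag+\pg)=5+\dim\pg=5+\sum_{r=0}^{[l/2]}(2l-4r+1)$. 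Next I would compute the $\ag$-part of the symbol: the operators attached to $\mathcal Z_1,\mathcal Z_2,\mathcal X,\mathcal H,\mathcal Y$ (shown constant in the proof of Proposition~\ref{immersion}) are identified, up to normalization, with $Z_1,Z_2,X,H,Y$ — the scaling $(e,f)\mapsto(\exp^{s}e,\exp^{-s}f)$ gives $Z_1$, the form-scaling gives $Z_2=\mathrm{Id}$, the shift of the projective parameter gives $X$ (on the flat curve $e_i'=e_{i+1}$), and the $\mathrm{ST}(2,\R)$-action spans $\langle H,Y\rangle$ — so the $\ag$-part of $\s_{k,l}$ is exactly $\ag$.

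For the complementary directions, formula~\eqref{priflat} shows that on the flat curve $\mathcal I_{\mathcal P_{r,i}}$ kills $V_e$ and sends $f_j\mapsto U_{rij}e_{j+l-2r-i}$. Such an operator is symmetric for $\sigma$, hence lies in $S^2(V_e)\subset\Hom(V_f,V_e)$, and is an $\ad H$-eigenvector of weight $2(2r+i-l)$ (its filtration degree being $-2r-i$ as in Lemma~\ref{constsymblem}). As $(r,i)$ range over $0\le r\le[l/2]$, $0\le i\le 2l-4r$, these weights fill, with multiplicity one, exactly the weight multiset of $\bigoplus_{r=0}^{[l/2]}\Pi_{2l-4r}=\pg$. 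The main obstacle is precisely to upgrade this weight bookkeeping to the honest equality $\mathrm{span}\{\gr\mathcal P_{r,i}\}=\pg$: a weight vector of given weight in $S^2(V_e)=\Pi_{2R-2}\oplus\Pi_{2R-6}\oplus\cdots$ (with $R=2k+l-1$) is not determined by its weight, so one must locate each $\gr\mathcal P_{r,i}$ in the correct $\sll(2,\R)$-isotypic summand. I would resolve this by showing $\mathrm{span}\{\gr\mathcal P_{r,i}\}$ is $\ad\sll(2,\R)$-invariant: the fields $\mathcal X,\mathcal H,\mathcal Y$ integrate to the reparametrization $\mathrm{SL}(2,\R)$-action on $P_{k,l}$, whose adjoint action on $\gr W_p$ restricts to $\ad_{\langle X,H,Y\rangle}$ and preserves the complement of $\ag$; being $\sll(2,\R)$-invariant, nonzero by Lemma~\ref{constsymblem}, and of the correct weights, the span must equal $\pg$. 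An alternative, purely computational route is to evaluate the leading constants $U_{rij}$ and verify that the top operator of each $r$-family is annihilated by $\ad X$, exhibiting it as the highest weight vector of $\Pi_{2l-4r}$. Either way, together with the $\ag$-part this gives $\s_{k,l}\subseteq\ag+\pg$, and the dimension count $\dim\s_{k,l}=5+\#\{\mathcal P_{r,i}\}=\dim(\ag+\pg)$ forces equality.

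Finally, for the identification with the infinitesimal symmetries of $\mathfrak F_{k,l}$, I would use that the symbol is constant (Lemma~\ref{constsymblem}) and evaluate it on the flat curve, where every $\mathcal I_{\mathcal P_{r,i}}$ together with $X,H,Y,Z_1,Z_2$ is already homogeneous, so $W_p=\gr W_p=\s_{k,l}$ there; since the symmetry group of the flat curve acts on its normal frames, $W_p$ is identified with its symmetry algebra. Conversely I would verify the inclusion $\ag+\pg\subseteq$ (symmetries) generator-by-generator: $X$ generates the curve, $H,Y$ act as projective reparametrizations, $Z_1,Z_2$ preserve the flag~\eqref{flag} and commute with $X$, and each $A\in\pg$ satisfies the infinitesimal symmetry condition because the iterated brackets $A,[X,A],[X,[X,A]],\dots$ remain in $\R X$ plus the stabilizer of the base flag. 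The dimension count then closes the argument, so all three objects coincide, as claimed. I expect the submodule-separation step to be the principal difficulty of the whole proof.
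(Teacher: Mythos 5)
Your strategy is sound and reaches the right conclusion, but it is genuinely different from the paper's proof, and at the point you single out as the principal difficulty the paper takes a much shorter path. The paper never computes the symbol operators $\mathcal I_{\mathcal P_{r,i}}$ inside this proof. It works entirely with the symmetry algebra of the flat curve: writing $\sym(\mathfrak F_{k,l})=\sym_0(\mathfrak F_{k,l})+\R X$, it invokes the characterization from \cite{doukom} of the stabilizer of an orbit curve as the largest subalgebra $\hg\subset\g_0$ with $[X,\hg]\subset\hg+\R X$; the inclusion $\ag_0+\pg\subseteq\sym_0(\mathfrak F_{k,l})$ is then immediate because $[X,\ag_0]\subset\ag$ and $[X,\pg]\subset\pg$ ($\pg$ being a sum of $\sll(2,\R)$-submodules), with no isotypic-component analysis at all. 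The upper bound is the same dimension count you use, via Theorem~\ref{propi0}: $\dim\sym_0\le 4+\sum_{r=0}^{[l/2]}(2l-4r+1)=\dim(\ag_0+\pg)$. The identification with the symbol then comes for free: by this count the fiber of $P_{k,l}$ over the flat curve is an orbit of $\Sym_0(\mathfrak F_{k,l})$, so the $W$-structure is an honest $G$-structure there, $W_p=\sym(\mathfrak F_{k,l})$ is already graded, and constancy of the symbol (Lemma~\ref{constsymblem}) transports the answer to every curve of type $(k,l)$. Your route --- pinning down $\mathrm{span}\{\gr\mathcal I_{\mathcal P_{r,i}}\}=\pg$ by $\sll(2,\R)$-equivariance plus weight bookkeeping --- can be made to work, and in fact more robustly than you state: since $V_e\otimes V_e$ is multiplicity-free and $\Lambda^2V_e$ contains no summand isomorphic to any $\Pi_{2l-4s}$, an $\ad\sll(2,\R)$-invariant subspace of $\Hom(V_f,V_e)$ with the weight multiset of $\bigoplus_s\Pi_{2l-4s}$ is forced to equal $\pg$, so your unjustified assertion that each $\mathcal I_{\mathcal P_{r,i}}$ is $\sigma$-symmetric (which would require relations among the constants $U_{rij}$) is not actually needed. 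The real cost of your approach is the invariance claim itself: one must check that conjugation by the reparametrization flow preserves the foliation $\mathrm{Fol}$ and its tangent spaces modulo $\langle\mathcal Z_1,\mathcal Z_2\rangle$, which is precisely the work the paper's bracket criterion sidesteps. What you gain is independence from the external reference \cite{doukom}; what the paper gains is that its only nontrivial input is the parameter count of Theorem~\ref{propi0}, which is already proved.
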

\begin{proof}
To prove the theorem, we can consider any curve with the required
diagram. We shall consider the flat curve $\mathfrak F_{k,l}$
as a simplest possible case and prove that in this case the
$W$-structure is in fact a standard $G$-structure with a Lie algebra
$\g=\ag+\pg$.


Let us compute the symmetry group $\Sym(\mathfrak F_{k,l})$ of
$\mathfrak F_{k,l}$ consisting of all linear transformations of $V$
that preserve the form $\sigma$ up to the constant and map the curve
$\mathfrak F_{k,l}$ to itself. It is easy to see that this is a
well-defined closed Lie subgroup in $GL(V)$. Denote by
$\Sym_0(\mathfrak F_{k,l})$ the subgroup of $\Sym(\mathfrak
F_{k,l})$ stabilizing the flag~\eqref{flag} and by $\sym(\mathfrak
F_{k,l})$ and $\sym_0(\mathfrak F_{k,l})$ the corresponding
subalgebras in $\gl(V)$. Let $\g_0$ be the subalgebra in $\gl(V)$
consisting of all linear maps that stabilize the flag~\eqref{flag}.
Note, that, in particular, $\ag_0+\pg\subset\g_0$. It is easy to see
that $\sym_0(\mathfrak F_{k,l})$ is a Lie algebra of the subgroup of
$\Sym(\mathfrak F_{k,l})$ that stabilizes the point $\mathfrak
F_{k,l}(0)$. As $\mathfrak F_{k,l}$ is one-dimensional,
$\sym_0(\mathfrak F_{k,l})$ has codimension 1 in $\sym(\mathfrak
F_{k,l})$, and, hence, $\sym(\mathfrak F_{k,l})=\sym_0(\mathfrak
F_{k,l})+\R{X}$.

According to~\cite{doukom}, the algebra $\sym_0(\mathfrak F_{k,l})$
can be defined as a largest subalgebra $\hg$ in $\g_0$ that
satisfies the condition $[X,\hg]\subset\hg+\R{X}$. We shall use this
characterization to prove that in fact $\sym_0(\mathfrak
F_{k,l})=\ag_0+\pg$. As $[X,\ag_0]\subset\ag=\ag_0+\R{X}$ and
$[X,\pg]\subset\pg$, we see that the subalgebra $\ag_0+\pg$ does
satisfy the above condition, and, thus, lies in $\sym_0(\mathfrak
F_{k,l})$.

On the other hand, the dimension of $\sym_0(\mathfrak F_{k,l})$ can
not exceed the dimension of the space of all $(k,l)$-quasisymplectic
frames at the point $\mathfrak F_{k,l}(0)$. As follows from
Proposition~\ref{propi0}, the space of all $(k,l)$-quasisymplectic
frames has dimension $1+\sum_{r=0}^{[l/2]} (2l-4r+1)$. Taking into
account the two-dimensional space of projective reparametrizations
leaving a point $\mathfrak F_{k,l}(0)$ fixed, and a one-parameter
group of all scalings of the symplectic form $\sigma$, we see that
the space of all $(k,l)$-quasisymplectic frames has dimension
$4+\sum_{r=0}^{[l/2]} (2l-4r+1)$. But we have $\dim\ag_0=4$ and
$\dim\pg =\sum_{r=0}^{[l/2]} (2l-4r+1)$. This proves that
$\sym_0(\mathfrak F_{k,l})=\ag_0+\pg$ and, thus, $\sym(\mathfrak
F_{k,l})=\ag+\pg$.

Hence, the $W$-structure of all $(k,l)$-quasisymplectic frames
associated with a flat curve is a standard $G$-structure with
$G=\Sym_0(\mathfrak F_{k,l})$. In particular, any quasi-symplectic
moving frame is in fact symplectic in case of the flat curve.

\end{proof}

Let us prove now that this symbol is always of finite type and
compute its prolongation explicitly in case $k=2$ (and $l>0$).

\begin{lem}\label{genpr_i0}
The modified prolongation $(\s_{k,l})^{(1m)}$ coincides with the
standard prolongation $\pg^{(1)}$ of the subspace $\pg$.
\end{lem}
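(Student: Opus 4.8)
The plan is to prove the two inclusions separately, the inclusion $\pg^{(1)}\subseteq(\s_{k,l})^{(1m)}$ being immediate and the reverse one being the substance of the lemma. By Theorem~\ref{nonrecthm} I identify $\s_{k,l}$ with $\ag+\pg$ and work with the splitting $V=V_e\oplus V_f$, with respect to which every element of $\ag$ is block-diagonal while every element of $\pg\subset\Hom(V_f,V_e)$ sends $V_f$ into $V_e$ and annihilates $V_e$. For the trivial inclusion I note that any $\phi'\in\pg^{(1)}$, viewed inside $\Hom(V,\s_{k,l})$, satisfies the defining identity~\eqref{genprol} of the modified prolongation with associated vector $v=0$, since the standard prolongation is exactly the $v=0$ part of the modified one.

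For the reverse inclusion I would fix $\phi\in(\s_{k,l})^{(1m)}$ with associated vector $v=v_e+v_f$ and write $\phi(w)=a(w)+p(w)$ with $a(w)\in\ag$, $p(w)\in\pg$, denoting by $a_e(w),a_f(w)$ the induced operators on $V_e,V_f$. First I project~\eqref{genprol} onto $V_f$ and specialize the arguments. With one argument in $V_e$ and one in $V_f$ this yields $a_f(x)\,y=\sigma(x,y)\,v_f$, so each $a_f(x)$ with $x\in V_e$ is an operator of rank at most one lying in $\ag_f=\sll(2,\R)+\R\,\mathrm{Id}$. Since $r=2k+l-1\ge 3$ and this algebra acts irreducibly on $V_f$, every nonzero element of it has rank at least $r-1\ge 2$ (its zero eigenspace is at most one-dimensional); hence $a_f(x)=0$ for all $x\in V_e$, and nondegeneracy of $\sigma$ then forces $v_f=0$. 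With both arguments in $V_f$ the same projection shows that $y\mapsto a_f(y)$ lies in the standard prolongation of $\ag_f$, which is trivial because $\ag_f$ is the image of an irreducible $\gl(2,\R)$ in dimension $\ge 3$; thus $a_f\equiv 0$, whence $a(w)\in\R(Z_1+Z_2)$, say $a(w)=c(w)(Z_1+Z_2)$, so that $a_e(w)=2c(w)\,\mathrm{Id}$.

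Next I would project~\eqref{genprol} onto $V_e$. Both arguments in $V_e$ give $c(x_1)x_2=c(x_2)x_1$, hence $c\equiv 0$ on $V_e$; one argument $x\in V_e$ and one $y\in V_f$ give the identity $p(x)\,y=\sigma(x,y)\,v_e+2c(y)\,x$. The key step is now to exploit that $p(x)\in\pg\subset S^2V_e$, so the bilinear form $(y,y')\mapsto\sigma\bigl(p(x)y,y'\bigr)$ is symmetric; imposing this symmetry on the last identity and using nondegeneracy of $\sigma$ forces $2c(y)=\sigma(v_e,y)$ and therefore $p(x)=v_e\odot x$ for all $x$. It remains to rule out $v_e\ne 0$. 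If $v_e\ne 0$, then after replacing $v_e$ by $g\,v_e$ for a generic $g\in SL(2,\R)$ (which preserves both $\pg$ and the relation $v_e\odot V_e\subseteq\pg$) I may assume the highest-weight component of $v_e$ is nonzero; the top $H$-weight part of $v_e\odot e_1$ is then a nonzero multiple of the highest weight vector $e_1\odot e_1$ of the summand $\Pi_{2r-2}$ of $S^2V_e$. But $\Pi_{2r-2}$ does not occur in $\pg$, which starts at $\Pi_{2l}=\Pi_{2r-4k+2}$ with $k\ge 2$, contradicting $v_e\odot e_1\in\pg$. Hence $v_e=0$, so $v=0$, $c\equiv 0$, $a\equiv 0$, and $p|_{V_e}=0$.

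Finally, taking both arguments of the $V_e$-projection in $V_f$ yields $p(y_1)y_2=p(y_2)y_1$, which together with $p(V)\subseteq\pg$ and $p|_{V_e}=0$ is exactly the condition defining $\pg^{(1)}$ inside $\gl(V)$; so $\phi\in\pg^{(1)}$ and the reverse inclusion follows. I expect the main obstacle to be the final weight argument excluding the summand $\Pi_{2r-2}$, i.e. verifying that $v_e\odot V_e\subseteq\pg$ is impossible for $v_e\ne 0$: the reduction to a vector with nonzero highest-weight component followed by the comparison of top $H$-weights in $S^2V_e$ is the cleanest route. The two auxiliary facts used along the way — the lower bound $r-1$ on the rank of nonzero elements of $\ag_f$ and the vanishing of the first prolongation of the irreducible $\gl(2,\R)$ — are standard and follow from the same weight bookkeeping.
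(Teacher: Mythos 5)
Your proof is correct, and its skeleton---splitting $\phi$ into its $\ag$- and $\pg$-components and testing \eqref{genprol} on pairs drawn from the isotropic pieces $V_e$, $V_f$ and on mixed pairs---is the same as the paper's. The genuine difference is in the decisive step that kills the vector $v$ and the values $\phi(V_e)$. The paper observes that $\phi'(v_e)\in\pg$ must equal the rank-$\le 2$ map $v_f\mapsto\sigma(v_e,v_f)v+\beta(v_f)v_e$ and invokes the Corollary to Lemma~\ref{fulton} (a forward reference, proved later via the ideal of the rational normal curve) that $\pg\subset I_2(C)$ contains no nonzero element of rank $\le 2$. You instead exploit that elements of $\pg\subset S^2(V_e)$ define \emph{symmetric} forms on $V_f$, which pins the mixed-case identity down to $p(x)=v_e\odot x$ exactly, and then exclude $v_e\ne 0$ by an $H$-weight computation: the weight-$(2r-2)$ line of $S^2(V_e)$ lies in $\Pi_{2r-2}$, which is absent from $\pg=\Pi_{2l}\oplus\cdots$ because $k\ge 2$. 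This makes the lemma independent of the algebraic-geometric input, using only the defining decomposition of $\pg$ into irreducible $\sll(2,\R)$-modules; on the other hand, $v_e\odot v_e$ is already a rank-$\le 2$ element of $\pg$, so your computation in effect re-derives a special case of what the paper's corollary supplies wholesale. Two small points: first, you justify the vanishing of the first prolongation of the irreducible $\gl(2,\R)$ by ``dimension $\ge 3$''; by Kobayashi--Nagano the correct threshold is dimension $\ge 4$ (in dimension $3$ the irreducible $\gl(2,\R)$ is conformal and has nonzero prolongation), which is harmless here since $r=2k+l-1\ge 4$ for every non-rectangular diagram---exactly the bound the paper records. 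Second, your rank bound $r-1$ for nonzero elements of $\sll(2,\R)+\R\,\mathrm{Id}$ acting irreducibly is correct (distinct $H$-eigenvalues in the semisimple case, a single Jordan block in the nilpotent case), and it lets you dispose of the mixed $V_f$-projection without a second appeal to Kobayashi--Nagano, which the paper implicitly makes.
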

\begin{proof}
Let $(\phi,v)$ be any element in $(\s_{k,l})^{(1m)}$. Let us note
that the space $\ag+\pg$ is in fact a subalgebra in $\gl(V)$
preserving the subspace $V_e\subset V$. The restriction of this
subalgebra to $V_e$ is 4-dimensional and is generated by the
elements $X_{V_e}$, $H_{V_e}$, $Y_{V_e}$ and $(Z_1)_{V_e}$. In
particular, it is equal to the image of the irreducible embedding of
$\gl(2,\R)$ into $\gl(V_e)$.

Let $(\phi,v)\in (\s_{k,l})^{(1m)}$. Let us prove that $v=0$ and
$\phi$ takes values in $\pg$. Consider the equation~\eqref{genprol}
in the following cases.

\noindent $1^\circ$. $v_1,v_2\in V_e$. As the restriction of the
symplectic form to $V_e$ vanishes, we get
$\phi(v_1)v_2=\phi(v_2)v_1$, which is exactly the equation for the
standard prolongation of the restriction of $\ag_0+\pg$ to $V_e$.
According to the result of Kobayashi--Nagano~\cite{kobnag}, the
first prolongation of the irreducible embedding of $\gl(2,\R)$ is
non-zero only if the dimension of the representation space does not
exceed~$3$. In our case the lowest possible dimension of $V_e$ is
achieved when $k=2,l=1$ and is equal to~$4$. Thus, we see that
$\phi(v_1)v_2=0$ for all $v_1,v_2\in V_e$. In particular, this
implies that $\phi(V_e)$ lies in $\R(Z_1-Z_2)+\pg$.

\noindent $2^\circ$. $v_1,v_2\in V_f$. As $V_f$ is also an isotropic
subspace, we also get $\phi(v_1)v_2=\phi(v_2)v_1$. Considering this
equation modulo $V_e$, we again get that $\phi(v_1)v_2 = 0 \mod
V_e$. This implies that $\phi(V_f)\subset \R(Z_1+Z_2)+\pg$.

Cases 1 and 2 above imply that $\phi$ can be decomposed as follows:
\begin{equation}\label{phipr}
\phi(v_e+v_f)=\phi'(v_e+v_f) + \alpha(v_e)(Z_1-Z_2)/2 +
\beta(v_f)(Z_1+Z_2)/2,\quad \forall v_e\in V_e, v_f\in V_f,
\end{equation}
where $\phi'$ takes values in $\pg$, $\alpha\in V_e^*$ and $\beta\in
V_f^*$.

\noindent $3^\circ$. $v_1=v_e\in V_e$, $v_2=v_f\in V_f$. Then we
get:
\[
\phi(v_e)v_f - \phi(v_f)v_e = \sigma(v_e,v_f) v.
\]
Considering this equation modulo $V_e$ and taking into account that
$\pg$ vanishes on $V_e$ and sends $V_f$ to $V_e$, we get:
\[
\alpha(v_e)v_f=\sigma(v_e,v_f)v \mod V_e, \quad\text{for any }v_e\in
V_e, v_f\in V_f.
\]
As the dimensions of $V_e$ and $V_f$ are at least 4, for any $v_e$
we can find a non-zero vector $v_f\in V_f$ such that
$\sigma(v_e,v_f)=0$. Hence, we see that $\alpha(v_e)=0$. In
particular, we also see that $v\in V_e$.

Let us now fix an arbitrary $v_e\in V_e$. From~\eqref{phipr} we have
$\phi(v_e)=\phi'(v_e)$.  Let us prove that $\phi'(v_e)=0$. Indeed,
we have
\[
\phi'(v_e)v_f = \sigma(v_e,v_f)v + \beta(v_f)v_e, \quad\text{for any
}v_f\in V_f.
\]
The element $\phi'(v_e)$ lies in $\pg$, while the right hand side
defines a certain linear map from $V_f$ to $V_e$, which has rank
$\le 2$. However, as we shall see later (Corollary to
Lemma~\ref{fulton}), the space $\pg$ does not contain any non-zero
elements of rank $2$ or less. Thus, we get $\phi'(v_e)=0$ and $v$ is
proportional to $v_e$. As $v_e$ can be arbitrary and the dimension
of the subspace $V_e$ is at least $4$, this implies that $v=0$ and
$\beta=0$.

Hence, $\phi$ takes values in $\pg$ and the modified prolongation
$(\s_{k,l})^{(1m)}$ coincides with $(\s_{k,l})^{(1)}$.
\end{proof}

\begin{lem}
Let $\sigma$ be any (possibly degenerate) skew-symmetric form on
$V$. Suppose $V$ is decomposed as $E\oplus F$, where both $E$ and
$F$ are isotropic and $F$ has a trivial intersection with kernel of
$\sigma$. Let $W$ be a subspace in $\Hom(F,E)\subset\End(V)$ not
containing elements of rank $1$. Then we have:
\begin{enumerate}
\item the modified prolongation $W^{(1m)}$ coincides with the standard
prolongation $W^{(1)}$;
\item the space $W'=W^{(1m)}=W^{(1)}$, considered as a subspace in $\End(V')$,
$V'=W\times V$, satisfies the conditions of the lemma for the
decomposition $V'=E'\oplus F'$, where $E'=W\times E$, $F'=F$.
\end{enumerate}
\end{lem}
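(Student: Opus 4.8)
The plan is to work directly with the defining identity of the modified prolongation, namely that $\phi\in W^{(1\mathrm m)}$ means there is a (unique) vector $v\in V$ with $\phi(v_1)v_2-\phi(v_2)v_1=\sigma(v_1,v_2)v$ for all $v_1,v_2\in V$, and to exploit the decomposition $V=E\oplus F$ together with the two structural hypotheses ($E,F$ isotropic, $W\subset\Hom(F,E)$ with no rank-$1$ elements). First I would establish item (1) by showing that the extra vector $v$ must vanish. Since every element of $W$ kills $E$ and maps $F$ into $E$, substituting $v_1\in E$, $v_2\in F$ collapses the identity to $\phi(v_1)v_2=\sigma(v_1,v_2)v$. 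For fixed $v_1$ the right-hand side is, as a function of $v_2\in F$, a map of rank at most one with image in $\langle v\rangle$, while the left-hand side is the element $\phi(v_1)\in W$ restricted to $F$; as $W$ contains no rank-$1$ elements, this forces $\phi(v_1)=0$ and $\sigma(v_1,v_2)v=0$ for all $v_1\in E$, $v_2\in F$. Because $F\cap\ker\sigma=0$ and $F$ is isotropic, there is a pair with $\sigma(v_1,v_2)\ne0$ (otherwise $\sigma$ would vanish identically and $F\subset\ker\sigma$), so $v=0$ and the modified identity becomes the standard prolongation identity; the reverse inclusion $W^{(1)}\subset W^{(1\mathrm m)}$ is immediate by taking $v=0$.

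For item (2) I would first record the identification: $W'=W^{(1)}\subset\Hom(V,W)$ acts on $V'=W\oplus V$ by killing $W$ and sending $v\in V$ to $\phi(v)\in W$, and the natural form $\sigma'$ on $V'$ is the one restricting to $\sigma$ on $V$ with $W\subset\ker\sigma'$. A short computation gives $\ker\sigma'=W\oplus\ker\sigma$, so with $E'=W\oplus E$ and $F'=F$ one checks at once that $E'$ and $F'$ are isotropic and that $F'\cap\ker\sigma'=F\cap\ker\sigma=0$. The inclusion $W'\subset\Hom(F',E')$ follows once one notes that any $\phi\in W^{(1)}$ annihilates $E$: putting $v_1\in E$, $v_2\in F$ into the standard identity and using that $\phi(v_2)$ kills $E$ yields $\phi(v_1)|_F=0$, and since $\phi(v_1)\in\Hom(F,E)$ already kills $E$ we get $\phi(v_1)=0$; hence $\phi$ kills $E'=W\oplus E$ and maps $F'=F$ into $W\subset E'$.

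The remaining and most delicate point is showing that $W'$ contains no rank-$1$ element, and this is where the hypothesis on $W$ is genuinely reused. Suppose $\phi\in W^{(1)}$ has rank one in $\End(V')$; since $\phi$ kills $E'$, its rank equals the rank of $\phi|_F\colon F\to W$, so $\phi(v_2)=\xi(v_2)w_0$ for some $0\ne\xi\in F^*$ and $0\ne w_0\in W$. Feeding this into the prolongation symmetry $\phi(v_1)v_2=\phi(v_2)v_1$ for $v_1,v_2\in F$ gives $\xi(v_1)\,w_0v_2=\xi(v_2)\,w_0v_1$; choosing $v_1$ with $\xi(v_1)=1$ shows that $v_2\mapsto w_0v_2$ equals $v_2\mapsto\xi(v_2)(w_0v_1)$, a map of rank at most one, so $w_0\in W$ has rank at most one as an element of $\Hom(F,E)$. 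By hypothesis $w_0=0$, a contradiction, so $W'$ has no rank-$1$ elements. I expect this last rank-transfer argument to be the main obstacle, since it is precisely the step that makes the hypotheses self-reproducing and hence permits the inductive iteration of the lemma along the prolongation tower; the isotropy and kernel bookkeeping in the previous paragraph is routine by comparison.
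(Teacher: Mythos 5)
Your proof is correct and follows essentially the same route as the paper's: part (1) via the rank-$\le 1$ observation for $\phi(v_e)$ applied to $v_1\in E$, $v_2\in F$, and part (2) via the same rank-transfer argument showing a rank-one $\phi|_F$ would force a rank-$\le 1$ element $w_0$ of $W$. Your explicit bookkeeping of the form $\sigma'$ on $V'$ and the isotropy of $E'$, $F'$ is a minor addition the paper leaves implicit, but the substance of the argument is identical.
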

\begin{proof}
Let $(\phi,v)\in W^{(1m)}$. Consider~\eqref{genprol} for $v_e\in E$,
$v_f\in F$. As $\phi(v_f)v_e = 0$, we get $\phi(v_e)v_f =
\sigma(v_e,v_f)v$. So, the element $\phi(v_e)$ is a map of rank $\le
1$. Hence, by assumption of the lemma, we get $\phi(v_e)=0$ for all
$v_e\in V_e$. Since $F$ has a trivial intersection with
$\ker\sigma$, for any non-zero $v_f\in F$ there exists such $v_e\in
E$ that $\sigma(v_e,v_f)\ne 0$. Hence, we also get $v=0$. This
proves that $W^{(1m)}=W^{(1)}$.

Let us prove the second part of the lemma. We have already proved
above that $\phi(v_e) = 0$ for any $v_e\in E$. Hence, $W^{(1m)}$
lies in $\Hom(F',E')\subset\End(V')$. It is sufficient to show that
$W'$ does not have any elements of rank $1$. Suppose, there is such
an element. Then it can be represented as $v_f\mapsto
\alpha(v_f)w_0$ for some non-zero $\alpha\in F^*$, $w_0\in W$.
Again, considering equation~\eqref{genprol} for two vectors
$v_1,v_2\in F$, we get:
\[
\alpha(v_1)w_0(v_2) = \alpha(v_2)w_0(v_1), \quad\text{for all
}v_1,v_2\in F.
\]
We can always choose such $v_0\in F$ that $\alpha(v_0)\ne 0$. Then
we get $w_0(v) = \alpha(v)w_0(v_0)/\alpha(v_0)$ for any $v\in F$.
Hence, $w_0$ has rank $1$ as well, and this contradicts the
assumption of the lemma.
\end{proof}
\begin{cor}
The modified prolongation $(\s_{k,l})^{(im)}$ coincides with the
standard prolongation $\pg^{(i)}$ of the subspace $\pg$ for any
$i\ge 1$.
\end{cor}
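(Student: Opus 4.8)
The plan is to prove the statement by induction on $i\ge 1$, using the preceding lemma on prolongations of rank-one-free subspaces of $\Hom(F,E)$ as the engine. First I would settle the base case $i=1$, which is exactly Lemma~\ref{genpr_i0}. Before invoking it I would verify that the hypotheses of the preceding lemma hold for $W=\pg$: the space $\pg$ carries no nonzero element of rank $1$ (this follows from the Corollary to Lemma~\ref{fulton}, which rules out elements of rank $\le 2$), the subspace $V_f$ is isotropic, and $V_f$ meets $\ker\sigma=0$ trivially since $\sigma$ is nondegenerate. This already yields both $(\s_{k,l})^{(1m)}=\pg^{(1)}$ and $\pg^{(1m)}=\pg^{(1)}$.

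For the inductive step I would exploit the fact that the preceding lemma is stated in a self-propagating form. Its part~(1) identifies the modified and the standard first prolongation of any admissible $W$, while its part~(2) guarantees that the prolonged space $W^{(1)}$, regarded as a subspace of $\End(V')$ for the enlarged space $V'=W\times V$ with the decomposition $E'=W\times V_e$, $F'=F=V_f$, again satisfies the hypotheses of the lemma. I would therefore apply the lemma repeatedly along the tower $\pg,\ \pg^{(1)},\ \pg^{(2)},\dots$, concluding that at every level the modified first prolongation coincides with the standard one, that is $\pg^{(im)}=\pg^{(i)}$ for all $i\ge 1$.

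Finally I would transfer this identity from the tower of $\pg$ to the tower of $\s_{k,l}$. Lemma~\ref{genpr_i0} already collapses the extra summand $\ag$ at the first step: for $(\phi,v)\in(\s_{k,l})^{(1m)}$ one has $v=0$ and $\phi$ valued in $\pg$, so $(\s_{k,l})^{(1m)}=\pg^{(1m)}$. Using the inductive hypothesis $(\s_{k,l})^{(im)}=\pg^{(im)}$ together with the inductive definition of the higher modified prolongations, I would then chain
\[
(\s_{k,l})^{((i+1)m)}=\bigl((\s_{k,l})^{(im)}\bigr)^{(1m)}=\bigl(\pg^{(im)}\bigr)^{(1m)}=\pg^{((i+1)m)}=\pg^{(i+1)},
\]
which closes the induction and gives $(\s_{k,l})^{(im)}=\pg^{(i)}$ for all $i\ge 1$. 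The step I expect to be the genuine obstacle is checking that the reduction from $\s_{k,l}=\ag+\pg$ to $\pg$ persists beyond the first prolongation, rather than reappearing through the $\ag$-directions or through a nonzero vector $v$ at higher levels; the resolution is precisely part~(2) of the preceding lemma, whose rank-one-free condition is preserved under prolongation and thereby excludes any such spurious contributions.
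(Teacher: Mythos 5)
Your proposal is correct and follows essentially the same route as the paper's proof: the base case is Lemma~\ref{genpr_i0}, the hypotheses of the preceding lemma are verified for $\pg$ via the corollary to Lemma~\ref{fulton}, and part~(2) of that lemma is iterated along the tower of prolongations to get $\pg^{(im)}=\pg^{(i)}$, after which the identity is chained back to $\s_{k,l}$ using the inductive definition of the higher modified prolongations. Your write-up merely makes explicit the induction and hypothesis checks that the paper leaves implicit.
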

\begin{proof}
Indeed, it is easy to see that subspace $\pg\subset
\Hom(F,E)\subset\End(V)$ satisfies the condition of the lemma.
Hence, $\pg^{(im)}=\pg^{(i)}$ for all $i\ge 0$. And according to
Lemma~\ref{genpr_i0}, we have
$(\s_{k,l})^{(1m)}=\pg^{(1m)}=\pg^{(1)}$. Hence, $(
\s_{k,l})^{(im)}=\pg^{(im)}=\pg^{(i)}$ for any $i\ge 1$.
\end{proof}

To prove that the standard $N$-th prolongation $\pg^{(N)}$ vanishes
for sufficiently large $N$, we shall accumulate the language of
symplectic geometry and Poisson bracket. Let us introduce the space
$\R^{2r}$ with coordinates $x_1,\dots,x_r,p_1,\dots,p_r$ and the
symplectic from:
\[
dx_1\wedge dp_r - dx_2\wedge dp_{r-1} + \dots + (-1)^{r} dx_r\wedge
dp_1.
\]
Note that the space of all polynomials $\R[x_i,p_j]$ with the
Poisson bracket defined by this symplectic form is a Lie algebra
with one-dimensional center generated by $1$.

Let us identify $V_e\oplus V_f$ with subspace in $\R[x_i,p_j]$
consisting of all polynomials of degree $1$ identifying basis
vectors $e_i$ with polynomials $x_i$ and basis vectors $f_i$ with
$p_i$, $i=1,\dots,r$. It is well-known that $\spg(V_e \oplus V_f)$
can be identified then with the space of all quadratic polynomials
in $\R[x_i,p_j]$, and $k$-th prolongation of $\spg(V_e \oplus V_f)$
with the space of all polynomials of degree $k+2$.

Since the subspace $\pg$ lies in $S^2(V_e)=V_f^*\otimes V_e\subset
\spg(V_e\oplus V_f)$, we see that we can interpret it as a certain
subspace of degree 2 polynomials in $x_1,\dots,x_r$. Let us define
this subspace in a more geometric way using the language of
algebraic geometry. Let $\mathbb{P}V_f$ be the projectivization of
the subspace $V_f$ and let $C$ be the orbit of $\exp(\ag)$ through
the vector $f_1$. As $\ag_0$ and, hence $\exp(\ag_0)$, preserves $\R
f_1$, we see that this orbit is 1-dimensional. It is evident that
$C$ is a closure of the orbit of $\exp(tX)$ and is a rational normal
curve in $\mathbb{P}V_f$.

Let $\P^{r-1}$ be the projective space with homogeneous coordinates
$[x_1:x_2:\ldots:x_r]$. Denote by $C$ the normal rational curve in
$\P^{r-1}$ given as an image of the Veronese embedding
\begin{equation}\label{ratnorm}
\P^1\to \P^{r-1},\quad [s:t]\mapsto
[s^{r-1}:s^{r-2}t:\ldots:t^{r-1}].
\end{equation}
By fixing a rational normal curve $C$ in $\P^{r-1}$ we also fix the
structure of $SL(2,\R)$-module on $\R^r$.

Denote by $\T^nC$ the $n$-th tangential developable variety of $C$.
Here we assume that $\T^0C = C$. If $\V$ is any algebraic variety in
$\P^{r-1}$, we denote, as usual, by $I(\V)$ the ideal of homogeneous
polynomials in $x_1,\dots,x_r$ vanishing on $\V$. We shall also
denote by $I_n(\V)$ the subspace of all polynomials of degree $n$ in
$I(\V)$. Denote also by $\S_n\V$ the $n$-th secant variety of $\V$,
which is defined as an algebraic closure of the union of
$(n-1)$-planes in $\P^{r-1}$ passing through $n$ points from $\V$.
By definition we set $\S_1(\V)=\V$.

Let us recall a standard result from algebraic geometry:
\begin{lem}[\cite{Fulton}]\label{fulton}
Let
\[
S^2(\R^{r,*}) = \sum_{i\ge 0} \Pi_{2r-4i-2}
\]
be the decomposition of the $SL(2,\R)$-module $S^2(\R^{r,*})$ into
the sum of the irreducible submodules, where $\Pi_m$ is a unique
submodule of dimension $m+1$.

Then the space $I_2(\T^sC)$ of all degree $2$ polynomials vanishing
on $\T^sC$ is equal to $\sum_{i\ge s} \Pi_{2r-4i-2}$.
\end{lem}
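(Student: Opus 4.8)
The plan is to exploit that every object in sight is $SL(2,\R)$-equivariant. Since $C$, and hence each osculating developable $\T^sC$, is stable under the irreducible $SL(2,\R)$-action on $\R^r$, the subspace $I_2(\T^sC)$ is an $SL(2,\R)$-submodule of $S^2(\R^{r,*})$. The latter module is multiplicity free, $S^2(\R^{r,*})=\bigoplus_{i\ge0}\Pi_{2r-4i-2}$, so $I_2(\T^sC)$ is automatically a sum of a subcollection of the $\Pi_{2r-4i-2}$, and the whole problem is to decide which summands occur. To do this I would realize $\R^r$ as the space of binary forms of degree $r-1$, so that $C=\{[\ell^{r-1}]\}$ and an affine parametrization $\phi(u)=\ell(u)^{r-1}$ of $C$ has osculating $s$-plane $\langle\phi(u),\phi'(u),\dots,\phi^{(s)}(u)\rangle$ at $u$. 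To a quadric $Q\in S^2(\R^{r,*})$ I attach the associated symmetric bilinear form $B_Q$ and the symmetric bi-form $\tilde W_Q(u,v)=B_Q(\phi(u),\phi(v))$ of bidegree $(r-1,r-1)$; the assignment $Q\mapsto\tilde W_Q$ is an isomorphism of $SL(2,\R)$-modules.

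The key reformulation is geometric. By definition $\T^sC$ is the closure of the union of its osculating $s$-planes, so $Q$ vanishes on $\T^sC$ exactly when $B_Q$ restricts to zero on each such plane, i.e. $B_Q(\phi^{(m)}(u),\phi^{(n)}(u))=0$ for all $0\le m,n\le s$ and all $u$. Writing $g_{mn}(u)=\partial_u^m\partial_v^n\tilde W_Q(u,v)\big|_{v=u}$, I would show this system is equivalent to the single chain $g_{kk}\equiv0$ for $0\le k\le s$, and that the latter is in turn equivalent to the divisibility $(u-v)^{2s+2}\mid\tilde W_Q$. Indeed, by symmetry $\tilde W_Q$ is even in $u-v$, so $\tilde W_Q=\sum_{j\ge0}c_j\,(u-v)^{2j}$ with $c_j$ forms in $u+v$, and an elementary triangular computation gives $g_{kk}=(-1)^k(2k)!\,c_k+(\text{terms in }c_0,\dots,c_{k-1})$; hence $g_{00}=\dots=g_{ss}=0$ iff $c_0=\dots=c_s=0$ iff $(u-v)^{2s+2}$ divides $\tilde W_Q$. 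Thus $I_2(\T^sC)=U_{s+1}$, where $U_i=\{Q:(u-v)^{2i}\mid\tilde W_Q\}$.

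It remains to match the divisibility filtration with the isotypic decomposition. Since the diagonal of $\P^1\times\P^1$ is cut out by the $SL(2,\R)$-invariant determinant, each $U_i$ is a submodule and $U_0\supseteq U_1\supseteq\cdots$. For $Q\in U_i$ the quotient $\tilde W_Q/(u-v)^{2i}$ has bidegree $(r-1-2i,r-1-2i)$, and restricting it to the diagonal defines an $SL(2,\R)$-equivariant map $\mu_i\colon U_i\to\Pi_{2r-4i-2}$ whose kernel is exactly $U_{i+1}$. Therefore $\dim U_i-\dim U_{i+1}\le\dim\Pi_{2r-4i-2}=2r-1-4i$; summing over $i$ and using $\sum_i(2r-1-4i)=\dim S^2(\R^{r,*})$ forces every $\mu_i$ to be onto. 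Consequently $U_i/U_{i+1}\cong\Pi_{2r-4i-2}$, and since $S^2(\R^{r,*})$ is multiplicity free this identifies the submodule $U_i$ with $\bigoplus_{i'\ge i}\Pi_{2r-4i'-2}$. Combining with the previous paragraph gives $I_2(\T^sC)=U_{s+1}=\bigoplus_{i\ge s+1}\Pi_{2r-4i-2}$; in particular the base case $s=0$ recovers the classical quadratic ideal of the rational normal curve (everything except the top piece $\Pi_{2r-2}$), which fixes the threshold.

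I expect the main obstacle to be the geometric reformulation of the second paragraph — translating ``vanishing on the osculating developable $\T^sC$'' into the precise divisibility order $2s+2$ along the diagonal. The inclusion that all $g_{mn}$ with $m,n\le s$ vanish is immediate from $(u-v)^{2s+2}\mid\tilde W_Q$, but the converse requires isolating the balanced derivatives $g_{kk}$ and running the triangular induction to recover $c_0,\dots,c_s$, and one must check that it is exactly order $2s+2$ (not $2s$ or $2s+1$) that is forced, since this is what pins down the index $i\ge s+1$. Once this is in place, the representation-theoretic bookkeeping — multiplicity freeness together with the dimension count establishing surjectivity of the $\mu_i$ — is routine.
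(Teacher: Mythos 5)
Your argument is correct, but it is worth noting that the paper does not prove this lemma at all: it is stated with a bare citation to Fulton--Harris, so any proof you give is necessarily a different route. What you supply is the classical invariant-theoretic argument: realize a quadric $Q$ through its ``Bezoutiant'' $\tilde W_Q(u,v)=B_Q(\phi(u),\phi(v))$, translate vanishing on the osculating $s$-planes into the triangular system $g_{kk}\equiv 0$ and hence into divisibility by $(u-v)^{2s+2}$, and then match the divisibility filtration $U_i$ with the Clebsch--Gordan summands of $S^2(\R^{r,*})$ by the equivariant restriction-to-diagonal maps $\mu_i$ together with a dimension count and multiplicity-freeness. Each step checks out (the identity $\partial_u^k\partial_v^k=(\partial_p^2-\partial_q^2)^k$ in the variables $p=u+v$, $q=u-v$ gives exactly the triangular relation $g_{kk}=(-1)^k(2k)!\,c_k+\cdots$ you assert, and the kernel of $\mu_i$ is $U_{i+1}$ because the quotient $\tilde W_Q/(u-v)^{2i}$ is still symmetric). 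This buys a self-contained proof where the paper offers only a reference.

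One substantive point: your proof yields $I_2(\T^sC)=\bigoplus_{i\ge s+1}\Pi_{2r-4i-2}$, whereas the lemma as printed asserts $\sum_{i\ge s}\Pi_{2r-4i-2}$. Your version is the correct one: for $s=0$ the printed formula would make $I_2(C)$ all of $S^2(\R^{r,*})$, which is absurd, while yours gives the classical quadric ideal of the rational normal curve of dimension $(r-1)(r-2)/2$. Moreover, the paper itself uses the $i\ge s+1$ form: the subsequent theorem identifies $\pg=\Pi_{2l}\oplus\Pi_{2l-4}\oplus\dots=\sum_{i\ge k-1}\Pi_{2r-4i-2}$ with $I_2(\T^{k-2}C)$, which is exactly your threshold with $s=k-2$. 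So the printed index is an off-by-one slip, and your derivation both detects and corrects it.
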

\begin{cor}
The space $I_2(C)$ does not contain non-zero elements of rank $\le
2$.
\end{cor}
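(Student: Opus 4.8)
The plan is to prove this geometrically, bypassing the representation-theoretic bookkeeping of Lemma~\ref{fulton} almost entirely. The key observation is that, under the standard dictionary between quadratic forms and quadrics, a non-zero quadratic form of rank $\le 2$ corresponds to a \emph{reducible} quadric, i.e. a product of two linear forms, and such a quadric simply cannot vanish on a non-degenerate irreducible curve such as the rational normal curve $C$. So the whole statement will follow from two classical facts about $C$: that it is irreducible, and that it spans $\P^{r-1}$.

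First I would make the reduction precise. Recall that elements of $I_2(C)\subset S^2(V_e)$ are quadratic forms, and that their ``rank'' is the rank of the associated symmetric matrix, which under the identification $S^2(V_e)\cong\Hom(V_f,V_e)$ (via $\sigma$) agrees with the rank used in Lemma~\ref{genpr_i0}. Extending scalars to $\mathbb C$, any non-zero quadratic form of rank $\le 2$ factors as a product $\ell_1\ell_2$ of two non-zero linear forms: rank $1$ gives a perfect square $\ell^2$, and rank $2$ gives $\ell_1\ell_2$ with $\ell_1,\ell_2$ independent, as one sees by diagonalizing and writing $x_1^2+x_2^2=(x_1+ix_2)(x_1-ix_2)$. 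Hence, if some non-zero $Q\in I_2(C)$ had rank $\le2$, then $Q=\ell_1\ell_2$; since $Q$ vanishes on $C$ and the real points of $C$ are Zariski dense in the complex rational normal curve, the complexification of $Q$ vanishes there as well, so $C$ lies in the union of hyperplanes $\{\ell_1=0\}\cup\{\ell_2=0\}$.

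Second I would invoke irreducibility and non-degeneracy of $C$. Because $C$ is irreducible it must be contained in a single one of the two hyperplanes, say $C\subseteq\{\ell_i=0\}$ with $\ell_i\ne0$. But $C$ is non-degenerate, i.e. it is contained in no hyperplane of $\P^{r-1}$: from the parametrization $[s:t]\mapsto[s^{r-1}:s^{r-2}t:\ldots:t^{r-1}]$, a relation $\sum a_j x_j=0$ holding on $C$ would force the binary form $\sum a_j s^{r-j}t^{j-1}$ to vanish identically, whence all $a_j=0$ by linear independence of the monomials $s^{r-j}t^{j-1}$. This contradicts $\ell_i\ne0$, so no non-zero element of $I_2(C)$ has rank $\le2$, which is the corollary. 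In particular the same conclusion holds for the subspace $\pg\subseteq I_2(C)$ (since $C\subseteq\T^{k-2}C$ gives $\pg=I_2(\T^{k-2}C)\subseteq I_2(C)$), which is exactly the fact used in the proof of Lemma~\ref{genpr_i0}.

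The only point requiring care — and the ``main obstacle'', such as it is — is not a computation but the bookkeeping between the two notions of rank and the real-versus-complex issue: one must check that the rank of an element of $S^2(V_e)$ as an endomorphism in $\Hom(V_f,V_e)$ coincides with its rank as a quadratic form (it does, via $\sigma$), and that complexifying is harmless because vanishing on the real curve forces vanishing on its complexification. Once these are noted, the argument is purely the elementary projective geometry of the rational normal curve and does not even need the full decomposition of $I_2(\T^sC)$ supplied by Lemma~\ref{fulton}; it uses only that $I_2(C)$ consists of quadrics through the non-degenerate irreducible curve $C$.
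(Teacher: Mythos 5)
Your proof is correct and follows essentially the same route as the paper's: complexify, factor a rank $\le 2$ quadric as a product of two linear forms, and derive a contradiction from the irreducibility and non-degeneracy of the rational normal curve $C$. The extra care you take with the two notions of rank and the real-versus-complex passage is a welcome amplification of details the paper leaves implicit, but it is not a different argument.
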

\begin{proof}
According to the lemma, any element from $I_2(C)$ should vanish on
$C$. Suppose that $I(C)$ contains any element $F$ of rank $\le2$.
Then it lies in a linear combination of polynomials $G^2,GH,H^2$ for
certain homogeneous degree 1 polynomials $G,H$. We can always extend
our base field to $\mathbb{C}$ and decompose $F$ into the product of
two linear polynomials $G'$ and $H'$. As $F$ vanishes on the
rational curve, we see that either $G'$ or $H'$ should also vanish
on $C$. However, this is impossible as $C$ does no lie in any proper
linear subspace in $\P^{r-1}$.
\end{proof}

Using Lemma~\ref{fulton} above we get
\begin{thm}
The subspace $\pg\subset S^2(V_e)$ can be identified with the space
of all quadratic polynomials in $x_1,\dots,x_r$ that vanish at
$(k-2)$-th tangent developable variety of $C$.

The $n$-th prolongation of $\pg$ in contained in $I_{n+2}(\V)$,
where $\V=\S_{n+1}(\T^{k-2}C)$ is the $(n+1)$-th secant variety of
$(k-2)$-th tangential variety to the rational normal curve $C$.
\end{thm}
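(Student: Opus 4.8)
The plan is to prove the two assertions separately, leaning on the Poisson/symplectic dictionary already set up and on Lemma~\ref{fulton}. For the first assertion I would argue purely representation-theoretically. Under the identification $e_i\leftrightarrow x_i$ the subspace $S^2(V_e)\subset\End(V)$ becomes the space of homogeneous quadratic polynomials in $x_1,\dots,x_r$, and $\pg$ was \emph{defined} as the sum of the irreducible $\sll(2,\R)$-submodules $\Pi_{2l},\Pi_{2l-4},\dots$ occurring at the tail of the isotypic decomposition of $S^2(V_e)$. Lemma~\ref{fulton} identifies $I_2(\T^{k-2}C)$ with precisely the same sum of isotypic components. Since both are $\sll(2,\R)$-submodules of $S^2(V_e)$ with identical isotypic decompositions, they coincide, which is the first assertion.

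For the second assertion I would first translate the (standard) prolongation into polynomial language. As recalled above, $\spg(V_e\oplus V_f)$ is identified with the degree-$2$ part of $\R[x_i,p_j]$ under the Poisson bracket, with $\g_{-1}$ the linear polynomials and the $n$-th prolongation of $\spg$ the degree-$(n+2)$ part. Because $\pg\subset S^2(V_e)$ consists of polynomials in the $x_i$ alone, a short induction on $n$ (using that $\{P,p_j\}=\partial P/\partial x_j$ and $\{P,x_i\}=\pm\,\partial P/\partial p_i$ must land in $\pg$, which forces all $p$-dependence of $P$ to disappear) shows that
\[
\pg^{(n)}=\bigl\{P\in S^{n+2}(V_e)\ :\ \partial^{\,n}P/\partial x_{j_1}\cdots\partial x_{j_n}\in\pg\ \text{for all }j_1,\dots,j_n\bigr\}.
\]
In other words, $\pg^{(n)}$ consists of degree-$(n+2)$ forms in $x_1,\dots,x_r$ all of whose $n$-th order partial derivatives vanish on $\T^{k-2}C$.

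The geometric core is then a polarization argument. Write $Y=\T^{k-2}C$ and let $\tilde P$ be the full symmetric $(n+2)$-linear polarization of a given $P\in\pg^{(n)}$. Since every $n$-th partial of $P$ lies in $\pg=I_2(Y)$, the displayed condition is exactly
\[
\tilde P(y,y,u_1,\dots,u_n)=0\qquad\text{for all }y\in Y\ \text{and all }u_1,\dots,u_n.
\]
A general point of $\S_{n+1}(Y)$ lies in the span of $n+1$ points $y_0,\dots,y_n\in Y$, hence has the form $z=\sum_{i=0}^n a_iy_i$. Expanding $P(z)=\tilde P(z,\dots,z)$ multilinearly, each monomial is $\tilde P$ evaluated on the $y_i$ with multiplicities $m_0,\dots,m_n$ summing to $n+2$; since there are only $n+1$ points, pigeonhole forces some $m_{i_0}\ge 2$, and applying the displayed vanishing with $y=y_{i_0}$ and the remaining $n$ slots as the $u$'s annihilates that monomial. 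Thus $P$ vanishes on every such span, and taking the Zariski closure gives $P\in I_{n+2}(\S_{n+1}(\T^{k-2}C))$, which is the desired containment.

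The first assertion and the reduction to partial-derivative conditions are essentially bookkeeping once the Poisson identification is invoked; the step I expect to demand the most care is the induction establishing that $\pg^{(n)}$ genuinely stays inside $S^{n+2}(V_e)$ with no $p$-dependence, so that the partial-derivative description coincides with the true prolongation. Once that is in place the polarization argument is elementary, its only real input being the combinatorial fact $m_0+\dots+m_n=n+2>n+1$ that guarantees a repeated point in each monomial.
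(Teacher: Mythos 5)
Your proof is correct and follows essentially the same route as the paper: the first assertion is read off from Lemma~\ref{fulton} and the definition of $\pg$, the prolongation is translated into the condition that all $n$-th partials of a degree-$(n+2)$ form in $x_1,\dots,x_r$ lie in $I_2(\T^{k-2}C)$, and the vanishing on $\S_{n+1}(\T^{k-2}C)$ is obtained from the pigeonhole fact $n+2>n+1$. Your polarization of $P$ and the paper's restriction of $F$ to a secant $n$-plane spanned by $n+1$ points are the same argument in different clothing.
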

\begin{proof}
The first part of the theorem immediately follows from definition of
$\pg$ and Lemma~\ref{fulton}.

To prove the second part, let us consider a more general case. Let
$\V$ be an arbitrary algebraic variety in $\P^{r-1}$ and let
$I_2(\V)$ be the set of all quadratic polynomials in $x_1,\dots,x_r$
vanishing at $\V$. If we consider $I_2(\V)$ as a subspace in
$S^2(V_e)\subset \Hom(V_f,V_e)\subset\gl(V)$, then its $n$-th
prolongation can be identified with polynomials $F(x_1,\dots,x_r)$
of degree $n+2$ such that $\frac{\partial^n F}{\partial
x_1^{\alpha_1}\dots
\partial x_r^{\alpha_r}}\in I_2(\V)$ for all multi-indices $\alpha=(\alpha_1,\dots,\alpha_r)$
with $|\alpha|=n$. It is clear that any such polynomial and all its
partial derivatives of degree $\le n$ lie in $I(\V)$.

Let us prove that any such polynomial $F$ vanishes identically at
$\S_{n+1}(\V)$. It is sufficient to prove that it vanishes at any
secant $n$-plane of $\V$. Indeed, let $p_0,\dots,p_n\in \V$ be the
set of $n+1$ points and let
\[
W\colon \P^{n}\to \P^{r-1}, \quad [y_0:y_1:\cdots:y_n]\mapsto
y_0p_0+y_1p_1+\dots y_np_n;
\]
be the embedding of the corresponding secant $n$-plane. Then $F\circ
W$ is a polynomial of degree $n+2$ on $\P^n$ that vanishes at basis
points $q_i\in \P^n$, $i=0,\dots,n$, and, in addition, all its
derivatives of degree $\le n$ also satisfy this property. This
immediately implies that $F\circ W=0$. Hence, $F$ vanishes
identically at $\S_{n+1}(\V)$.
\end{proof}
\begin{cor}
\label{fincor} The subspace $\pg$, and, hence, the symbol $\s_{k,l}$
is of finite type.
\end{cor}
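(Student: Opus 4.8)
The plan is to deduce the vanishing of all sufficiently high prolongations of $\pg$ directly from the inclusion $\pg^{(n)}\subseteq I_{n+2}\bigl(\S_{n+1}(\T^{k-2}C)\bigr)$ established in the previous theorem, exploiting the elementary fact that the secant varieties of a non-degenerate variety eventually exhaust the ambient projective space. Once $\pg$ is shown to be of finite type, the corresponding statement for $\s_{k,l}$ follows from the identification of modified and standard prolongations proved above.

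First I would record that $\T^{k-2}C$ is non-degenerate in $\P^{r-1}$, i.e. is not contained in any hyperplane. This is immediate: the tangential developables form an increasing chain $C=\T^0C\subseteq\T^1C\subseteq\dots$, so that $C\subseteq\T^{k-2}C$ for every $k\ge2$, while the rational normal curve $C$ already spans $\P^{r-1}$ (any $r$ of its points are linearly independent, as the associated Vandermonde determinant does not vanish). Hence $\T^{k-2}C$ spans $\P^{r-1}$ as well.

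Next I would show that $\S_{n+1}(\T^{k-2}C)=\P^{r-1}$ as soon as $n+1\ge r$. Since $\T^{k-2}C$ spans $\P^{r-1}$, one can select $r$ points on it that are linearly independent and therefore span the whole space; the $(r-1)$-plane through these $r$ points is then all of $\P^{r-1}$, so $\S_r(\T^{k-2}C)=\P^{r-1}$ and in particular $\S_{n+1}(\T^{k-2}C)=\P^{r-1}$ for every $n+1\ge r$. Consequently $I_{n+2}\bigl(\S_{n+1}(\T^{k-2}C)\bigr)=0$ for such $n$, because no non-zero homogeneous polynomial vanishes identically on the entire projective space. By the inclusion from the preceding theorem this forces $\pg^{(n)}=0$ for all $n\ge r-1$, which is exactly the assertion that $\pg$ is of finite type.

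Finally, to pass from $\pg$ to the symbol $\s_{k,l}$, I would invoke the Corollary proved above which identifies the modified prolongations $(\s_{k,l})^{(im)}$ with the standard prolongations $\pg^{(i)}$ for every $i\ge1$. Since the latter vanish for $i\ge r-1$, the modified prolongations of $\s_{k,l}$ vanish as well, so $\s_{k,l}$ is of finite type. I do not anticipate any genuine obstacle at this stage: all of the real content has been front-loaded into the secant-variety description of the prolongations supplied by the previous theorem, and the only substantive ingredient remaining is the standard observation that the secant varieties of a non-degenerate variety fill $\P^{r-1}$.
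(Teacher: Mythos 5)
Your proof is correct and follows essentially the same route as the paper: both arguments rest on the fact that $\T^{k-2}C$ (containing the non-degenerate rational normal curve $C$) spans $\P^{r-1}$, so its secant varieties exhaust $\P^{r-1}$ once $n+1\ge r$, forcing $I_{n+2}(\S_{n+1}(\T^{k-2}C))=0$ and hence $\pg^{(n)}=0$; the transfer to $\s_{k,l}$ via the identification of modified with standard prolongations is likewise the intended step. Your write-up merely makes explicit a couple of elementary points (the Vandermonde argument and the chain $C\subseteq\T^{k-2}C$) that the paper leaves implicit.
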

\begin{proof}
Since the rational normal curve $C$ is non-degenerate, its $r$-th
secant variety coincides with $\P^{r-1}$. Hence,
$\S_{n+1}(\T^{k-2}C)=\P^{r-1}$ for any $n\ge r-1$. Then the ideal
$I(\V)$ is trivial, and $I_{n+2}(\V)=\{0\}$.
\end{proof}

Let us consider the case $k=2$ in more detail. We have a well-known
description of $n$-secant varieties of the rational normal
curve~\eqref{ratnorm}:
\begin{lem}[\cite{Harris}]\label{harris}
Let $\V=\S_n(C)$ be the $n$-secant variety of $C$, where $n\ge0$.
Then the ideal $I(\V)$ is generated (as an ideal) by $I_{n+2}(\V)$.
The space $I_{n+2}(\V)$, in its turn, is generated (as a linear
space) by all rank $n+2$ minors of the matrix:
\[
\begin{pmatrix}
x_1 & x_2 & x_3 & \dots & x_{r-\alpha} \\
x_2 & x_3 & x_4 & \dots & x_{r-\alpha+1} \\
\vdots \\
x_{\alpha+1} & x_{\alpha+2} & x_{\alpha+2} & \dots & x_r
\end{pmatrix},
\]
where $\alpha$ is an arbitrary integer between $n$ and $r/2$.
\end{lem}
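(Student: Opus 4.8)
The statement is a classical fact about rational normal curves and their secant varieties, and the plan is to reconstruct its proof through the catalecticant (Hankel) formalism. First I would fix the identification of $\R^r$ with the space of binary forms of degree $r-1$ in variables $(s,t)$, under which $x_1,\ldots,x_r$ are the coefficients and $C$ is the image of the Veronese map $[s:t]\mapsto[s^{r-1}:\cdots:t^{r-1}]$, i.e.\ the locus of perfect $(r-1)$-st powers. Writing $M_\alpha$ for the $(\alpha+1)\times(r-\alpha)$ matrix in the statement, whose $(i,j)$-entry is $x_{i+j-1}$, the first observation is that on $C$ one has $x_{i+j-1}=s^{\,r-i-j+1}t^{\,i+j-2}$, so $M_\alpha=uv^{\top}$ with $u_i=s^{\alpha+1-i}t^{i-1}$ and $v_j=s^{r-\alpha-j}t^{j-1}$; hence every $M_\alpha$ has rank $1$ along $C$, and conversely a rank $1$ Hankel matrix forces its entries into a geometric progression, which recovers a point of $C$. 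In particular $C=\{\operatorname{rk}M_\alpha\le 1\}$ for every admissible $\alpha$.

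Next I would identify $\S_n(C)$ set-theoretically with the determinantal locus $\{\operatorname{rk}M_\alpha\le n+1\}$. One inclusion is immediate: a general point of $\S_n(C)$ is a combination of $n+1$ points of $C$, so its matrix is a sum of $n+1$ rank one matrices and has rank $\le n+1$. For the reverse inclusion I would use the apolarity interpretation of $M_\alpha$ as the contraction map $S^{\alpha}\R^2\to S^{r-1-\alpha}\R^2$ determined by the form $F$ represented by $(x_1,\ldots,x_r)$: the rank of this contraction is at most $n+1$ exactly when $F$ is apolar to a binary form with at most $n+1$ distinct roots, i.e.\ when $F$ lies in the linear span of $n+1$ powers of linear forms, which is the condition defining $\S_n(C)$. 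This simultaneously shows the locus is independent of the choice of $\alpha$ in the prescribed range $n\le\alpha\le r/2$, which is exactly the range keeping $M_\alpha$ large enough in both dimensions to detect rank $n+1$.

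The substantive point is the ideal-theoretic statement, and here I would invoke the theory of $1$-generic matrices. The catalecticant matrix $M_\alpha$ is $1$-generic, since its generalized rows are the contractions of $F$ against a basis of $S^{\alpha}\R^2$ and are linearly independent as linear forms, so no row/column operations make a generalized entry vanish identically. By the classical theory of such matrices (equivalently, by the known minimal free resolution of secant varieties of rational normal curves), the ideal generated by the $(n+2)\times(n+2)$ minors is prime, has the expected codimension, and equals the full homogeneous ideal of $\{\operatorname{rk}M_\alpha\le n+1\}=\S_n(C)$; moreover the secant variety is arithmetically Cohen--Macaulay with an Eagon--Northcott type resolution, so its ideal is generated in the single degree $n+2$. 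This yields both that $I(\V)$ is generated by $I_{n+2}(\V)$ and that $I_{n+2}(\V)$ is spanned by the displayed minors. Finally I would confirm that this span is independent of the admissible $\alpha$ by $SL(2,\R)$-equivariance: the degree $n+2$ piece of $\R[x_1,\dots,x_r]$ decomposes into irreducible $\sll(2,\R)$-modules, the subspace $I_{n+2}(\V)$ is a subrepresentation, and the $(n+2)$-minors of a single $M_\alpha$ are carried into one another by the torus and the raising/lowering operators, hence already span an $\sll(2,\R)$-submodule of the correct dimension; a highest-weight count in the spirit of Lemma~\ref{fulton} shows it is $I_{n+2}(\V)$ for every $\alpha$.

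The main obstacle is precisely the primality and single-degree generation in the third step. For a matrix with algebraically independent entries this is standard determinantal-ideal theory, but the Hankel matrix has many repeated entries, and moreover the relevant minors are in general sub-maximal; one therefore cannot quote the generic determinantal result and must genuinely use $1$-genericity (equivalently apolarity together with the explicit resolution of secant ideals of the rational normal curve). Everything else reduces to the rank-one factorization, the apolarity rank count, and a representation-theoretic bookkeeping that is routine once this step is in place.
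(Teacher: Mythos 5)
The paper does not actually prove this lemma: it is imported from Harris's textbook, as the bracketed citation indicates, and Section~5 uses only the statement. Your reconstruction follows the standard catalecticant route, and the parts you genuinely argue are sound: the rank-one factorization of the Hankel matrix $M_\alpha$ along $C$, the subadditivity of rank giving one inclusion of $\S_n(C)$ into the determinantal locus $\{\operatorname{rk} M_\alpha\le n+1\}$, and the converse via binary apolarity (Sylvester's theorem, with the Zariski closure in the definition of the secant variety absorbing the case of apolar forms with repeated roots) are all correct, as is the verification that $M_\alpha$ is $1$-generic. However, the step you yourself identify as the main obstacle --- primality of the ideal of $(n+2)\times(n+2)$ minors and generation of $I(\V)$ in the single degree $n+2$ --- is not proved but outsourced to ``the known minimal free resolution of secant varieties of rational normal curves,'' which is essentially the content of the lemma itself; and, as you correctly observe, Eisenbud's theorem on $1$-generic matrices gives primality only for \emph{maximal} minors, so it does not directly cover the sub-maximal minors needed here. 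Your proposal is therefore best read as a correct and well-organized reduction of the lemma to the same classical circle of results the paper cites, rather than an independent proof --- which is an acceptable treatment of a quoted textbook fact, and no worse than what the paper does.

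One caution on conventions. With the paper's own definition ($\S_n(\V)$ is the closure of the union of spans of $n$ points of $\V$), the ideal of $\S_n(C)$ would be generated by $(n+1)\times(n+1)$ minors. The ``rank $n+2$ minors'' in the statement, and your argument, tacitly use the convention that $\S_n(C)$ is the union of secant $n$-planes, i.e.\ spans of $n+1$ points; moreover, for $(n+2)\times(n+2)$ minors of $M_\alpha$ to exist one needs $\alpha\ge n+1$ rather than $\alpha\ge n$. Your reading is the one under which the statement is true and matches Harris, but the mismatch with the paper's stated definition of $\S_n$ is worth flagging explicitly.
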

In particular, we see that in case $k=2$ the space $\pg^{(n)}$,
$n\ge0$ coincides with the ideal $I_{n+2}(\S_n C)$ and is described
explicitly by Lemma~\ref{harris}.

\subsection{The symbol in case of rectangular diagrams}
In this subsection we describe the symbol $\s_{k,0}$ for
$W$-structure $P_{k,0}$, constructed in case of rectangular
diagrams. Let $r=2k-1$. We assume that $k\ge 3$, so that $r\ge 5$.
The case $k=2$ corresponds to non-degenerate $(3,6)$ distributions,
and has been studied by Bryant~\cite{Bryant}.

Let $U$ be an irreducible $\sll(2,\R)$-module of dimension $r$.
Define a Lie algebra $\ag$ as a direct product $\sll(2,\R)\times
\gl(2,\R)$ and define a natural action of $\ag$ on the tensor
product $V=U\otimes\R^2$. We shall identify $\ag$ with the
subalgebra in $\gl(V)$ defined by the action of $\ag$ on $V$.

There is a unique (up to a constant) $(\sll(2,\R)\times
\sll(2,\R))$-invariant symplectic form $\sigma$ on $V$, which can be
defined as a product of a (unique up to a constant)
$\sll(2,\R)$-invariant non-degenerate symmetric form on $U$ and the
standard skew-symmetric form on $\R^2$ preserved by the second
$\sll(2,\R)$.

\begin{thm}
\label{recthm} The symbol $\s_{k,0}$ for rectangular diagrams is
equivalent to the subalgebra $\ag\subset\gl(V)$, which in turn is
equal to the algebra of infinitesimal symmetries of the flat curve
$\mathfrak F_{k,0}$.

The first modified prolongation $(\s_{k,0})^{(1m)}$ is trivial for
$k\geq 2$.
\end{thm}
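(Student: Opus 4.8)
The plan is to follow the same two-step strategy as in the proof of Theorem~\ref{nonrecthm} for the non-rectangular case: first identify the symbol with the algebra of infinitesimal symmetries of the flat curve $\mathfrak F_{k,0}$, and only then compute the modified prolongation directly from equation~\eqref{genprol}. For the first statement I would start from the observation that each of the generators $X,H,Y$ of $\sll(2,\R)$ and the four generators of $\gl(2,\R)$ preserves $\sigma$ up to a scalar, so $\ag\subset\mathfrak{csp}(V)$. Writing $\ag=\ag_0\oplus\R X$, where $\ag_0$ stabilizes the flag~\eqref{flag}, one has $[X,\ag_0]\subset\ag=\ag_0+\R X$ and $[X,\R X]=0$; hence by the characterization of \cite{doukom} the subalgebra $\ag_0$ lies in $\sym_0(\mathfrak F_{k,0})$, and therefore $\ag\subset\sym(\mathfrak F_{k,0})$. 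For the reverse inclusion I would bound $\dim\sym_0(\mathfrak F_{k,0})$ by the dimension of the space of admissible frames at one point, exactly as in the non-rectangular argument; since the bundle $P_{k,0}$ built in Section~\ref{s3} is a genuine principal $\mathrm{SL}(2,\R)\times\mathrm{GL}(2,\R)$-bundle, this dimension is $7=\dim\ag$, forcing $\sym(\mathfrak F_{k,0})=\ag$. Because the $W$-structure attached to the flat curve is then the reduction to $\Sym_0(\mathfrak F_{k,0})$, its symbol is $\gr\ag\cong\ag$ under the identification of $V$ with $\gr V$, which yields both claims of the first sentence.

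The substantive part is the vanishing of $(\s_{k,0})^{(1m)}$. Let $(\phi,v)$, with $\phi\colon V\to\ag$ and $v\in V$, satisfy \eqref{genprol}. The structural input is the Lagrangian splitting $V=V_e\oplus V_f$, $V_e=U\otimes\varepsilon_1$, $V_f=U\otimes\varepsilon_2$, both isotropic for $\sigma$. Decomposing $\phi=\phi_{\sll}+\phi_{\gl}$ according to $\ag=\sll(2,\R)\oplus\gl(2,\R)$ and restricting \eqref{genprol} to $v_1,v_2\in V_e$ (where $\sigma$ vanishes) gives $\phi(v_1)v_2=\phi(v_2)v_1$. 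Projecting this onto $V_e$ and identifying $V_e\cong U$, the map $u\mapsto\phi_{\sll}(u)+a(u)\,\mathrm{Id}_U$, where $a(u)$ is the $(\varepsilon_1,\varepsilon_1)$-entry of $\phi_{\gl}(u)$, lands in the irreducibly embedded $\gl(2,\R)\subset\gl(U)$ and satisfies the standard first-prolongation equation on $U$. By the Kobayashi--Nagano classification~\cite{kobnag} the first prolongation of this irreducible $\gl(2,\R)$ is zero once $\dim U=r=2k-1\ge4$; comparing traces (elements of $\sll(2,\R)$ act by trace-free operators on $U$) then forces $\phi_{\sll}|_{V_e}=0$ and $a\equiv0$. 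The symmetric computation on $V_f$ kills $\phi_{\sll}|_{V_f}$ and the $(\varepsilon_2,\varepsilon_2)$-entry of $\phi_{\gl}$.

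It remains to treat the mixed equation with $v_1\in V_e$, $v_2\in V_f$, where $\sigma(v_1,v_2)$ is generically nonzero; this is precisely where $v$ and the surviving off-diagonal entries of $\phi_{\gl}$ are pinned down. Projecting \eqref{genprol} onto $V_e$ and onto $V_f$ separately and inserting the vanishing already obtained, I would show that the remaining term $\sigma(v_1,v_2)\,v$ defines a map of rank at most one in the pair $(v_1,v_2)$; since $\dim U\ge4$ lets one choose $v_1,v_2$ with $\sigma(v_1,v_2)=0$ but with prescribed projections, this forces first $v=0$ and then $\phi_{\gl}=0$. Hence $\phi=0$ and $v=0$, so $(\s_{k,0})^{(1m)}=0$.

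I expect the main obstacle to be exactly this last bookkeeping of the $\gl(2,\R)$-components: unlike in Theorem~\ref{nonrecthm}, here $\ag$ does \emph{not} preserve the two Lagrangian subspaces $V_e,V_f$, so the off-diagonal part of $\phi_{\gl}$ genuinely mixes them, and one must be careful about which projections of \eqref{genprol} are actually available at each step. One could instead bypass part of this by invoking Lemma~\ref{Tanmodlem} to identify $(\s_{k,0})^{(1m)}$ with the Tanaka piece $\g_1$, but the direct route above is cleaner and makes transparent the role of the dimension threshold $\dim U\ge4$. Finally, this threshold is sharp and explains the range: for $k=2$ one has $\dim U=3$, the irreducible $\gl(2,\R)$ does admit a first prolongation, and the prolongation continues to the exceptional algebra $\mathfrak{so}(4,3)$ of Bryant~\cite{Bryant}, so the genuinely new vanishing occurs for $k\ge3$.
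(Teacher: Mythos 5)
Your first paragraph follows the paper's own route (the paper simply says the first assertion is proved as in Theorem~\ref{nonrecthm}), but for the vanishing of $(\s_{k,0})^{(1m)}$ you take a genuinely different path. The paper never uses the Lagrangian splitting $V=V_e\oplus V_f$: it decomposes $V=\oplus_{i=1}^r V_i$ into the two-dimensional weight spaces $V_i=\R e_i\otimes\R^2$, on which all of $\ag$ acts tridiagonally ($\ag.V_i\subset V_{i-1}+V_i+V_{i+1}$) and $\sigma$ pairs $V_i$ only with $V_{r+1-i}$. Testing \eqref{genprol} on the isotropic pair $(V_1,V_{r-1})$ forces $\phi(V_1)$ (and likewise $\phi(V_r)$) to preserve every $V_i$; then the pair $(V_1,V_r)$ gives $v\in V_1+V_r$, while a pair in $V_k\times V_k$ gives $v\in V_{k-1}+V_k+V_{k+1}$, whence $v=0$ for $k\geq 3$; only after the modified prolongation is thus reduced to the ordinary one is Kobayashi--Nagano invoked, for the full irreducible $\ag$ on $V$. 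You instead front-load Kobayashi--Nagano, applying it to the restricted irreducible $\gl(2,\R)\subset\gl(U)$ on each Lagrangian piece, and finish $v$ and the off-diagonal entries by hand; this is workable and makes the threshold $\dim U\geq 4$ transparent, but there is one bookkeeping point your sketch genuinely misses. The $(\varepsilon_2,\varepsilon_1)$-entry of $\phi_{\gl}$ on $V_e$-arguments and the $(\varepsilon_1,\varepsilon_2)$-entry on $V_f$-arguments do \emph{not} occur in the mixed equation at all (there $\phi(v_1)$ hits $\varepsilon_2$ and $\phi(v_2)$ hits $\varepsilon_1$), so they cannot be ``pinned down'' in your last step; they must be killed by the $V_f$-projection of the $V_e\times V_e$ equation, which reads $c(v_1)u_2=c(v_2)u_1$ and vanishes since $\dim U\geq 2$, together with its mirror. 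Also the order in your final step should be reversed: the isotropic choices of $(v_1,v_2)$ kill the remaining entries of $\phi_{\gl}$ first, and only then does a non-isotropic pair force $v=0$. With these repairs your argument is complete. Your closing observation is a correct catch: both proofs need $r=2k-1\geq 5$, and indeed $(\s_{2,0})^{(1m)}\neq 0$ since $\mathfrak G_{2,0}=\so(4,3)$, so the ``$k\geq 2$'' in the statement should read ``$k\geq 3$''. The paper's weight-space argument avoids all of the component bookkeeping precisely because, as you note yourself, $\ag$ does not preserve $V_e$ and $V_f$.
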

\begin{proof}

The first part of the theorem can be proved in the same way as
Theorem  \ref{nonrecthm}.

Let us prove the second part. Denote by $\{e_1, e_2, \dots, e_r\}$
the standard basis of the $\sll(2,\R)$-module $U$ and decompose $V$
as $\oplus_{i=1}^r V_i$, where $V_i=\R e_i\otimes\R^2$. Then we have
$\ag.V_i\subset V_{i-1}+V_i+V_{i+1}$ for $i=2,3,r-1$ and
$\ag.V_1\subset V_1+V_2$, $\ag.V_r\subset V_{r-1}+V_r$. Note also
that $(e_i,e_j)=0$ for any $i+j\ne 6$. In particular, we get
$\sigma(V_i, V_j)=0$ for any $i+j\ne 6$.

Let $(\phi,v)$ be the element of the modified prolongation of $\ag$.
Consider equation~\eqref{genprol} for $v_1\in V_1$ and $v_2\in
V_{r-1}$. As $\sigma(v_1,v_2)=0$, we get
$\phi(v_1)v_2=\phi(v_2)v_1$. Since $\phi(v_1)v_2$ lies in
$\ag.V_{r-1}\subset V_{r-2}+V_{r-1}+V_r$ and $\phi(v_2)v_1$ lies in
$V_1+V_2$, we see that both parts of this equality should vanish.
This is possible only if $\phi(v_1)$ lies in $\R H + \gl(2,\R)$,
and, in particular $\phi(V_1)V_i\subset V_i$ for all $i=1,\dots,r$.
Similarly, we can prove that $\phi(V_r)V_i\subset V_i$ for
$i=1,\dots,r$.

Consider now equation~\eqref{genprol} for $v_1\in V_1$ and $v_2\in
V_r$. We get $v\in \phi(V_1).V_r+\phi(V_r).V_1\subset V_1+V_r$. On
the other hand, if we consider $v_1,v_2\in V_{k}$ (recall that
$r=2k-1$), we see that $v\in \ag.V_k\subset V_{k-1}+V_k+V_{k+1}$. As
$k\ge 3$, we get $v=0$ and the modified prolongation $\ag^{(1m)}$
coincides with the standard prolongation $\ag^{(1)}$. But according
to Kobayashi--Nagano~\cite{kobnag}, $\ag^{(1)}$ is trivial for
$r\ge5$.
\end{proof}

Finally note that the Main Theorem, formulated in the Introduction,
is obtained by combining  Theorem \ref{prolongthm}, Lemma
\ref{Tanmodlem}, Theorem \ref{nonrecthm}, Corollary \ref{fincor},
and Theorem \ref{recthm}.

 \end{document}